\newcommand{\fun}[3]{#1\colon #2\rightarrow #3}
\newcommand{\set}[2]{\{#1:\, #2\}}
\newcommand{\psr}[2]{#1[[#2]]}
\newcommand{\abs}[1]{\lvert#1\rvert}
\newcommand{\absb}[1]{\bigl\lvert#1\bigr\rvert}
\newcommand{\bb}[1]{\mathbb{#1}}
\newcommand{\bff}[1]{\mathbf{#1}}
\newcommand{\dotsk}[0]{,\dots,}
\newcommand{\field}[0]{k}
\newcommand{\f}[0]{k}                     %Kropp
\newcommand{\bbN}[0]{\mathbb{N}}
\newcommand{\bbFp}[0]{\mathbb{F}_p}
\newcommand{\bbFq}[0]{\mathbb{F}_q}
\newcommand{\bbZp}[0]{\mathbb{Z}_p}
\newcommand{\bbQp}[0]{\mathbb{Q}_p}
\newcommand{\bbF}[0]{\mathbb{F}}
\newcommand{\bbL}[0]{\mathbb{L}}
\newcommand{\bfW}[0]{\mathbf{\W}}
\newcommand{\bbZ}[0]{\mathbb{Z}}
\newcommand{\bbA}[0]{\mathbb{A}}
\newcommand{\bbQ}[0]{\mathbb{Q}}
\newcommand{\DVR}[0]{\mathcal{O}}          %DVR
\newcommand{\SSDVR}[0]{\mathcal{X}}          %varite över spektra av DVR
\newcommand{\W}[0]{\mathbf{W}}          %Wittvektorerna
\newcommand{\catring}[0]{\mathsf{Rings}}                %K   "        ringar
\newcommand{\catvar}[0]{\mathsf{Var}}                %K   "        variteer
\DeclareMathOperator{\KO}{K_0}
\DeclareMathOperator{\ord}{ord}
\DeclareMathOperator{\spec}{Spec}
\DeclareMathOperator{\R}{r}                             %Wittoperationer:
\DeclareMathOperator{\V}{V}                             %Vershiebung
\DeclareMathOperator{\F}{F}                             %Frobenius
\newcommand{\muH}{\mu_{Haar}}
\DeclareMathOperator{\symdiff}{\Delta}
\newcommand{\KOR}{\KO(\catvar_{\f})}
\newcommand{\KOLunmarked}{\mathcal{M}}     
\newcommand{\KOL}{\mathcal{M}_{\f}}                                  % \KO(\catvar_{\f})[\bbL^{-1}]
\newcommand{\KOLp}{\mathcal{M}_{\bbFp}}  
\newcommand{\KOW}{\overline{\KO}(\catvar_{\f})}
\newcommand{\KOWp}{\overline{\KO}(\catvar_{\bbFp})}
\newcommand{\KOWq}{\overline{\KO}(\catvar_{\bbFq})}
\newcommand{\KOC}{\widehat{\KOL}}
\DeclareMathOperator{\counting}{C}
\DeclareMathOperator{\filtration}{F}
\numberwithin{equation}{section}
\newtheorem{theorem}[equation]{Theorem}
\newtheorem{lemma}[equation]{Lemma}
\newtheorem{corollary}[equation]{Corollary}
\newtheorem{proposition}[equation]{Proposition}
\newtheorem{example}[equation]{Example}
\newtheorem{remark}[equation]{Remark}
\newtheorem{definition-lemma}[equation]{Definition-Lemma}
\theoremstyle{remark}
\newtheorem*{acknowledgements}{Acknowledgment}
\begin{document}
\title[Computing $p$-adic integrals using motivic integration]{Computing $p$-adic integrals using motivic integration}
\author{Karl R\"okaeus}
\address{Karl R\"okaeus \\ Department of Mathematics \\ Stockholm University \\ SE-106 91 Stockholm \\ Sweden}
\email{karlr@math.su.se}
\date{October 6, 2008}
%\subjclass[2000]{Primary ; Secondary }
\begin{abstract}
We use the theory of motivic integration in order to give a geometric explanation of the behavior of some $p$-adic integrals.
\end{abstract}
\maketitle

\section{Introduction}
In \cite{Skoglund2005} the author computes the integral $I_p=\int_{\bbZp^n}\abs{\prod_{1\leq i<j\leq n}(X_i-X_j)}_pd\mu_{Haar}$. The method is recursive and finds a rational function $f$ such that $I_p=f(p)$ (with respect to the normalized absolute value).
Looking at these computations it is immediate that instead of integrating over $\bbZp^n$ we may as well integrate over $\W(\bbFq)^n$, where $\W$ is the Witt vectors and $q$ is any power of $p$, to obtain the value of $I_q=\int_{\W(\bbFq)^n}\abs{\prod_{1\leq i<j\leq n}(X_i-X_j)}_pd\mu_{Haar}$ for different powers  $q$ of the fixed prime $p$. It turns out that for every $q$, $I_q=f(q)$. Here $f$ is the same rational function as above, it is hence independent of $q$.

This kind of behavior is unusual, one would expect that $f$ should vary with $q$ (\emph{cf.} Example \ref{MC9}). When it occurs one could suspect that there is some geometric explanation; the aim of this paper is to give such an explanation.

%This kind of behavior does certainly not hold for the integral of the absolute value of an arbitrary polynomial. For example, if $p\equiv 3\pmod 4$ then $\int_{\bbZp}\abs{X^2+1}d\mu_{Haar}=1$ whereas $\int_{\W(\bbF_{p^2})}\abs{X^2+1}d\mu_{Haar}=1-2/(p^2+1)$. We want to give a motivic explanation of this phenomenon.

For this we use a version of geometric motivic integration, developed in \cite{MF}. This theory is constructed in almost exactly the same way as the standard theory of geometric motivic integration (see \cite{MR1886763} or \cite{MR1905024}, and \cite{MR2075915} for the case of equal and mixed characteristic respectively). The main difference is that when the standard motivic measure takes values in $\KOC$, i.e., $\KOL=\KOR[\bbL^{-1}]$ completed with respect to the dimension filtration, we complete $\KOL$ with respect to a stronger topology. This allows us to specialize our motivic integral to the corresponding $p$-adic one, by counting $\bbFp$-points on the motivic integral. On the contrary, the point counting homomorphism is not continuous with respect to the dimension filtration, hence does not extend to $\KOC$.

For this reason we defined, in \cite{MF}, a topology on $\KOL$ for which the point counting homomorphism is continuous in case $\f=\bbFp$. Let $\KOW$ be $\KOL$ completed with respect to this topology. Let, for every power $q$ of $p$, $\counting_q$ be the point counting homomorphism $\KOLp\to\bbQ$, induced by $X\mapsto\abs{X(\bbFq)}$. It extends by continuity to a homomorphism $\counting_q\colon\KOWp\to\bb{R}$. Now, when $\SSDVR$ is a variety over a complete discrete valuation ring, we may define a measure $\mu_{\SSDVR}$ on certain subsets of its arc space, $\SSDVR_\infty$. This measure takes values in $\KOW$, where $\f$ is the residue field of the discrete valuation ring. (Instead of taking values in $\KOC$, which is the standard choice.)  In case $\SSDVR=\bbA_{\bbZp}^n$ we will have $\SSDVR_\infty=\W(\overline{\bb{F}}_p)^n$, and $\counting_q\mu_{\SSDVR}(A)=\mu_{Haar}(A\cap\W(\bbFq)^n)$ for every measurable subset $A\subset\SSDVR_\infty$. 

Using this theory of motivic integration, we prove that if $\SSDVR=\bbA_{\bbZp}^n$ then 
$$I=\int_{\SSDVR_\infty}\abs{\prod_{1\leq i<j\leq n}(X_i-X_j)}_pd\mu_{\SSDVR}\in\KOWp$$
is a rational function in $\bbL=[\bbA_{\bbFp}]$ with integer coefficients, more precisely $I=f(\bbL)$ where $f$ is the same as above (this follows from Theorem \ref{mainMC}). By applying the point counting homomorphism we recover the original integrals, $\counting_qI=I_q$ for every power $q$ of $p$, showing that $I_q=f(q)$.

% To go back to our other example, we will see that if $p\equiv 3\pmod 4$ then $\int_{\SSDVR^1_\infty}\abs{X^2+1}d\mu_{\SSDVR^1}=1-[\spec\bbF_{p^2}]/(\bbL+1)\in\KOWp$. (Example \ref{MC9}.)

Here is an overview of the paper: In Section \ref{25} we give a quick review of the theory from \cite{MF} in the case of interest to us in this paper, namely when $\SSDVR$ is an affine space. We then prove some general result about our motivic integral in this case, in particular that all the integrals that we are interested in exist. %If one is not interested in this background, it should be possible to just use the results of this section as a black box.

In Section \ref{24} we compute the integral of the absolute value of a polynomial in one variable. This case is easier than the case of many variables, and it is possibly to compute a satisfactory explicit value of the integral for any polynomial whose image over the residue field is separable.

In Section \ref{MC8} we give three fundamental results, about change of variables and separation of variables. We need these in the last section, to perform the recursions which are the ultimate goal of the paper.

In Section \ref{MC3} we do the work that allow us to compute the integral $I$ defined above. In fact, we compute more general integrals; we show that for $p$ sufficiently large, the motivic integral of the absolute value of any product of linear forms is a rational function in $\bbL$, with coefficients in $\bbZ$. Then when $p$ is arbitrary we compute the motivic integral of the absolute value of a product of more special linear forms, in particular the integral $I$. These computations also make it possible to give an explicit formula for the integral. (If one is only interested in the $p$-adic integrals, it is possible to translate these computations to that setting, with no reference to motivic integration.) We will also see that all these computations work also in the equal characteristic case.

This paper includes a fair amount of computation in the Witt vectors, we therefore include an appendix on them.
%We do this in a more general case, computing the integral of the absolute value of any polynomial that is a product of linear forms, in case  $p$ is sufficiently large. When the polynimal is a product of linear forms with only two non-zero coefficients, we can do the computation for any $p$.
\begin{acknowledgements}
The author is grateful to Professor Torsten Ekedahl for various suggestions and  comments on the paper.
\end{acknowledgements}

\section{Background and general results}\label{25}

In \cite{MF}, we developed a version of motivic integration, valid over any variety $\SSDVR$ defined over a complete discrete valuation ring, $\DVR$. In this paper we will use this theory in the case when $\SSDVR$ is an affine space, $\SSDVR=\bbA_{\DVR}^d$. Actually, what interests us the most is when $\DVR=\bbZp$, for then the computations of certain $p$-adic integrals can be done in the motivic setting, and then be obtained by applying $\counting_p$ to the result.

The present section is divided into two parts: In the first, we give a brief review, without proofs, of the theory from \cite{MF} in the special case when $\SSDVR=\bbA_{\DVR}^d$. Then in the second part we prove that all the integrals we are interested actually exist, and prove some basic facts about them.

\subsection{Basic definitions}\label{MC-103}

We write $\KOL$ to denote $\KOR[\bbL^{-1}]$, the Grothendieck ring of $\f$-varieties with the class of the affine line inverted. Our motivic integral takes values in a certain completion of $\KOL$, which we denote $\KOW$ (here, $\f$ is the residue field of $\DVR$). For the exact definition of the topology which we are using we refer to \cite{MF}, here we briefly describe some of its basic properties: It is stronger than the topology coming from the dimension filtration. In fact $\KOW$ can be embedded in $\KOC$, however, its topology is not the subspace topology. We do not have the ultra metric properties of $\KOC$. Instead the point counting homomorphisms $\counting_q\colon\KOWq\to\bb{R}$, induced by $X\mapsto\abs{X(\bbFq)}$, are well-defined and continuous. In this paper the questions of convergence will mainly boil down to the following result, which is Example 2.14 in \cite{MF}:

\begin{proposition}\label{MC-34}
Let $\f$ be any field. If $\{e_i\}_{i\in\bbN}$ is a sequence of integers such that $e_i\to\infty$, then $\sum_{i\in\bbN}\bbL^{-e_i}$ is convergent in $\KOW$. 
\end{proposition}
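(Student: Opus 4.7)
The plan is to show that the partial sums $S_N := \sum_{i=0}^{N-1}\bbL^{-e_i}$ form a Cauchy sequence in $\KOW$; since this ring is by construction the completion of $\KOL$ with respect to the topology from \cite{MF}, Cauchy-ness is enough to produce a limit.

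First I would reduce the problem to controlling the tails
$$T_{N,M} \;:=\; S_M - S_N \;=\; \sum_{i=N}^{M-1}\bbL^{-e_i}.$$
Setting $E_N := \min\{e_i : i\geq N\}$, the hypothesis $e_i\to\infty$ forces $E_N\to\infty$ as $N\to\infty$, so every summand in $T_{N,M}$ has the form $\bbL^{-e}$ with $e\geq E_N$. Grouping indices sharing the same exponent, one rewrites
$$T_{N,M} \;=\; \sum_{e\geq E_N} b_e^{(N,M)}\,\bbL^{-e},$$
where each multiplicity $b_e^{(N,M)}$ is a nonnegative integer bounded uniformly by $a_e := \#\{i\in\bbN : e_i=e\}$, which is finite by the same hypothesis.

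Next I would unfold the definition of the topology on $\KOL$ given in \cite{MF} and establish the following key smallness statement: for every basic neighborhood $U$ of $0$ there exists an integer $E(U)$ such that every finite nonnegative integer combination $\sum b_e\bbL^{-e}$ with $e\geq E(U)$ lies in $U$. Given such a statement, for any prescribed $U$ we choose $N$ so large that $E_N\geq E(U)$; then $T_{N,M}\in U$ for all $M\geq N$, which is exactly the Cauchy property, and the completeness of $\KOW$ delivers the convergence of $\sum_{i\in\bbN}\bbL^{-e_i}$.

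The main obstacle is precisely this uniformity: it is not enough that the individual powers $\bbL^{-e}$ tend to zero (this is already true in the coarser dimension filtration topology on $\KOC$); what must be verified is that arbitrary finite nonnegative integer combinations of such powers with exponents bounded below by $E$ remain uniformly small once $E$ is large. This is exactly the feature distinguishing the topology on $\KOW$ from the dimension filtration, and the argument would rely on the explicit construction of basic neighborhoods in \cite{MF}, showing that the seminorms defining them are compatible with formal $\bbL^{-1}$-power series expansions. Once this is in hand, there is nothing further to do.
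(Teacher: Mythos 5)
The paper itself does not prove this statement: it is quoted verbatim from \cite{MF} (Example 2.14 there), so the only ``proof'' in this paper is a citation, and your attempt must stand on its own reconstruction of the topology. There it has a genuine gap. Your whole argument funnels into the ``key smallness statement'': for every basic neighborhood $U$ of $0$ there is an $E(U)$ such that \emph{every} finite nonnegative integer combination $\sum_e b_e\bbL^{-e}$ with all exponents $e\geq E(U)$ lies in $U$. You do not prove this, you only say it ``would rely on the explicit construction of basic neighborhoods in \cite{MF}''; but in fact, as stated it cannot be true. If it were, your own Cauchy argument applied to the tails $\sum_{n=N}^{M-1}3^n\bbL^{-n}$ (a nonnegative combination with exponents $\geq N$, multiplicities $b_n=3^n$) would show that $\sum_{n\geq0}3^n\bbL^{-n}$ is Cauchy, hence convergent in $\KOW$ --- and the paper states explicitly, immediately after the proposition, that this series is \emph{not} convergent (equivalently, $1-3\bbL^{-1}$ is not invertible). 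This is precisely the feature that distinguishes $\KOW$ from the completion with respect to the dimension filtration: smallness of an effective class is not determined by its exponents alone, the multiplicities (equivalently, the point counts $\counting_q$) must be controlled as well.

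Your reduction does record the bound $b_e^{(N,M)}\leq a_e:=\#\{i: e_i=e\}$, but you then discard it, and in any case the hypothesis $e_i\to\infty$ only makes each $a_e$ finite without any growth control, so an argument using nothing beyond ``exponents are large and coefficients are nonnegative integers'' necessarily proves too much. A correct proof has to open up the actual definition of the neighborhoods in $\KOL$ from \cite{MF} and verify the uniformity for the specific combinations that occur (this is the entire content of the cited Example 2.14); the reduction to the Cauchy property of partial sums is the routine part, and everything of substance is in the step you left unverified.
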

It follows for example that $1-\bbL^{-1}$ is invertible. On the other hand, $\sum_{n\geq 0}3^n\bbL^{-n}$ is not convergent; $1-3\bbL^{-1}$ is not invertible (contrary to with respect to the dimension filtration).
%\begin{example}
%On the other hand, note that for example $1-3\bbL^{-1}$ is not invertible (contrary to with respect to the dimension filtration). For its formal invers is $x=\sum_{n\geq 0}3^n\bbL^{-n}$, and $\weight_{2n}(x)=3^{-n}$, showing that $x$ is not of uniform polynomial growth. This would also follow from the fact that $\counting_2x$ is not convergent (note however that $\counting_px$ is convergent for $p>2$).
%\end{example}

We have a partial ordering on $\KOW$, defined on $\KOR$ as $x\leq y$ if $x+[X]=y$ for some scheme $X$, which we may often use to reduce questions about convergence to Proposition \ref{MC-34}.

The construction of the motivic measure theory in \cite{MF} is the standard one, used in the appendix of \cite{MR1905024}, in \cite{MR1886763} and in \cite{MR2075915}. We will not go through these definitions here, we just tell what the results are in the case of interest in this paper.

From now on, we assume that $\SSDVR=\bbA_{\DVR}^d$ where $\DVR$ is a complete discrete valuation ring with residue field $\f$, of one of the following types:
\begin{itemize}
\item $\f$ is a perfect field, of prime characteristic $p$, and $\DVR=\W(\f)$, where $\W$ is the ring scheme of Witt vectors constructed with respect to the prime $p$. 
\item $\f$ is a field, and $\DVR=\psr{\f}{t}$.
\end{itemize}
(We are mainly interested in the cases when $\DVR$ is either $\bbZp$ or $\psr{\bbQ}{t}$ so that $\f=\bbFp$ and $\bbQ$ respectively.)  Then the space of arcs $\SSDVR_\infty$ of $\SSDVR=\bbA^d_{\DVR}$ is the set
$$\SSDVR_\infty=\begin{cases}\W(\overline{\f})^d, &  \DVR=\W(\f)\\ \psr{\overline{\f}}{t}^d, & \DVR=\psr{\f}{t} \end{cases},$$
where $\overline{\f}$ is an algebraic closure of $\f$. We use the standard terminology in connection with the Witt vectors, see the appendix. Moreover, in order to get a uniform notation, we will write $(x_i)_{i\geq0}$ for $\sum_{i\geq0}x_it^i\in \psr{\overline{\f}}{t}$.

For every $n$, if $\DVR=\W(\f)$ then $\SSDVR_n$ is the scheme whose $R$-points is $\W_n^d(R)$ for every  $\f$-algebra $R$, where $\W_n$ is the Witt vectors of length $n$. If instead $\DVR=\psr{\f}{t}$ then $\SSDVR_n(R)=(R[t]/(t^n))^d$ for every $\f$-algebra $R$. We use $\pi_n$ to denote the projection $\SSDVR_\infty\to\SSDVR_n(\overline{\f})$.

The definitions of stable and measurable subsets of $\SSDVR$ are the standard ones. Moreover, the measure of a subset $A\subset\SSDVR_\infty$ which is stable of level $n$ is $\mu_{\SSDVR}(A):=[\pi_n(A)]\bbL^{-nd}\in\KOW$. The measure of a general measureable set, and of a measurable function, is defined in the standard way. The following (Propositions $3.9$ and $3.11$ in \cite{MF}) are the properties that we are after:
\begin{proposition}\label{MC39}
Let $\SSDVR=\bbA_{\bbZp}^d$. When $A\subset\SSDVR_\infty$ is measurable we have, for every power $q$ of $p$, $\mu_{Haar}(A\cap\W(\bbFq)^d)=\counting_q(\mu_{\SSDVR}(A))$. Moreover, if $f\colon\SSDVR_\infty\to\KOW$ is measurable, then $\counting_q\int_{\SSDVR_\infty}fd\mu_{\SSDVR}=\int_{\W(\bbFq)^d}\counting_q\circ fd\mu_{Haar}$.
\end{proposition}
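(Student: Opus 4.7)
The plan is to handle this in three stages: (a) verify the identity on stable sets directly from the definitions, (b) pass to general measurable sets by approximation together with the continuity of $\counting_q$ established in the discussion before Proposition \ref{MC-34}, and (c) bootstrap from measurable sets to measurable functions using the standard layer-cake decomposition of the motivic integral.

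First, suppose $A\subset\SSDVR_\infty$ is stable of level $n$. Then by definition $\mu_{\SSDVR}(A)=[\pi_n(A)]\bbL^{-nd}$, and $\pi_n(A)$ is a constructible subset of the $\bbFp$-variety $\SSDVR_n$, whose $\bbFq$-points are $\W_n(\bbFq)^d$. Applying $\counting_q$, which sends $\bbL$ to $q$ and $[X]$ to $\abs{X(\bbFq)}$, gives $\counting_q(\mu_{\SSDVR}(A))=\abs{\pi_n(A)(\bbFq)}\,q^{-nd}$. On the other hand, stability of $A$ at level $n$ implies that $A\cap\W(\bbFq)^d$ is the preimage under $\pi_n$ of $\pi_n(A)\cap\W_n(\bbFq)^d=\pi_n(A)(\bbFq)$. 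Since the truncation $\W(\bbFq)^d\to\W_n(\bbFq)^d$ has fibres each of Haar measure $q^{-nd}$ (by the standard Witt-coordinate description of Haar measure on $\W(\bbFq)^d$, recalled in the appendix), we get $\mu_{Haar}(A\cap\W(\bbFq)^d)=\abs{\pi_n(A)(\bbFq)}\,q^{-nd}$, matching the motivic computation.

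Next, for general measurable $A$, pick by the definition of measurability a sequence of stable sets $A_k$ with $\mu_{\SSDVR}(A\symdiff A_k)\to 0$ in $\KOW$. Continuity of $\counting_q$ yields $\counting_q(\mu_{\SSDVR}(A))=\lim_k\counting_q(\mu_{\SSDVR}(A_k))=\lim_k\mu_{Haar}(A_k\cap\W(\bbFq)^d)$ by step (a). To identify this limit with $\mu_{Haar}(A\cap\W(\bbFq)^d)$, estimate
\[
\absb{\mu_{Haar}(A\cap\W(\bbFq)^d)-\mu_{Haar}(A_k\cap\W(\bbFq)^d)}\leq\mu_{Haar}\bigp{(A\symdiff A_k)\cap\W(\bbFq)^d},
\]
and for any stable set $B$ containing $A\symdiff A_k$, step (a) together with the monotonicity of $\counting_q$ with respect to the partial ordering on $\KOW$ gives $\mu_{Haar}(B\cap\W(\bbFq)^d)=\counting_q(\mu_{\SSDVR}(B))$. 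Taking the infimum of $\mu_{\SSDVR}(B)$ over such $B$ (which is how the motivic measure of the non-stable set $A\symdiff A_k$ is defined) and invoking continuity of $\counting_q$ bounds the right-hand side by $\counting_q(\mu_{\SSDVR}(A\symdiff A_k))$, which tends to $0$.

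Finally, for a measurable function $f\colon\SSDVR_\infty\to\KOW$, the motivic integral $\int f\,d\mu_{\SSDVR}$ is by construction a convergent sum of terms of the form $x\cdot\mu_{\SSDVR}(f^{-1}(x))$, and applying the continuous homomorphism $\counting_q$ commutes with this convergent sum. Each term is transformed by step (b) into $\counting_q(x)\cdot\mu_{Haar}(f^{-1}(x)\cap\W(\bbFq)^d)$, which reassembles into $\int_{\W(\bbFq)^d}\counting_q\circ f\,d\mu_{Haar}$. I expect the only delicate point to be the approximation step (b): one must be careful that the topology on $\KOW$ is genuinely strong enough that continuity of $\counting_q$ transfers motivic approximations into Haar-measure approximations uniformly in $k$, which is exactly the reason the completion $\KOW$ was defined instead of working in $\KOC$.
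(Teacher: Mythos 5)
The paper does not actually prove this proposition; it is imported verbatim as Propositions~3.9 and~3.11 of \cite{MF}, so there is no internal argument to compare yours against. That said, your outline is the expected one and its skeleton is sound: the stable-set computation $\counting_q(\mu_{\SSDVR}(A))=\abs{\pi_n(A)(\bbFq)}q^{-nd}=\mu_{Haar}(A\cap\W(\bbFq)^d)$ is correct (using that the fibres of $\W(\bbFq)^d\to\W_n(\bbFq)^d$ have Haar measure $q^{-nd}$ and that $\pi_n(A)$ is defined over $\bbFp$, so its $\bbFq$-points are exactly $\pi_n(A)\cap\W_n(\bbFq)^d$), and pushing the continuous ring homomorphism $\counting_q$ through the convergent layer-cake sum defining $\int f\,d\mu_{\SSDVR}$ is legitimate in step (c).

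Two points in step (b) need repair. First, you control $\mu_{Haar}\bigl((A\symdiff A_k)\cap\W(\bbFq)^d\bigr)$ by taking an infimum of $\mu_{\SSDVR}(B)$ over single stable sets $B\supset A\symdiff A_k$, asserting that this is how the measure of a non-stable set is defined. It is not (compare the measurability hypotheses used in the product lemma of Section~\ref{MC8}): measurability of $A$ provides, for each $k$, a \emph{countable} family of stable sets $C_i^k$ with $A\symdiff A_k\subset\bigcup_{i}C_i^k$ and $u_k:=\sum_i\mu_{\SSDVR}(C_i^k)$ convergent, $u_k\to0$; a set admitting such a countable cover need not sit inside any single stable set of small measure, so the infimum argument as stated does not go through. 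The correct estimate is
\begin{equation*}
\mu_{Haar}^{*}\bigl((A\symdiff A_k)\cap\W(\bbFq)^d\bigr)\leq\sum_{i}\mu_{Haar}\bigl(C_i^k\cap\W(\bbFq)^d\bigr)=\sum_i\counting_q\bigl(\mu_{\SSDVR}(C_i^k)\bigr)=\counting_q(u_k)\longrightarrow0,
\end{equation*}
applying your step (a) to each $C_i^k$ and continuity of $\counting_q$ to the convergent sum. Second, you tacitly assume that $A\cap\W(\bbFq)^d$ is Haar measurable, which is part of what must be shown; with the corrected estimate it follows, since $A\cap\W(\bbFq)^d$ differs from the clopen (hence measurable) cylinder $A_k\cap\W(\bbFq)^d$ by a set of outer measure at most $\counting_q(u_k)$, and completeness of the Haar measure then yields both measurability and $\mu_{Haar}(A\cap\W(\bbFq)^d)=\lim_k\mu_{Haar}(A_k\cap\W(\bbFq)^d)=\counting_q(\mu_{\SSDVR}(A))$. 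With these repairs your proof is correct.
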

%%We next define the integrals of functions $\SSDVR_\infty\to\KOW$.
%{definition}
%Let the function $f\colon\SSDVR_\infty\to\KOW$ have measurable fibers and the property that the sum $\sum_{a\in\KOW}\mu_\SSDVR(f^{-1}(a))a$ is convergent, (in particular it has only countably many nonzero terms). We then say that $f$ is integrable and we define $\int fd\mu_\SSDVR$ to be the above limit.
%\end{definition}

%%If $A\subset\SSDVR_\infty$, let $\chi_A$ be the characteristic function of $A$ and define $\int_Afd\mu_\SSDVR:=\int f\cdot\chi_Ad\mu_\SSDVR$. If $\int_Afd\mu_\SSDVR$ and $\int_Bfd\mu_\SSDVR$ exists and $A$ and $B$ are disjoint then $\int_{A\cup B}fd\mu_\SSDVR$ exists and is equal to $\int_Afd\mu_\SSDVR+\int_Bfd\mu_\SSDVR$.
%%\begin{proposition}\label{16}
%Let $\SSDVR=\bbA_{\bbZp}^d$. If $A\subset\SSDVR_\infty$ is measurable and if $f\colon\SSDVR_\infty\to\KOWp$ is integrable, then
%$$\counting_q\int_A fd\mu_\SSDVR=\int_{A\cap\W(\bbFq)^d}\counting_q\circ f d\muH.$$
%\end{proposition}

\subsection{Motivic integrals of absolute values of polynomials}
We continue to use $\DVR$ to denote a complete discrete valuation ring with residue field $\f$.

To show that the integrals we are interested in exists we need the following simple approximation:
\begin{lemma}\label{7}
Let $\SSDVR=\bbA_{\DVR}^d$ and let $A$ be a measurable subset of $\SSDVR_\infty$. Then $0\leq\mu_\SSDVR(A)\leq1$ and $\dim\mu_\SSDVR(A)\leq0$.
\end{lemma}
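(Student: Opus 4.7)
The plan is to establish all three assertions first for stable subsets and then pass to the limit using the definition of measurability. In both of the two cases for $\DVR$, the scheme $\SSDVR_n$ is isomorphic (as an $\f$-scheme) to $\bbA_{\f}^{nd}$, since $\W_n^d$ and $(\bbA^d_{\f[t]/(t^n)})$ both have underlying space $\bbA^{nd}_{\f}$; in particular $[\SSDVR_n]=\bbL^{nd}$ and $\dim\SSDVR_n=nd$.

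First, suppose that $A$ is stable of level $n$, so $\pi_n(A)$ is a constructible subset of $\SSDVR_n$ and $\mu_{\SSDVR}(A)=[\pi_n(A)]\bbL^{-nd}$. From the disjoint decomposition $\SSDVR_n=\pi_n(A)\disjunion\bigp{\SSDVR_n\setminus\pi_n(A)}$ we get
$$[\pi_n(A)]+[\SSDVR_n\setminus\pi_n(A)]=\bbL^{nd}$$
in $\KOR$. Both summands on the left are classes of varieties, hence are $\geq0$, so $0\leq[\pi_n(A)]\leq\bbL^{nd}$; dividing by $\bbL^{nd}$ gives $0\leq\mu_{\SSDVR}(A)\leq1$. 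The dimension bound is immediate from $\dim\pi_n(A)\leq nd$, which forces $\dim\mu_{\SSDVR}(A)=\dim[\pi_n(A)]-nd\leq0$.

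Now let $A$ be an arbitrary measurable subset. By the definition of measurability recalled in Section \ref{MC-103} (following the standard construction from \cite{MF}), there is a sequence of stable sets $A_i$ such that $\mu_{\SSDVR}(A_i)\to\mu_{\SSDVR}(A)$ in the topology of $\KOW$. Each $\mu_{\SSDVR}(A_i)$ lies in the set
$$S=\bbset{x\in\KOW}{0\leq x\leq1\text{ and }\dim x\leq0}$$
by the stable case. The dimension condition $\dim x\leq0$ defines the subgroup $F^0\subset\KOL$, which is closed in the dimension filtration topology; since the $\KOW$-topology is finer than the dimension filtration topology, $F^0$ remains closed in $\KOW$. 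The two order conditions $x\geq0$ and $1-x\geq0$ are likewise closed in $\KOW$, since the partial ordering on $\KOW$ is extended from $\KOR$ by limits, so effective elements and their negatives by $1$ form closed cones. Hence $S$ is closed, and $\mu_{\SSDVR}(A)=\lim_i\mu_{\SSDVR}(A_i)\in S$.

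The main obstacle is the last step, i.e., verifying that the ordering conditions really do cut out closed subsets of $\KOW$; the dimension condition is essentially automatic from the comparison of topologies, but the extension of the partial ordering from $\KOR$ to $\KOW$ must be unpacked from \cite{MF} with enough care to guarantee that an element which is a $\KOW$-limit of effective elements is itself declared to be $\geq0$.
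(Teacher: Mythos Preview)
Your proposal is correct and follows essentially the same approach as the paper: establish the inequalities and dimension bound for stable sets via $\pi_n(A)\subset\bbA_\f^{nd}$, then pass to the limit. The ``main obstacle'' you flag is exactly what the paper dispatches by definition: the partial ordering (and dimension) on $\KOW$ is extended from $\KOR$ via the sequential representation of the completion, so the paper simply writes $\mu_\SSDVR(A)=(\mu_\SSDVR(A_i))_{i\in\bbN}$ and observes that $0=(0)_i\leq(\mu_\SSDVR(A_i))_i\leq(1)_i=1$ holds \emph{by definition}---in other words, the nonnegative cone is closed precisely because the ordering is set up that way in \cite{MF}.
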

\begin{proof}
Suppose first that $A$ is stable of level $n$. We have $\pi_n(A)\subset\SSDVR_n=\bbA_\f^{dn}$, giving immediately that $\dim A=\dim\pi_n(A)-nd\leq nd-nd=0$, i.e., $\dim\mu_\SSDVR(A)\leq0$. Also $0\leq[\pi_n(A)]\leq\bbL^{dn}$, hence $0\leq\mu_\SSDVR(A)\leq1$.

In the general case there are, by definition, stable subsets $A_i$ such that $\mu_\SSDVR(A)=\lim_{i\to\infty}\mu_\SSDVR(A_i)$, i.e., $\mu_\SSDVR(A)=(\mu_\SSDVR(A_i))_{i\in\bbN}$. Since the equalities holds for stable sets it follows by definition that $0=(0)_{i\in\bbN}\leq\mu_\SSDVR(A)\leq(1)_{i\in\bbN}=1$. The same reasoning goes for the statement about dimension.
\end{proof}

\begin{lemma}\label{6}
Let $\SSDVR=\bbA_{\DVR}^d$ and let $A_i$ be measurable subsets of $\SSDVR_\infty$. If $e_i\to\infty$ as $i\to\infty$, then the sum $\sum_{i\in\bbN}\mu_\SSDVR(A_i)\bbL^{-e_i}$ is convergent.
\end{lemma}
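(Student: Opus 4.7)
The plan is to reduce everything to Proposition \ref{MC-34} via the partial ordering. By Lemma \ref{7} we already have the uniform bound $0\leq\mu_\SSDVR(A_i)\leq1$ for every $i$. Multiplying by the non-negative element $\bbL^{-e_i}$ should preserve the order (this has to be checked, but it is essentially immediate: if $y-x=\bbL^{-a}[X]$ in $\KOL$ then $(y-x)\bbL^{-e_i}=\bbL^{-a-e_i}[X]$, so the difference is still of the same ``non-negative'' form). Hence, term by term,
\[
0\;\leq\;\mu_\SSDVR(A_i)\bbL^{-e_i}\;\leq\;\bbL^{-e_i}.
\]

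Next I would reduce the convergence of $\sum_i\mu_\SSDVR(A_i)\bbL^{-e_i}$ to that of the majorant $\sum_i\bbL^{-e_i}$. The latter converges by Proposition \ref{MC-34}, since $e_i\to\infty$. Applying the bound above to each tail $\sum_{i=N}^{M}\mu_\SSDVR(A_i)\bbL^{-e_i}$, one gets
\[
0\;\leq\;\sum_{i=N}^{M}\mu_\SSDVR(A_i)\bbL^{-e_i}\;\leq\;\sum_{i=N}^{M}\bbL^{-e_i},
\]
so the partial sums of our series are squeezed between $0$ and a Cauchy sequence. Since the topology on $\KOW$ is designed so that ``sandwiching'' in the partial order controls convergence (this is precisely the role of the partial ordering flagged in the paragraph following Proposition \ref{MC-34}), the partial sums of $\sum_i\mu_\SSDVR(A_i)\bbL^{-e_i}$ are themselves Cauchy, and hence the series converges in $\KOW$.

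The one delicate point I expect is verifying that the partial-order comparison really does give Cauchyness in the topology of \cite{MF}, and not merely in the weaker dimension-filtration topology on $\KOC$. This is the essential feature the author must have built into the definition of the topology on $\KOL$, and in practice I would invoke the explicit statement in \cite{MF} that comparison with a convergent majorant of the form $\sum\bbL^{-e_i}$ implies convergence. With that principle in hand, the argument is a one-line sandwich combined with Lemma \ref{7} and Proposition \ref{MC-34}; without it, one would have to inspect the seminorms defining the topology and check directly that they are monotone in the partial order.
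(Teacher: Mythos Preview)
Your argument is correct and matches the paper's proof essentially line for line: the paper also bounds $0\leq\mu_\SSDVR(A_i)\bbL^{-e_i}\leq\bbL^{-e_i}$ via Lemma~\ref{7}, invokes Proposition~\ref{MC-34} for the majorant, and then cites the comparison principle you anticipated (it is Lemma~2.17 of \cite{MF}) to conclude. Your only uncertainty---whether the sandwich really controls convergence in the stronger topology---is exactly what that lemma guarantees, so there is no gap.
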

\begin{proof}
By Lemma \ref{7}, $0\leq\mu_\SSDVR(A_i)\bbL^{-e_i}\leq\bbL^{-e_i}$. By Proposition \ref{MC-34} the sum $\sum_{i\in\bbN}\bbL^{-e_i}$ is convergent if and only if $\dim\bbL^{-e_i}\to-\infty$, i.e., $e_i\to\infty$. In this case it follows from Lemma 2.17 of \cite{MF} that $\sum_{i\in\bbN}\mu_\SSDVR(A_i)\bbL^{-e_i}$ is convergent.
\end{proof}

%\begin{lemma}
%$\SSDVR=\bbA_{\bbZp}^d$. Let $A_i$ be measurable subsets of $\SSDVR_\infty$. Suppose that the $A_i$ are disjoint. The sum $\sum_{i\in\bbN}\mu_\SSDVR(A_i)$ is convergent if and only if $\dim\mu_\SSDVR(A_i)\to-\infty$.
%\end{lemma}
%\begin{proof}
%By Lemma \ref{7}, $\sum_{i\leq n}\mu_\SSDVR(A_i)=\mu_\SSDVR(\bigcup_{i\leq n}A_i)\leq1$ and the sequence $(1)_{n\in\bbN}$ is convergent of uniform polynomial growth. That $(\sum_{i\leq n}\mu_\SSDVR(A_i))_{n}$ is convergent is hence equivalent to that it being Cauchy with respect to the dimension filtration, i.e., that $\dim\mu_\SSDVR(A_i)\to-\infty$.
%\end{proof}
Let $\SSDVR^1=\bb{A}^1_{\DVR}$ and let $\SSDVR^1_\infty$ be its arc space. We have a function $\ord\colon\SSDVR^1_\infty\to\bbN\cup\{\infty\}$, mapping $x$ to the biggest power of the uniformizer dividing $x$, with the properties that $\ord ab=\ord a+\ord b$ and $\ord(a+b)\geq\min\{\ord a,\ord b\}$. We continue to write $\SSDVR$ for $\bbA^d_{\DVR}$. If $f\in\DVR[X_1\dotsk X_d]$ it defines a function $\SSDVR_\infty\to\SSDVR^1_\infty$. So for $n\in\bbN$ we may consider the subset $\{\ord f\geq n\}:=\set{(a_1\dotsk a_d)\in\SSDVR_\infty}{\ord f(a_1\dotsk a_d)\geq n}$. When $\DVR=\bbZp$ this set has the property that $\{\ord f\geq n\}\cap\bbZp^d=\set{(a_1\dotsk a_d)\in\bbZp^d}{\ord_pf(a_1\dotsk a_d)\geq n}$.
\begin{lemma}\label{leuven3}
Let $\SSDVR=\bbA^d_{\DVR}$. The subset $\{\ord f\geq n\}\subset\SSDVR_\infty$ is stable of level $n$ and $\mu_\SSDVR(\{\ord f\geq n\})=[\pi_n(\{\ord f\geq n\})]\bbL^{-dn}$.
\end{lemma}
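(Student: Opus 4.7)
The plan is to show that $\{\ord f \geq n\}$ is literally a cylinder set of level $n$, i.e., that it equals $\pi_n^{-1}(\pi_n(\{\ord f \geq n\}))$ with $\pi_n(\{\ord f \geq n\})$ a constructible (in fact closed) subset of $\SSDVR_n$. Once this is established, the measure formula is just the definition of $\mu_\SSDVR$ on stable sets.

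First I would observe that the polynomial $f \in \DVR[X_1,\dots,X_d]$ induces a morphism of $\f$-schemes $f_n \colon \SSDVR_n \to \SSDVR^1_n$, compatibly with the truncation maps. In the power series case $\DVR = \psr{\f}{t}$ this is immediate: addition and multiplication in $R[t]/(t^n)$ are polynomial operations on the coefficients, so evaluating $f$ gives a morphism $(R[t]/(t^n))^d \to R[t]/(t^n)$ functorially in the $\f$-algebra $R$. In the Witt vector case $\DVR = \W(\f)$, the same holds because the $i$-th Witt component of a sum or product of truncated Witt vectors is a polynomial in the first $i+1$ components of the arguments; this is exactly the content recalled in the appendix, and it shows that $f$ descends to a morphism $\W_n^d \to \W_n^1$ of $\f$-schemes.

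Next I would note that for $a = (a_1,\dots,a_d) \in \SSDVR_\infty$, the condition $\ord f(a) \geq n$ is equivalent to the image of $f(a)$ in $\SSDVR^1_n(\overline{\f})$ being zero, i.e., $f_n(\pi_n(a)) = 0$. Hence
\[
\{\ord f \geq n\} = \pi_n^{-1}\bigl(f_n^{-1}(0)\bigr),
\]
where $f_n^{-1}(0)$ is the scheme-theoretic fiber over the origin of $\SSDVR^1_n = \bbA^n_\f$. This fiber is a closed subscheme of $\SSDVR_n = \bbA^{dn}_\f$, hence constructible, and $\pi_n(\{\ord f \geq n\}) = f_n^{-1}(0)$. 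Therefore $\{\ord f \geq n\}$ is stable of level $n$ in the sense of \cite{MF}.

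Finally, the measure formula is the definition of $\mu_\SSDVR$ on stable sets recalled in Section \ref{MC-103}: for $A \subset \SSDVR_\infty$ stable of level $n$, $\mu_\SSDVR(A) = [\pi_n(A)] \bbL^{-dn}$. The only step that requires any care is the first, where one must verify the polynomial nature of Witt-vector arithmetic to produce $f_n$; this is a standard fact about Witt vectors but is the one point I would want to cite from the appendix explicitly.
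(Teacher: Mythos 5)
Your proposal is correct and takes essentially the same route as the paper: you use the polynomial nature of (truncated) Witt/power-series arithmetic to see that the condition $\ord f\geq n$ depends only on $\pi_n(a)$ and cuts out a closed $\f$-subscheme of $\SSDVR_n$ (the paper writes this via the universal polynomials $f_0,\dots,f_{n-1}$, you package it as the induced morphism $f_n\colon\SSDVR_n\to\SSDVR^1_n$ and the cylinder $\pi_n^{-1}(f_n^{-1}(0))$), after which the measure formula is the definition. The one step the paper spells out that you leave implicit is the check that $\pi_m(\{\ord f\geq n\})=\pi_n(\{\ord f\geq n\})\times_{\f}\bbA_\f^{d(m-n)}$ for $m\geq n$, which is what makes the cylinder stable with level-independent measure under the standard definition; for $\SSDVR=\bbA^d_{\DVR}$ this is immediate because the truncation maps are coordinate projections, so it is a harmless omission rather than a gap.
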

\begin{proof}
To simplify the notation, we prove this for the special case when $\DVR=\W(\f)$; the general case is similar. When we view $\SSDVR_\infty$ as the $\overline{\f}$-points on the ring scheme $\W^d$, we see that $\{\ord f\geq n\}$ is actually the $\overline{\f}$-points on a closed subscheme. For let $\f[X_{i0},X_{i1}\dotsk X_{iN},\dots]_{i=1}^d$ represent $\W^d$. Let $f_0,f_1\dots$ be the universal polynomials defining $f$ in the Witt vectors, i.e., $f_n\in\f[X_{i0}\dotsk X_{in}]_{i=1}^d$ and if $R$ is any $\f$-algebra and $r_i=(r_{i0},r_{i1},\dots)\in\W(R)$ for $i=1\dotsk d$, then $f(r_1\dotsk r_d)=(f_0(r_1\dotsk r_d),f_1(r_1\dotsk r_d),\dots)\in\W(R)$. (We write $f_n(r_1\dotsk r_d)$ for $f_n(r_{10}\dotsk r_{d0};\dots; r_{1n}\dotsk r_{dn})\in R$.) We then see that $\{\ord f\geq n\}$ is identified with 
$$\set{(x_1\dotsk x_d)\in\W(\overline{\f})^d}{f(x_1\dotsk x_d)\equiv0\pmod{\V^n}}\subset\SSDVR_\infty,$$
i.e.,
$$\set{(x_1\dotsk x_d)\in\W(\overline{\f})^d}{f_i(x_1\dotsk x_d)=0\text{ for }i=0\dotsk  n-1}\subset\SSDVR_\infty,$$
This in turn is the $\overline{\f}$-points on the close subscheme
$$\spec\frac{\f[X_{i0},X_{i1}\dotsk X_{iN},\dots]_{i=1}^d}{(f_0\dotsk f_{n-1})}\subset\W^d.$$

Now $\pi_m(\SSDVR_\infty)=\W^d_m(\overline{\f})$. Hence, for $m\geq n$, we see that $\pi_{m}(\{\ord f\geq n\})$ is the $\overline{\f}$-points on the $\f$-scheme
$$\spec\frac{\f[X_{i0}\dotsk X_{i,m-1}]_{i=1}^d}{(f_0\dotsk f_{n-1})}.$$
In what follows we identify $\pi_{m}(\{\ord f\geq n\})$ with its underlying scheme. We then see that $\pi_m(\{\ord f\geq n\})=\pi_{n}(\{\ord f\geq n\})\times_{\f}\bbA_\f^{d(m-n)}$. The result follows.
\end{proof} 

Next consider the function $\underline{a}\mapsto\bbL^{-\ord f(\underline{a})}\colon\SSDVR_\infty\to\KOW$. For $a\in\W(\overline{\f})$ we write $\abs{a}:=\bbL^{-\ord a}$ and we want to compute the integral $\int_{\SSDVR_\infty}\abs{f}d\mu_\SSDVR$. The following proposition shows that the integral exists.
%\begin{lemma}
%$\underline{a}\mapsto\bbL^{-\ord_pf(\underline{a})}$ is integrable.
%\end{lemma}
%\begin{proof}
%By definition, $\int\bbL^{-\ord_pf}d\mu_\SSDVR=\mu_\SSDVR(f=0)\cdot0+\sum_{i\in\bbN}\mu_\SSDVR(\ord f=i)\bbL^{-i}$. We have to show that the sum really is convergent. Now $\{\ord f=i\}$ is stable of level $i+1$, hence the sum converges by Lemma \ref{6}.
%Since $\pi_{i+1}(\ord f=i)\subset\bbA^{d(i+1)}$ we have $[\pi_{i+1}(\ord f=i)]\leq\bbL^{d(i+1)}$, hence
%$$\mu_\SSDVR(\ord f=i)\bbL^{-i}=\frac{[\pi_{i+1}(\ord f=i)]}{\bbL^{d(i+1)}}\bbL^{-i}.$$
%Therefore
%$$0\leq\sum_{i=0}^N\mu_\SSDVR(\ord f=i)\bbL^{-i}\leq\sum_{i=0}^N\bbL^{-i}.$$
%Now for every $n$, $\weight_n\bigl(\sum_{i=0}^N\bbL^{-i}\bigr)\leq\sum_{i=0}^N\weight_n(\bbL^{-i})\leq1$. Also $\dim\bbL^{-i}\to-\infty$. Hence the sequence $(\sum_{i=0}^N\bbL^{-i})_{N\in\bbN}$ is convergent of uniform polynomial growth, the sum $\sum_{i\in\bbN}\mu_\SSDVR(\ord f=i)\bbL^{-i}$ exists.
%\end{proof}
\begin{proposition}\label{11}
Let $\SSDVR=\bbA^d_{\DVR}$. Let $A$ be a measurable subset of $\SSDVR_\infty$, and $f\in\DVR[X_1\dotsk X_d]$. The integral $\int_A\abs{f}d\mu_\SSDVR=\int_A\bbL^{-\ord f}d\mu_\SSDVR$ exists. Moreover, when $\SSDVR=\bbA_{\bbZp}^d$ we have, for $q$ any power of $p$, $\counting_q\int_A\abs{f}d\mu_\SSDVR=\int_{A\cap\W(\bbFq)^d}\abs{f}_pd\muH$.
\end{proposition}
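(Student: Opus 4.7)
The plan is to decompose the domain according to the level sets of $\ord f$ and reduce the integrability to Lemma \ref{6}, then use continuity of $\counting_q$ together with Proposition \ref{MC39} to handle the specialization.

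First I would partition $\SSDVR_\infty$ as
$$\SSDVR_\infty = \{f = 0\} \sqcup \bigsqcup_{n \geq 0}\{\ord f = n\},$$
where $\{\ord f = n\} := \{\ord f \geq n\}\setminus\{\ord f \geq n+1\}$. By Lemma \ref{leuven3}, each $\{\ord f \geq n\}$ is stable of level $n$, so $\{\ord f = n\}$ is measurable (being the difference of stable sets), and $\{f=0\} = \bigcap_n \{\ord f \geq n\}$ is measurable as well. Intersecting with $A$ gives a measurable partition $A = A_\infty \sqcup \bigsqcup_{n\geq 0} A_n$, and by construction $\abs{f} = \bbL^{-\ord f}$ is constant on each piece, taking value $\bbL^{-n}$ on $A_n$ and $0$ on $A_\infty$. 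Thus $\abs{f}$ is a simple measurable function in the sense of the standard definition.

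Next I would show that the formal series
$$\sum_{n\geq 0}\mu_\SSDVR(A_n)\bbL^{-n}$$
converges in $\KOW$. This is immediate from Lemma \ref{6} applied with $e_i = i$, since $e_i \to \infty$. Therefore $\int_A \abs{f}\,d\mu_\SSDVR$ exists and equals this sum. (The piece $A_\infty$ contributes $0$.)

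For the specialization statement, I would apply $\counting_q$ to the convergent series. Since $\counting_q\colon \KOWp \to \bb{R}$ is continuous, it commutes with the sum, giving
$$\counting_q\int_A \abs{f}\,d\mu_\SSDVR = \sum_{n\geq 0}\counting_q\bigl(\mu_\SSDVR(A_n)\bigr)\, q^{-n}.$$
Proposition \ref{MC39} then identifies $\counting_q\mu_\SSDVR(A_n) = \muH\bigl(A_n \cap \W(\bbFq)^d\bigr)$. On $A_n \cap \W(\bbFq)^d$ the usual $p$-adic absolute value $\abs{f}_p$ equals $q^{-n}$ (with the chosen normalization), so the right-hand side is precisely the Lebesgue-style expansion of $\int_{A\cap \W(\bbFq)^d}\abs{f}_p\,d\muH$ obtained from the same level-set partition.

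The only delicate point is the first step: verifying that $\{f=0\}$ and the level sets $\{\ord f = n\}$ fit the measurability framework of \cite{MF}, and that the resulting simple-function expansion indeed computes the motivic integral as defined there. This is essentially bookkeeping, because Lemma \ref{leuven3} already pins down the stable structure of $\{\ord f \geq n\}$; the rest is routine manipulation with countable unions and differences of measurable sets.
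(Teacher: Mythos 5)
Your proof is correct and follows essentially the same route as the paper: the same decomposition of $A$ along the level sets $\{\ord f=n\}$ (stable by Lemma \ref{leuven3}, hence $A\cap\{\ord f=n\}$ measurable) with convergence of $\sum_n\mu_\SSDVR(A_n)\bbL^{-n}$ supplied by Lemma \ref{6}. The only difference is cosmetic: for the specialization the paper simply cites the second statement of Proposition \ref{MC39}, namely $\counting_q\int_{\SSDVR_\infty}f\,d\mu_\SSDVR=\int_{\W(\bbFq)^d}\counting_q\circ f\,d\muH$, whereas you rederive that identity termwise from continuity of $\counting_q$ and the measure statement, which is equally valid.
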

\begin{proof}
By definition the integral equals $\mu_\SSDVR(f=0)\cdot0+\sum_{i\in\bbN}\mu_\SSDVR\bigl(A\cap\{\ord f=i\}\bigr)\bbL^{-i}$. By Lemma \ref{leuven3}, $\{\ord f=i\}$ is stable, hence $A\cap\{\ord f=i\}$ is measurable. The integral therefore exists by Lemma \ref{6}. Next, by Proposition \ref{MC39}, $\counting_q\int_A\bbL^{-\ord f}d\mu_\SSDVR=\int_{A\cap\W(\bbFq)^d}\abs{f}_pd\muH$.
\end{proof}

The primary purpose of this paper is to show that the integral of the absolute value of a certain polynomial in many variables is a rational function in $\bbL$, with coefficients in $\bb{Z}$. For this we begin with some lemmas about general integrals of this kind of functions.

\begin{lemma}\label{17}
Let $A=\bigcup_{i\in\bbN}A_i$ be a disjoint union of stable subsets and suppose that $\sum_{i\in\bbN}\mu_\SSDVR(A_i)$ is convergent (so that $A$ is measurable). Then for any $f\in\DVR[X_1\dotsk X_d]$, we have $\int_A\abs{f}d\mu_\SSDVR=\sum_{i\in\bbN}\int_{A_i}\abs{f}d\mu_\SSDVR$.
\end{lemma}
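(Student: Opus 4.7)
The plan is to compute both sides of the claimed equality by unraveling the definition of $\int\abs{f}d\mu_\SSDVR$ used in the proof of Proposition \ref{11}, and then to exchange the order of a double summation. Concretely, by that definition
$$\int_A\abs{f}d\mu_\SSDVR=\sum_{n\in\bbN}\mu_\SSDVR\bigl(A\cap\{\ord f=n\}\bigr)\bbL^{-n},$$
(the contribution from $\{f=0\}$ is $0$), and analogously for each $A_i$ in place of $A$. So I would aim to prove
$$\sum_{n\in\bbN}\mu_\SSDVR\bigl(A\cap\{\ord f=n\}\bigr)\bbL^{-n}=\sum_{i\in\bbN}\sum_{n\in\bbN}\mu_\SSDVR\bigl(A_i\cap\{\ord f=n\}\bigr)\bbL^{-n}.$$

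First I would show that each piece $A_i\cap\{\ord f=n\}$ is stable: by Lemma \ref{leuven3}, $\{\ord f\geq n\}$ is stable of level $n$, hence so is $\{\ord f=n\}=\{\ord f\geq n\}\setminus\{\ord f\geq n+1\}$, and intersections of stable sets are stable. Next I would apply countable additivity of $\mu_\SSDVR$ to the disjoint decomposition $A\cap\{\ord f=n\}=\bigsqcup_i(A_i\cap\{\ord f=n\})$; this is legitimate because monotonicity gives $0\leq\mu_\SSDVR(A_i\cap\{\ord f=n\})\leq\mu_\SSDVR(A_i)$, so the hypothesis that $\sum_i\mu_\SSDVR(A_i)$ converges forces $\sum_i\mu_\SSDVR(A_i\cap\{\ord f=n\})$ to converge (via the order on $\KOW$ and Lemma 2.17 of \cite{MF}, as used in Lemma \ref{6}), and it equals $\mu_\SSDVR(A\cap\{\ord f=n\})$ by definition of the measure on a countable union. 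Substituting this into the defining sum for $\int_A\abs{f}d\mu_\SSDVR$ yields the left-hand double sum.

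It remains to interchange the two summations. For this I would use the same bound: $0\leq\mu_\SSDVR(A_i\cap\{\ord f=n\})\bbL^{-n}\leq\mu_\SSDVR(A_i)\bbL^{-n}$, and the fact that $\sum_i\sum_n\mu_\SSDVR(A_i)\bbL^{-n}=\bigl(\sum_i\mu_\SSDVR(A_i)\bigr)\cdot\bigl(\sum_n\bbL^{-n}\bigr)$, where both factors converge (the second by Proposition \ref{MC-34}). Hence the double sum converges absolutely in the order-theoretic sense appropriate to $\KOW$, and either order of summation gives the same value. Summing first over $n$ produces $\sum_i\int_{A_i}\abs{f}d\mu_\SSDVR$.

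The main obstacle is the last step: interchanging summations in $\KOW$ is not automatic, since $\KOW$ is only a completion of a partially ordered ring, not an $L^1$-space. I expect the justification to go through by invoking the Fubini-type statement implicit in the construction in \cite{MF} (namely Lemma 2.17 there, which already powers Lemma \ref{6}) applied to the doubly-indexed family $\mu_\SSDVR(A_i\cap\{\ord f=n\})\bbL^{-n}$, using the dominating family $\mu_\SSDVR(A_i)\bbL^{-n}$ to control convergence uniformly in one index at a time.
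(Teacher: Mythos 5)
Your proposal is correct and follows essentially the same route as the paper: expand the integral over the level sets $\{\ord f=m\}$, use countable additivity of $\mu_\SSDVR$ on the disjoint decomposition $A\cap\{\ord f=m\}=\bigcup_i\bigl(A_i\cap\{\ord f=m\}\bigr)$ (with convergence of $\sum_i\mu_\SSDVR(A_i\cap\{\ord f=m\})$ obtained by dominating the pieces by $\mu_\SSDVR(A_i)$, which is what the paper gets from Lemma~3.8 and Proposition~3.7 of \cite{MF}), and then interchange the resulting double sum by a domination/rearrangement argument in $\KOW$ (the paper's appeal to Lemma~\ref{6} together with Lemma~2.19 of \cite{MF}). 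The only differences are which auxiliary lemmas of \cite{MF} are cited for these standard convergence steps, which you could not have known; the mathematical content is the same.
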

\begin{proof}
By Proposition \ref{11} the integral exists. Since $A_i\cap\{\ord f=m\}$ is stable, and $A_i\cap\{\ord f=m\}\subset A_i$, it follows from Lemma 3.8 of \cite{MF} that the sum $\sum_{i\in\bbN}\mu_\SSDVR\bigl(A_i\cap\{\ord f=m\}\bigr)$ is convergent. Hence, since the union $A\cap\{\ord f=m\}=\bigcup_{i\in\bbN}A_i\cap\{\ord f=m\}$ is disjoint it follows from Proposition 3.7 of \emph{loc.cit.} that $\mu_\SSDVR\bigl(A\cap\{\ord f=m\}\bigr)=\sum_{i\in\bbN}\mu_\SSDVR\bigl(A_i\cap\{\ord f=m\}\bigr)$. We may therefore write
\begin{align*}
\int_{A}\abs{f}d\mu_\SSDVR&=\sum_{m\in\bbN}\mu_\SSDVR\bigl(A\cap\{\ord f=m\}\bigr)\bbL^{-m}\\
&=\sum_{m\in\bbN}\sum_{i\in\bbN}\mu_\SSDVR\bigl(A_i\cap\{\ord f=m\}\bigr)\bbL^{-m}.
\end{align*}
Because of Lemma \ref{6}, if we do the above summation over an enumeration of $\bbN^2$, it is convergent. Hence by Lemma 2.19 of \emph{loc.cit} it equals
\begin{equation*}
\sum_{i\in\bbN}\sum_{m\in\bbN}\mu_\SSDVR\bigl(A_i\cap\{\ord f=m\}\bigr)\bbL^{-m}=\sum_{i\in\bbN}\int_{A_i}\abs{f}d\mu_\SSDVR.\qedhere
\end{equation*}
\end{proof}

Let $f_1\dotsk f_r\in\DVR[X_1\dotsk X_d]$. For $\alpha=(\alpha_1\dotsk\alpha_r)\in\bbN^r$ we write $\{\ord f_i=\alpha_i\}_{i=1}^r$ for the subset $\set{a\in\SSDVR_\infty}{\ord f_i(a)=\alpha_i}_{i=1}^r\subset\SSDVR_\infty$.

\begin{lemma}\label{8}
Let $\SSDVR=\bbA^d_{\DVR}$. For $I\subset(\bbN\cup\{\infty\})^d$ (finite or infinite), let $U_I:=\bigcup_{\alpha\in I}\{\ord X_i=\alpha_i\}_{i=1}^d$. Let $f\in\DVR[X_1\dotsk X_d]$. Then $\int_{U_I}\abs{f}d\mu_\SSDVR=\sum_{\alpha\in I}\int_{\{\ord X_i=\alpha_i\}}\abs{f}d\mu_\SSDVR$. (In particular the integral exists.)
\end{lemma}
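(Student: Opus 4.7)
The plan is to reduce this to Lemma \ref{17} after peeling off a measure-zero remainder. First I would split $I = I_0 \sqcup I_\infty$, where $I_0 := I \cap \bbN^d$ collects the tuples with all entries finite, and set $A_\alpha := \{\ord X_i = \alpha_i\}_{i=1}^d$. For $\alpha \in I_0$ the set $A_\alpha$ is stable: applying Lemma \ref{leuven3} to $f = X_j$ shows each $\{\ord X_j \geq n\}$ is stable, so the difference $\{\ord X_j = \alpha_j\} = \{\ord X_j \geq \alpha_j\} \setminus \{\ord X_j \geq \alpha_j + 1\}$ is stable, and intersecting over $j$ preserves stability. The $A_\alpha$ are manifestly pairwise disjoint.

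Next I would bound measures and set up the convergence. Writing $s(\alpha) := \alpha_1 + \cdots + \alpha_d$, the containment $A_\alpha \subset \{\ord X_i \geq \alpha_i\}_{i=1}^d$ together with a direct computation via Lemma \ref{leuven3} gives $\mu_\SSDVR(A_\alpha) \leq \bbL^{-s(\alpha)}$. Enumerate $I_0$ as $\alpha^{(1)}, \alpha^{(2)}, \ldots$ with $s(\alpha^{(k)})$ nondecreasing; since only finitely many tuples in $\bbN^d$ share a given weight, $s(\alpha^{(k)}) \to \infty$. Proposition \ref{MC-34} yields convergence of $\sum_k \bbL^{-s(\alpha^{(k)})}$, and the domination step underlying Lemma \ref{6} (Lemma 2.17 of \cite{MF}) transfers this to convergence of $\sum_k \mu_\SSDVR(A_{\alpha^{(k)}})$. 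Lemma \ref{17} then delivers $\int_{U_{I_0}} \abs{f} d\mu_\SSDVR = \sum_{\alpha \in I_0} \int_{A_\alpha} \abs{f} d\mu_\SSDVR$.

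Finally I would absorb $I_\infty$. For $\alpha \in I_\infty$ with $\alpha_i = \infty$, $A_\alpha \subset \{X_i = 0\} = \bigcap_n \{\ord X_i \geq n\}$; the latter is measurable with measure dominated by $\bbL^{-n}$ for every $n$, hence zero. Thus both $A_\alpha$ and $U_{I_\infty}$ are measurable of measure zero and contribute nothing to any integral of $\abs{f}$. Adding these zero terms to both sides gives the full identity over $I$ and in particular the existence of $\int_{U_I} \abs{f} d\mu_\SSDVR$. The only delicate step is the enumeration-plus-domination argument for convergence in the non-ultrametric topology of $\KOW$; the rest is routine verification against the machinery already in place.
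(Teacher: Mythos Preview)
Your proof is correct and follows essentially the same route as the paper: bound $\mu_\SSDVR(A_\alpha)\leq\bbL^{-s(\alpha)}$ via the containment in $\{\ord X_i\geq\alpha_i\}$, deduce convergence of $\sum_\alpha\mu_\SSDVR(A_\alpha)$ from Proposition~\ref{MC-34} together with Lemma~2.17 of \cite{MF}, and then invoke Lemma~\ref{17}. The one difference is that you explicitly split off and dispose of the tuples containing $\infty$, whereas the paper treats all of $I$ at once and lets those terms be absorbed by the bound (effectively reading $\bbL^{-\infty}=0$); your version is slightly more careful here, since Lemma~\ref{17} as stated asks for stable pieces and the $A_\alpha$ with an infinite coordinate are not stable.
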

\begin{proof}
We show that $U_I$ is measurable, the result then follows from Lemma \ref{17}. Since the union $U_{I}=\bigcup_{\alpha\in I}\{\ord X_i=\alpha_i\}_{i=1}^d$ is disjoint it suffices, by Proposition 3.7 of \cite{MF}, to prove convergence of the sum
$$\sum_{\alpha\in I}\mu_\SSDVR(\{\ord X_i=\alpha_i\}_{i=1}^d).$$

Let $N$ be a large integer. We have $\pi_{N}(\{\ord X_i=\alpha_i\}_{i=1}^d)\subset\pi_{N}(\{\ord X_i\geq\alpha_i\}_{i=1}^d)$. The underlying scheme of this latter set is
$$\spec\frac{\f[X_{i0}\dotsk X_{iN}]_{i=1}^d}{(X_{i0}\dotsk X_{i,\alpha_i-1})_{i=1}^d}=\spec\f[X_{i\alpha_i}\dotsk X_{iN}]_{i=1}^d.$$
Hence $[\pi_{N}(\{\ord X_i=\alpha_i\}_{i=1}^d)]\leq\bbL^{N d-\sum_{i=1}^d\alpha_i}$ and consequently $\mu_\SSDVR(\{\ord X_i=\alpha_i\}_{i=1}^d)\leq\bbL^{-\sum_{i=1}^d\alpha_i}$. Now, when $I$ is infinite we see that as $\alpha$ varies over $I$, $\max\{\alpha_i\}_{i=1}^d\to\infty$, hence that $-\sum_{i=1}^d\alpha_i\to-\infty$. So by Proposition \ref{MC-34}, $\sum_{\alpha\in I}\bbL^{-\sum_{i=1}^d\alpha_i}$ is convergent, hence by Lemma 2.17 of \emph{loc.cit}, $\sum_{\alpha\in I}\mu_\SSDVR(\{\ord X_i=\alpha_i\}_{i=1}^d)$ is convergent.
%Therefore, the dimension of $\{\ord X_i=\alpha_i\}_{i=1}^d$ is less than $(N_\alpha d-\sum_{i=1}^d\alpha_i)-N_\alpha d=-\sum_{i=1}^d\alpha_i$, and this tends to $-\infty$ as $N_\alpha\to\infty$.
\end{proof}

\section{The motivic integral of a polynomial in one variable}\label{24}
Let $\DVR$ be a complete discrete valuation ring with residue field $\f$, constructed as in Subsection \ref{MC-103}. In the preceding section we proved the existence of the integral $\int_{\SSDVR_\infty}\abs{f}d\mu_{\SSDVR}\in\KOW$, where $\SSDVR=\bbA^d_{\DVR}$, and $f$ is a polynomial in $d$ variables with coefficients in $\DVR$. In this section we compute this integral more explicitly, in the case when $d=1$. This type of computation is standard in the theory of motivic integration. We include this section anyway, in order to fix notation for the later sections, and also to give a simple example.

In this section we let $\SSDVR=\bbA_{\DVR}^1$. We use the notation of the preceding section. In particular, let $f_i\in\f[X_0\dotsk X_i]$ be the universal polynomials that define the function $x\mapsto f(x)\colon\SSDVR_\infty\to\SSDVR_\infty$. More precisely, the $f_i$ are such that if $\DVR=\W(\f)$ and
$$X:=(X_0,X_1\dotsk X_i,\dots)\in\W(\f[X_0,X_1\dotsk X_i,\dots])$$
then $$f(X)=\bigl((f_0(X_0),f_1(X_0,X_1)\dotsk f_i(X_0\dotsk X_i),\dots\bigr)\in\W(\f[X_0,X_1\dotsk X_i,\dots])$$
If $\DVR=\psr{\f}{t}$ and $X:=\sum_{i\geq0}X_it^i\in\psr{\f[X_0\dotsk X_i,\dots]}{t}$ then $f(X)=\sum_{i\geq0}f_i(X_0\dotsk X_{i})t^i$.

Let $x=(x_0,x_1\dotsk x_i,\dots)\in\SSDVR_\infty$ (recall that we write $(x_0,x_1,\dots)$ for $\sum_{i\geq0}x_it^i$ in order to get a uniform notation). Write $f_i(x)$ for $f_i(x_0\dotsk x_i)$. That the inequality $\ord f(x)\geq n$ holds is then equivalent to $f_0(x)=f_1(x)=\dots=f_{n-1}(x)=0$. If $m\geq n$ we then see that $\pi_m(x)=(x_0\dotsk x_{m-1})\in\SSDVR_m(\overline{\f})$ belongs to $\pi_m\bigl(\{\ord f\geq n\}\bigr)$ if and only if $f_0(x)=f_1(x)=\dots=f_{n-1}(x)=0$. Therefore, $\pi_m\bigl(\{\ord f\geq n\}\bigr)$ equals the $\overline{\f}$-points on the $\f$-scheme $\spec\tfrac{\f[X_0\dotsk X_{m-1}]}{(f_0\dotsk f_{n-1})}$, and $\pi_m^{-1}\pi_m\bigl(\{\ord f\geq n\}\bigr)=\{\ord f\geq n\}$. In what follows we will identify $\pi_m\bigl(\{\ord f\geq n\}\bigr)$ with its underlying scheme.

\begin{proposition}[Motivic Newton's Lemma]\label{MC-105}
Let $f\in\DVR[X]$ and assume that $f_0$ is non-constant and separable. Consider the subset $\{\ord f\geq n\}\subset\SSDVR_\infty$, where $n\geq1$. We have an isomorphism of $\f$-schemes $\pi_n\bigl(\{\ord f\geq n\}\bigr)\to\pi_1\bigl(\{\ord f\geq n\}\bigr)$. In particular, $\mu_\SSDVR\bigl(\{\ord f\geq n\}\bigr)=\Bigl[\spec\tfrac{\f[X_0]}{(f_0)}\Bigr]\bbL^{-n}$.
\end{proposition}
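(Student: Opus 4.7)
The plan is to exhibit the desired isomorphism directly, in the spirit of a motivic Hensel's lemma, after which the measure statement is immediate from Lemma \ref{leuven3} (with $d=1$). The projection $\pi_n(\{\ord f\geq n\})\to\pi_1(\{\ord f\geq n\})$ corresponds on coordinate rings to the inclusion
$$\iota\colon \f[X_0]/(f_0)\hookrightarrow \f[X_0,\dots,X_{n-1}]/(f_0,\dots,f_{n-1}),$$
and the task is to construct a two-sided inverse $\rho$ by solving the equations $f_i=0$ for $X_i$ recursively.

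The central structural input is that for every $i\geq 1$ the universal polynomial $f_i$ is linear in its last variable:
$$f_i(X_0,\dots,X_i)=c_i(X_0)\,X_i+h_i(X_0,\dots,X_{i-1}),$$
where $h_i\in\f[X_0,\dots,X_{i-1}]$ and $c_i(X_0)\in\f[X_0]$ is a Frobenius twist of $f_0'(X_0)$. In the power series case ($\DVR=\psr{\f}{t}$) the Taylor expansion of $f\bigl(\sum_j x_j t^j\bigr)$ gives $c_i(X_0)=f_0'(X_0)$ directly. In the Witt vector case ($\DVR=\W(\f)$) the same kind of expansion, carried out through the ghost-component formalism recalled in the appendix, yields $c_i(X_0)=f_0'(X_0)^{p^i}$; verifying this identity by induction on $i$ is the main, though routine, obstacle in the proof. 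In either case the separability of $f_0$ makes $f_0'(X_0)$ a unit in $R:=\f[X_0]/(f_0)$, and therefore $c_i(X_0)$ is also a unit in $R$.

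Granting the structural input, I define $\rho\colon\f[X_0,\dots,X_{n-1}]/(f_0,\dots,f_{n-1})\to R$ recursively by $\rho(X_0)=X_0$ and
$$\rho(X_i)=-h_i\bigl(X_0,\rho(X_1),\dots,\rho(X_{i-1})\bigr)/c_i(X_0);$$
a direct substitution gives $\rho(f_i)=0$, so $\rho$ is a well-defined ring map and clearly $\rho\circ\iota=\mathrm{id}_R$. In the other direction, in the target of $\iota$ the relation $c_i(X_0)X_i=-h_i(X_0,\dots,X_{i-1})$ holds and $c_i(X_0)$ remains a unit (its invertibility depends only on $f_0=0$), so $X_i$ is forced to equal $\iota(\rho(X_i))$; hence $\iota\circ\rho=\mathrm{id}$ as well. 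Combining this scheme-theoretic isomorphism with Lemma \ref{leuven3} yields $\mu_\SSDVR(\{\ord f\geq n\})=\bigl[\spec \f[X_0]/(f_0)\bigr]\bbL^{-n}$.
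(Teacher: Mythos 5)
Your proposal is correct and takes essentially the same route as the paper: everything hinges on the identity $f_i=(\tfrac{\partial f_0}{\partial X_0})^{p^i}X_i+h_i(X_0,\dots,X_{i-1})$ (the paper's \eqref{eq:44}), which the paper obtains in one stroke by Taylor expanding $f\bigl(\tilde{x}+\V^i\R(X_i)\bigr)$ in $\W_{i+1}$ using $\bigl(\V^i\R(X_i)\bigr)^2=0$ and $\bff{a}\cdot\V\bff{b}=\V(\F\bff{a}\cdot\bff{b})$, together with invertibility of $\tfrac{\partial f_0}{\partial X_0}$ from separability; your explicit inverse $\rho$ is just a repackaging of the paper's step-by-step proof that each $R_i\to R_{i+1}$ is bijective. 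The only caveat is that the "routine" structural input you defer is precisely where the paper spends its effort, and it is proved there by this direct Witt-vector Taylor expansion rather than by an induction on $i$.
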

\begin{proof}
Let
\begin{align*}
R_i:=\frac{\field[X_0,\dots,X_{i-1}]}{(f_0,\dots,f_{i-1})} && i\geq1.
\end{align*}
Then $\pi_i\bigl(\{\ord f\geq n\}\bigr)=\spec R_i$ for $i=1,\dots,n$ and we want to prove that $R_n\simeq R_1$. We do this by proving that the canonical homomorphism $R_i\rightarrow R_{i+1}$ is an isomorphism for every $i\geq1$.

To simplify notation, we do this only in the mixed characteristic case. Let $x:=(X_0,\dots,X_i)\in\W_{i+1}\bigl(\field[X_0,\dots,X_i]\bigr)$ and $\tilde{x}:=(X_0,\dots,X_{i-1},0)$. From \eqref{41} it follows that $x=\tilde{x}+\V^i\R(X_i)$, so we may Taylor expand to get
\begin{equation}\label{eq:43}
\begin{split}
f(x)=&f\bigl(\tilde{x}+\V^i\R(X_i)\bigr)\\
=&f(\tilde{x})+\frac{\partial f}{\partial X}(\tilde{x})\cdot\V^i\R(X_i)+O\bigl(\V^i\R(X_i)\bigr)^2\in\W_{i+1}\bigl(\f[X_0\dotsk X_i]\bigr).
\end{split}
\end{equation}
Here $f(\tilde{x})=(f_0,\dots,f_{i-1},q)$, where $q$ is a polynomial in $\f[X_0,\dots,X_{i-1}]$. Moreover, since $\pi_1f=f_0$, it follows that if $\tfrac{\partial f}{\partial X}(x)=(f^*_0,\dots,f^*_i)$ then $f_0^*=\tfrac{\partial f_0}{\partial X_0}$. Hence $\tfrac{\partial f}{\partial X}(\tilde{x})=(\tfrac{\partial f_0}{\partial X_0},\dots)$. Finally by Proposition \ref{prop:40} $\bigl(\V^i\R(X_i)\bigr)^2=F^i\V^{2i}\bigl(\R(X_i)\bigr)=0\in\W_{i+1}\bigl(\f[X_1\dotsk X_i]\bigr)$. Hence if we write explicitly we see that the right hand side of \eqref{eq:43} is
\begin{align*}
(f_0,\dots,f_{i-1},q)+(\tfrac{\partial f_0}{\partial X_0},\dots)\cdot(0,\dots,0,X_i)
=&(f_0,\dots,f_{i-1},q)+(0,\dots,0,(\tfrac{\partial f_0}{\partial X_0})^{p^i}X_i)\\
=&(f_0,\dots,f_{i-1},q+(\tfrac{\partial f_0}{\partial X_0})^{p^i}X_i).
\end{align*}
On the other hand, by definition, $f(x)=(f_0,\dots,f_i)\in\W_{i+1}\bigl(\field[X_0,\dots,X_i]\bigr)$ hence we get the identity
\begin{equation}\label{eq:44}
f_i(X_0,\dots,X_i)=q(X_0,\dots,X_{i-1})+(\tfrac{\partial f_0}{\partial X_0})^{p^i}\cdot X_i
\end{equation}
in $\field[X_0,\dots,X_i]$.

We shall also use the hypothesis that $f$ is separable modulo $\V$. This means that $\tfrac{\partial f_0}{\partial X_0}$ is invertible in $R_1$. Let $(\tfrac{\partial f_0}{\partial X_0})^{-1}$ and $h\in\f[X_0]$ be such that $\tfrac{\partial f_0}{\partial X_0}\cdot(\tfrac{\partial f_0}{\partial X_0})^{-1}=1+hf_0$ in $\field[X_0]$.

We now prove that $R_i\rightarrow R_{i+1}$ is injective. Let $g\in\f[X_1\dotsk X_{i-1}]$. We have to prove that if $\overline{g}=0\in R_{i+1}$ then $\overline{g}=0\in R_i$. So suppose that $g=h_0f_0+\dots+h_if_i\in\f[X_0\dotsk X_i]$, where $h_j\in\f[X_1\dotsk X_i]$. By \eqref{eq:44} this gives
\begin{equation*}
g=h_0f_0+\dots+h_{i-1}f_{i-1}+h_i\cdot\bigl(q+\tfrac{\partial f_0}{\partial X_0}^{p^i}\cdot X_i\bigr)\in\f[X_0\dotsk X_i].
\end{equation*}
Substituting $-q\cdot(\tfrac{\partial f_0}{\partial X_0})^{-p^i}$ for $X_i$ then gives
\begin{equation*}
g=h^*_0f_0+\dots+h_{i-1}^*f_{i-1}+h_i^*\cdot(q-q+h^*f_0)\in\f[X_0\dotsk X_{i-1}],
\end{equation*}
where the $h_j^*$ and $h^*$ are polynomials in $\f[X_0,\dots,X_{i-1}]$. Hence $\overline{g}=0\in R_i$ and consequently $R_i\rightarrow R_{i+1}$ is injective.

Finally we prove that $R_i\rightarrow R_{i+1}$ is surjective. It suffices to show that $\overline{X}_i$ is in the image of $R_i$. Working in $R_{i+1}$, \eqref{eq:44} becomes
\begin{equation}\label{40}
0=\overline{q}+\overline{\tfrac{\partial f_0}{\partial X_0}}^{p^i}\cdot\overline{X_i}.
\end{equation}
Now $\overline{\tfrac{\partial f_0}{\partial X_0}}\in R_1$ is invertible, and $R_1\neq0$ by assumption. Since $R_1\to R_{i+1}$ is injective it hence follows that the image of $\overline{\tfrac{\partial f_0}{\partial X_0}}$ in $R_{i+1}$ is invertible, consequently we can write \eqref{40} as
\begin{equation*}
\overline{X_i}=-\overline{q}\cdot\overline{\tfrac{\partial f_0}{\partial X_0}}^{-p^i}\in R_{i+1}.
\end{equation*}
The right hand side involves only the variables $X_0,\dots,X_{i-1}$, hence is in the image of $R_i$.
\end{proof}

\begin{proposition}\label{MC-21}
Let $f\in\DVR[X]$ and assume that $f_0\in\f[X_0]$ is separable and non-constant. Then
$$\int\abs{f}d\mu_\SSDVR=1-[\spec\f[X_0]/(f_0)]\frac{1}{\bbL+1}\in\KOW.$$
\end{proposition}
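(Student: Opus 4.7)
The plan is to unfold the definition of $\int\abs{f}d\mu_\SSDVR$ as a sum over level sets of $\ord f$, reduce each piece to $\mu_\SSDVR(\{\ord f \geq n\})$ via set differences, plug in the closed form for these measures provided by Proposition \ref{MC-105}, and finally sum a geometric series in $\bbL^{-1}$.

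In more detail, by the definition of the integral and Proposition \ref{11},
$$\int_{\SSDVR_\infty}\abs{f}d\mu_\SSDVR=\sum_{n\in\bbN}\mu_\SSDVR(\{\ord f=n\})\bbL^{-n}.$$
Writing $\{\ord f=n\}=\{\ord f\geq n\}\setminus\{\ord f\geq n+1\}$ and using that these sets are stable (hence measurable) by Lemma \ref{leuven3}, I get $\mu_\SSDVR(\{\ord f=n\})=\mu_\SSDVR(\{\ord f\geq n\})-\mu_\SSDVR(\{\ord f\geq n+1\})$. By Proposition \ref{MC-105}, for $n\geq1$ this equals $c\bbL^{-n}-c\bbL^{-n-1}=c\bbL^{-n}(1-\bbL^{-1})$, where I abbreviate $c:=[\spec\f[X_0]/(f_0)]$; for $n=0$ it equals $1-c\bbL^{-1}$ since $\{\ord f\geq 0\}=\SSDVR_\infty$.

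Substituting these values gives
$$\int_{\SSDVR_\infty}\abs{f}d\mu_\SSDVR=(1-c\bbL^{-1})+c(1-\bbL^{-1})\sum_{n\geq1}\bbL^{-2n}.$$
The geometric sum $\sum_{n\geq1}\bbL^{-2n}$ converges in $\KOW$ by Proposition \ref{MC-34}, and equals $\bbL^{-2}/(1-\bbL^{-2})$ (both sides are the inverse of $\bbL^2-1$ after multiplication, and $1-\bbL^{-2}=(1-\bbL^{-1})(1+\bbL^{-1})$ is invertible since $1+\bbL^{-1}$ and $1-\bbL^{-1}$ are both units). Cancelling $1-\bbL^{-1}$ yields
$$\int_{\SSDVR_\infty}\abs{f}d\mu_\SSDVR=1-c\bbL^{-1}+\frac{c\bbL^{-2}}{1+\bbL^{-1}}=1-c\bbL^{-1}\cdot\frac{\bbL}{\bbL+1}=1-\frac{c}{\bbL+1},$$
which is the claimed formula.

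I do not expect a real obstacle: the substantive content already lives in Proposition \ref{MC-105}. The only thing to watch is that every manipulation takes place in $\KOW$, which requires checking convergence of the rearranged series (immediate from Proposition \ref{MC-34}) and the invertibility of $\bbL+1$, which follows from writing $\bbL+1=\bbL(1+\bbL^{-1})$ together with the standard geometric series for $(1+\bbL^{-1})^{-1}$ convergent by Proposition \ref{MC-34}.
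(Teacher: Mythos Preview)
Your proof is correct and follows essentially the same approach as the paper: both unfold the integral as $\sum_{n\geq0}\mu_\SSDVR(\{\ord f=n\})\bbL^{-n}$, use the set difference $\{\ord f=n\}=\{\ord f\geq n\}\setminus\{\ord f\geq n+1\}$ together with Proposition~\ref{MC-105} to evaluate each term, and then sum the resulting geometric series in $\bbL^{-1}$. The only difference is cosmetic (you factor out $c(1-\bbL^{-1})$ before summing, whereas the paper leaves the terms as $\bbL^{-m}(\bbL^{-m}-\bbL^{-(m+1)})$ inside the sum).
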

\begin{proof}
By definition we have
\begin{equation*}
\int_{\SSDVR_\infty}\abs{f}d\mu_{\SSDVR}=\sum_{m\geq0}\bb{L}^{-m}\mu_{\SSDVR}\{\ord f=m\}.
\end{equation*}
Since $\{\ord f=m\}=\{\ord f\geq m\}\setminus\{\ord f\geq m+1\}$  we have
\begin{equation*}
\mu_{\SSDVR}\{\ord f=m\}=\Bigl[\spec\tfrac{\f[X_0]}{(f_0)}\Bigr]\cdot(\bb{L}^{-m}-\bb{L}^{-(m+1)})
\end{equation*}
for $m\geq1$. For $m=0$ we have $\mu_{\SSDVR}(\ord f=0)=\mu_{\SSDVR}(\SSDVR_\infty\setminus\{\ord f\geq1\}) =1-\Bigl[\spec\tfrac{\f[X_0]}{(f_0)}\Bigr]\bb{L}^{-1}$. Therefore, using Proposition \ref{MC-34},
\begin{align*}
\int_{\SSDVR_\infty}\abs{f}d\mu_{\SSDVR}=&1+\Bigl[\spec\tfrac{\f[X_0]}{(f_0)}\Bigr]\cdot\biggl(-\bb{L}^{-1}+\sum_{m\geq1}\bb{L}^{-m}\bigl(\bb{L}^{-m}-\bb{L}^{-(m+1)}\bigr)\biggr)\\
=&1-\Bigl[\spec\tfrac{\f[X_0]}{(f_0)}\Bigr]\cdot\frac{1}{\bb{L}+1}.\qedhere
\end{align*}
\end{proof}
\begin{example}\label{MC9}
We look at the case when $\DVR=\bbZp$: If $f=aX+b$, where $a\in\bbZp^\times$, then $\tfrac{\f[X_0]}{(f_0)}=\f$. Since $[\spec\f]=1$ we have $\int_{\SSDVR_\infty}\abs{aX+b}d\mu_{\SSDVR}=\frac{\bb{L}}{\bb{L}+1}$, showing in particular that if $q$ is a power of $p$ then  $\int_{\W(\bbFq)}\abs{aX+b}dX=\frac{q}{q+1}$

More generally, assume that $f$ is such that $f_0$ is irreducible of degree $d$. Then $\spec\f[X_0]/(f_0)\simeq\bbF_{p^d}$, hence $\int_{\SSDVR_\infty}\abs{f}d\mu_{\SSDVR}=1-\tfrac{[\spec\bb{F}_{p^d}]}{\bbL+1}$. Applying $\counting_{q}$ for different powers of $p$ shows that
$$\int_{\W(\bbF_{q})}\abs{f}_pd\muH=\begin{cases}
1-d/(q+1) & q=p^i\text{ where }d\mid i\\
1 & q=p^i \text{ where } d\nmid i
\end{cases}.$$
\end{example}

\section{Change of variables}\label{MC8}
We prove three theorems about manipulation of these kind of integrals. Recall that we use $\DVR$ to denote a complete discrete valuation ring with perfect residue field $\f$.
\subsection{Linear change of variables}
A linear change of variables is easy to do also in the motivic case:
\begin{proposition}\label{MC-19}
Let $\SSDVR=\bbA^d_{\DVR}$ and let $a_{ij}\in\DVR$ be such that the determinant of $M=(a_{ij})$ is in $\DVR^\times$. Given $f\in\bbZp[X_1\dotsk X_d]$, define $g(X_1\dotsk X_d):=f\bigl((X_1\dotsk X_d)M\bigr)$. Then $\int_{\SSDVR_\infty}\abs{f}d\mu_\SSDVR=\int_{\SSDVR_\infty}\abs{g}d\mu_\SSDVR$.
\end{proposition}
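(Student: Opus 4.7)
The plan is to reduce this to the statement that $M$ induces an automorphism of the arc space which preserves each stable set of the form $\{\ord f = m\}$ (in the measure-theoretic sense), and hence preserves the integrand level-by-level.

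First I would observe that because $\det M \in \DVR^\times$, Cramer's rule gives $M^{-1} \in M_d(\DVR)$, so the $\DVR$-linear map $\phi\colon \SSDVR \to \SSDVR$ defined by $(X_1,\dots,X_d)\mapsto (X_1,\dots,X_d)M$ is an automorphism of $\bbA^d_{\DVR}$ as a $\DVR$-scheme. Taking arcs, $\phi$ induces a bijection $\phi_\infty\colon \SSDVR_\infty \to \SSDVR_\infty$, and for every $n$ an automorphism $\phi_n\colon \SSDVR_n \to \SSDVR_n$ of $\f$-schemes (in the mixed characteristic case this uses that scalar multiplication by an element of $\W(\f)$ and addition on $\W^d_n$ are polynomial operations, as recalled in the appendix; in the equal characteristic case it is immediate). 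Moreover $\pi_n \circ \phi_\infty = \phi_n \circ \pi_n$. By construction $g = f\circ \phi$, so $\ord g(\underline{a}) = \ord f(\phi_\infty(\underline{a}))$, giving
\begin{equation*}
\{\ord g = m\} = \phi_\infty^{-1}\bigl(\{\ord f = m\}\bigr).
\end{equation*}

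Next, both sets $\{\ord g \geq m\}$ and $\{\ord f \geq m\}$ are stable of level $m$ by Lemma \ref{leuven3}, hence so are their differences $\{\ord f = m\}$ and $\{\ord g = m\}$. Using $\pi_m\circ\phi_\infty^{-1} = \phi_m^{-1}\circ\pi_m$ and the fact that $\phi_m$ is an isomorphism of $\f$-schemes, we get
\begin{equation*}
[\pi_m(\{\ord g = m\})] = [\phi_m^{-1}(\pi_m(\{\ord f = m\}))] = [\pi_m(\{\ord f = m\})]
\end{equation*}
in $\KOR$, and consequently $\mu_\SSDVR(\{\ord g = m\}) = \mu_\SSDVR(\{\ord f = m\})$ for every $m \in \bbN$.

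Finally, I would invoke Proposition \ref{11} (and the definition of the integral) to write
\begin{equation*}
\int_{\SSDVR_\infty}\abs{g}\,d\mu_\SSDVR = \sum_{m\in\bbN}\mu_\SSDVR(\{\ord g = m\})\bbL^{-m} = \sum_{m\in\bbN}\mu_\SSDVR(\{\ord f = m\})\bbL^{-m} = \int_{\SSDVR_\infty}\abs{f}\,d\mu_\SSDVR,
\end{equation*}
the convergence of the sums being guaranteed by Lemma \ref{6}. The only real point requiring care is the verification that $\phi$ descends to a scheme automorphism of each truncation $\SSDVR_n$; in the Witt vector setting this rests on the polynomial nature of the ring operations on $\W_n$ and on $M^{-1}$ having $\DVR$-coefficients, and is the sole place where the hypothesis $\det M \in \DVR^\times$ (rather than merely $\det M \neq 0$) is used.
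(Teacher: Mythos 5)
Your proof is correct and follows essentially the same route as the paper: the paper likewise shows $\mu_\SSDVR\{\ord f\geq n\}=\mu_\SSDVR\{\ord g\geq n\}$ via the bijection $\underline{x}\mapsto\underline{x}M^{-1}$ (using $\det M\in\DVR^\times$), identifies the images under $\pi_n$ as isomorphic subschemes of $\SSDVR_n$, and concludes equality of the integrals term by term. Your added care about $\phi$ descending to scheme automorphisms of each truncation $\SSDVR_n$ is a fine elaboration of a step the paper leaves implicit, not a different argument.
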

\begin{proof}
We first prove that $\mu_\SSDVR\{\ord f\geq n\}=\mu_\SSDVR\{\ord g\geq n\}$ for every $n$. We have a map
$$\{\ord f\geq n\}\to\{\ord g\geq n\},$$
given by $(x_1\dotsk x_d)\mapsto(x_1\dotsk x_d)M^{-1}$. This is a bijection, for it is well defined since $g(\underline{x}M^{-1})=f(\underline{x}M^{-1}M)=f(\underline{x})\equiv0\pmod{\V^n}$, and it has a well defined inverse $\underline{x}\mapsto\underline{x}M$. Therefore $\pi_n\{\ord f\geq n\}$ and $\pi_n\{\ord g\geq n\}$ are isomorphic (viewed as subschemes of $\SSDVR_n$), consequently $[\pi_n\{\ord g\geq n\}]=[\pi_n\{\ord f\geq n\}]$.

%Let $f(\underline{x})=\bigl(f_0(\underline{x}),f_1(\underline{x}),\dots\bigr)\in\W(\overline{\f})$ and similarly for $g$. It follows that $f_i(\underline{x})=0$ if and only if $g_i(\underline{x})=0$, hence $\sqrt{(f_1\dotsk f_n)}=\sqrt{(g_1\dotsk g_n)}$ and consequently $[\pi_n\{\ord g\geq n\}]=[\pi_n\{\ord f\geq n\}]$.

It follows that $\mu_\SSDVR\{\ord f=n\}=\mu_\SSDVR\{\ord g=n\}$ for every $n$, hence that the integrals are equal.
\end{proof}

\subsection{Separation of variables}
Throughout this subsection, let $\SSDVR:=\bbA_{\DVR}^d$ and $\mathcal{Y}:=\bbA_{\DVR}^e$. Moreover, let $\mathcal{Z}:=\bbA_{\DVR}^{d+e}=\SSDVR\times_{\DVR}\mathcal{Y}$. We may then identify $(\mathcal{Z})_\infty$ with  $\SSDVR_\infty\times\mathcal{Y}_\infty$. Our aim is to show the separation of variables result, Theorem \ref{MC-15}. We do this using two partial results, which we state as the following two lemmas:

\begin{lemma}\label{30}
If $A\subset\SSDVR_\infty$ and $B\subset\mathcal{Y}_\infty$ are stable, then $A\times B\subset\mathcal{Z}_\infty$ is stable, and $\mu_{\mathcal{Z}}(A\times B)=\mu_\SSDVR(A)\mu_{\mathcal{Y}}(B)$.
\end{lemma}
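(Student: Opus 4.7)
The plan is to pick a common level of stability, show that the product is a cylinder of that level, and then read off the measure directly from the definition using multiplicativity of the Grothendieck class on products.

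First, I would choose an integer $N$ large enough so that $A$ is stable of level $N$ inside $\SSDVR_\infty$ and $B$ is stable of level $N$ inside $\mathcal{Y}_\infty$; this is possible because the defining property of being stable of level $n$ passes to all levels $\geq n$ (the truncation $\SSDVR_{m+1}\to\SSDVR_m$ is a trivial $\bbA^d$-bundle, and similarly for $\mathcal{Y}$). The core observation is that under the identification $\mathcal{Z}_\infty = \SSDVR_\infty\times\mathcal{Y}_\infty$, the truncation $\pi_N^{\mathcal{Z}}$ splits as the product $\pi_N^{\SSDVR}\times\pi_N^{\mathcal{Y}}$, simply because $\mathcal{Z}_N = \SSDVR_N\times_\f\mathcal{Y}_N$ (for $\DVR=\W(\f)$ this is $\W_N^{d+e}=\W_N^d\times_\f\W_N^e$, and for $\DVR=\psr{\f}{t}$ it is $(R[t]/(t^N))^{d+e}=(R[t]/(t^N))^d\times(R[t]/(t^N))^e$).

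From this product decomposition it follows immediately that
\[
\pi_N^{\mathcal{Z}}(A\times B) = \pi_N^{\SSDVR}(A)\times_\f\pi_N^{\mathcal{Y}}(B),
\]
and that
\[
(\pi_N^{\mathcal{Z}})^{-1}\bigl(\pi_N^{\SSDVR}(A)\times_\f\pi_N^{\mathcal{Y}}(B)\bigr)
= (\pi_N^{\SSDVR})^{-1}\pi_N^{\SSDVR}(A)\times(\pi_N^{\mathcal{Y}})^{-1}\pi_N^{\mathcal{Y}}(B)
= A\times B.
\]
This shows that $A\times B$ is a cylinder over a constructible subset of $\mathcal{Z}_N$; together with the corresponding piecewise triviality statement for higher truncations (which again follows from the product structure $\mathcal{Z}_{m+1}\to\mathcal{Z}_m$ being the product of the trivial $\bbA^d$- and $\bbA^e$-bundles coming from $\SSDVR$ and $\mathcal{Y}$), we conclude that $A\times B$ is stable of level $N$.

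For the measure computation, I would apply the definition directly and then use that $[X\times_\f Y] = [X][Y]$ in $\KOR$:
\[
\mu_{\mathcal{Z}}(A\times B)
= \bigl[\pi_N^{\mathcal{Z}}(A\times B)\bigr]\bbL^{-N(d+e)}
= \bigl[\pi_N^{\SSDVR}(A)\bigr]\bigl[\pi_N^{\mathcal{Y}}(B)\bigr]\bbL^{-Nd}\bbL^{-Ne}
= \mu_\SSDVR(A)\,\mu_{\mathcal{Y}}(B).
\]
The main obstacle, if any, is being careful that the truncation really decomposes as a product of truncations; once that is established (by inspecting the explicit description of $\mathcal{Z}_n$ in both the Witt and power series cases), everything else is formal bookkeeping with the definition of $\mu$ and the multiplicativity of the class map.
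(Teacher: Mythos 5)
Your proposal is correct and follows essentially the same route as the paper: choose a common level, observe that truncation of the product is the product of the truncations, and conclude by multiplicativity of classes in the Grothendieck ring. The only cosmetic difference is that the paper justifies the identity $[\pi_n(A\times B)]=[\pi_n(A)][\pi_n(B)]$ by explicitly writing $\pi_n(A)$ and $\pi_n(B)$ as finite unions of $\overline{\f}$-points of $\f$-varieties $V_i$, $U_j$ and expanding $\sum_{i,j}[V_i\times_\f U_j]$, a bookkeeping step you leave implicit when invoking $[X\times_\f Y]=[X][Y]$ for the (constructible) truncated images.
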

\begin{proof}
Since $A$ and $B$ are stable there is an integer $n$ with the property that there are a finite number of $\f$-varieties $V_i$ such that $\pi_n(A)=\bigcup_{i}V_i(\overline{\f})$, and a finite number of $\f$-varieties $U_i$ such that $\pi_n(A)=\bigcup_{i}U_i(\overline{\f})$. We have
\begin{align*}
\pi_n(A\times B)=&\pi_n(A)\times\pi_n(B)\\
=&\bigcup_{i}V_i(\overline{\f})\times\bigcup_j U_j(\overline{\f})\\
=&\bigcup_{i,j}V_i(\overline{\f})\times U_j(\overline{\f})\\
=&\bigcup_{i,j}(V_i\times_{\f} U_j)(\overline{\f}),
\end{align*}
hence $[\pi_n(A\times B)]=\sum_{i,j}[V_i\times_{\f} U_j]=\sum_{i,j}[V_i][U_j]=(\sum_i[V_i])(\sum_j[U_j])=[\pi_n(A)][\pi_n(B)]$. Therefore $\mu_{\SSDVR\times\mathcal{Y}}(A\times B)=[\pi_n(A\times B)]\bbL^{-n(d+e)}=([\pi_n(A)]\bbL^{-nd})([\pi_n(B)]\bbL^{-ne})=\mu_\SSDVR(A)\mu_{\mathcal{Y}}(B)$.
\end{proof}

\begin{lemma}
If $A\subset\SSDVR_\infty$ and $B\subset\mathcal{Y}_\infty$ are measurable, then $A\times B\subset\mathcal{Z}_\infty$ is measurable, and $\mu_{\mathcal{Z}}(A\times B)=\mu_\SSDVR(A)\mu_{\mathcal{Y}}(B)$.
\end{lemma}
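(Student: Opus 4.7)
The plan is to bootstrap from the stable case (the previous lemma) to the measurable case via approximation, exactly as in the classical setup. By the definition of measurability invoked in the proof of Lemma \ref{7}, there exist stable sets $A_i \subset \SSDVR_\infty$ with $\mu_\SSDVR(A) = (\mu_\SSDVR(A_i))_{i\in\bbN}$ (and analogous error-control data bounding $\mu_\SSDVR(A \symdiff A_i)$), together with analogous stable approximants $B_j \subset \mathcal{Y}_\infty$ for $B$. I would take $C_i := A_i \times B_i$, which is stable by Lemma \ref{30}.

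The first step is to show that $C_i$ is a stable approximation to $A \times B$. The key set-theoretic inclusion is
$$(A \times B) \symdiff (A_i \times B_i) \subset \bigl((A \symdiff A_i) \times \mathcal{Y}_\infty\bigr) \cup \bigl(\SSDVR_\infty \times (B \symdiff B_i)\bigr).$$
If $E_i^A \supset A \symdiff A_i$ and $E_i^B \supset B \symdiff B_i$ are the measurable error sets witnessing measurability of $A$ and $B$, then the right hand side is contained in $(E_i^A \times \mathcal{Y}_\infty) \cup (\SSDVR_\infty \times E_i^B)$. Applying Lemma \ref{30} to each piece (after reducing to stable sets via their own approximants) and using $\mu_\SSDVR(\mathcal{Y}_\infty),\mu_{\mathcal{Y}}(\SSDVR_\infty)\leq 1$ from Lemma \ref{7}, the measure of this error set is at most $\mu_\SSDVR(E_i^A) + \mu_\mathcal{Y}(E_i^B)$, which tends to zero.

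The second step is to evaluate the limit. From Lemma \ref{30}, $\mu_\mathcal{Z}(C_i) = \mu_\SSDVR(A_i)\mu_\mathcal{Y}(B_i)$. Since multiplication is continuous in $\KOW$ and $\mu_\SSDVR(A_i)\to\mu_\SSDVR(A)$, $\mu_\mathcal{Y}(B_i)\to\mu_\mathcal{Y}(B)$ by hypothesis, we obtain $\mu_\mathcal{Z}(C_i) \to \mu_\SSDVR(A)\mu_\mathcal{Y}(B)$. Combined with the vanishing symmetric difference, this proves that $A \times B$ is measurable with the claimed product measure.

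The main obstacle is purely technical: threading the precise definition of measurability from \cite{MF} (with its $\symdiff$-error control) through the product construction, and being careful that the error sets on the right hand side of the inclusion above admit stable approximations whose measures we can control by Lemma \ref{30} itself (so the argument is not circular — we only use Lemma \ref{30} on products of stable pieces). Everything else is formal manipulation using continuity of addition and multiplication in $\KOW$, together with the monotonicity furnished by Lemma \ref{7}.
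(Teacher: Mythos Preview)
Your argument is correct and follows the same strategy as the paper: approximate $A,B$ by stable sets, apply Lemma~\ref{30} to the products, and control the symmetric difference of the products by the error data for $A$ and $B$ separately. The one substantive difference is your set-theoretic bound
\[
(A\times B)\symdiff(A_i\times B_i)\subset\bigl((A\symdiff A_i)\times\mathcal{Y}_\infty\bigr)\cup\bigl(\SSDVR_\infty\times(B\symdiff B_i)\bigr),
\]
which is the right inclusion; the paper instead writes $(A\times B)\symdiff(A_m\times B_m)=(A\symdiff A_m)\times(B\symdiff B_m)$, an identity that is false in general (take $A_m=A$ but $B_m\neq B$). Your cover then yields an error bound of $u_i+v_i$ rather than the paper's $u_iv_i$, but either tends to zero, so the conclusion is unchanged. (Minor slip: you wrote $\mu_\SSDVR(\mathcal{Y}_\infty)$ and $\mu_{\mathcal{Y}}(\SSDVR_\infty)$; the subscripts should match the spaces, and in fact both measures equal~$1$.)
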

\begin{proof}
Let $A_m$ and $C_i^m$ be stable subsets of $\SSDVR_\infty$ such that $A\symdiff A_m\subset\bigcup_{i\in\bbN}C_i^m$. Let $u_m:=\sum_{i\in\bbN}\mu_\SSDVR(C_i^m)$ be convergent and $\lim_{m\to\infty}u_m=0$. Let $B_m$ and $D_i^m$ be stable subsets of $\mathcal{Y}_\infty$ such that $B\symdiff B_m\subset\bigcup_{i\in\bbN}D_i^m$, where $v_m:=\sum_{i\in\bbN}\mu_\SSDVR(D_i^m)$ is convergent and $\lim_{m\to\infty}v_m=0$. Then $(A\times B)\symdiff(A_m\times B_m)=(A\symdiff A_m)\times(B\symdiff B_m)\subset\bigcup_{i\in\bbN}C_i^m\times D_i^m$. By Lemma \ref{30}, $\mu_\mathcal{Z}(C_i^m\times D_i^m)=\mu_\SSDVR(C_i^m)\mu_\mathcal{Y}(D_i^m)$, hence by Lemma 2.21 of \cite{MF}, the sum $s_m:=\sum_{i\in\bbN}\mu_\mathcal{Z}(C_i^m\times D_i^m)$ is convergent, and $s_m\leq u_mv_m$. Since $u_m$ and $v_m$ tends to zero the same holds for $u_mv_m$ and consequently also for $s_m$. Hence, since $A_m$ and $B_m$ are stable,
\begin{equation*}
\mu_\mathcal{Z}(A\times B)=\lim_{m\to\infty}\mu_\mathcal{Z}(A_m\times B_m)=\lim_{m\to\infty}\mu_\SSDVR(A_m)\mu_\mathcal{Y}(B_m)=\mu_\SSDVR(A)\mu_{\mathcal{Y}}(B).\qedhere
\end{equation*}
\end{proof}

\begin{theorem}[Separation of variables]\label{MC-15}
Let $\SSDVR:=\bbA_{\DVR}^d$, $\mathcal{Y}:=\bbA_{\DVR}^e$ and $\mathcal{Z}=\bbA_{\DVR}^{d+e}$. If $A\subset\SSDVR_\infty$ and $B\subset\mathcal{Y}_\infty$ are measurable, and $f\in\DVR[X_1\dotsk X_d]$, $g\in\DVR[Y_1\dotsk Y_e]$, then $\int_{A\times B}\abs{fg}d\mu_{\mathcal{Z}}=\int_A\abs{f}d\mu_{\SSDVR}\cdot\int_B\abs{g}d\mu_{\mathcal{Y}}$.
\end{theorem}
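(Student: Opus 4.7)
The plan is to reduce the theorem to the product formula for measures (the preceding lemma) together with a Cauchy-product style rearrangement of the defining series of the integrals, using that $\ord$ is additive on products.

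First I would observe the key algebraic fact: for $(x,y) \in \SSDVR_\infty \times \mathcal{Y}_\infty$, since $f$ depends only on $x$ and $g$ only on $y$, and since $\ord$ is a valuation, we have $\ord(fg)(x,y) = \ord f(x) + \ord g(y)$. Consequently, for each $n \in \bbN$,
$$(A \times B) \cap \{\ord(fg) = n\} = \Disjunion_{i+j = n} \bigl(A \cap \{\ord f = i\}\bigr) \times \bigl(B \cap \{\ord g = j\}\bigr),$$
where the parts with $i = \infty$ or $j = \infty$ do not appear since $n$ is finite. By Lemma \ref{leuven3} and the preceding lemma, each factor on the right is measurable and the product measure factors as $\mu_\SSDVR(A \cap \{\ord f = i\}) \cdot \mu_\mathcal{Y}(B \cap \{\ord g = j\})$. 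Since the union is disjoint and finite, Proposition 3.7 of \cite{MF} gives
$$\mu_\mathcal{Z}\bigl((A \times B) \cap \{\ord(fg) = n\}\bigr) = \sum_{i+j=n} \mu_\SSDVR(A \cap \{\ord f = i\}) \mu_\mathcal{Y}(B \cap \{\ord g = j\}).$$

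Next I would unfold the definition of the integral on the left-hand side and substitute this identity:
$$\int_{A\times B}\abs{fg}d\mu_\mathcal{Z} = \sum_{n\in\bbN}\bbL^{-n}\sum_{i+j=n}\mu_\SSDVR(A\cap\{\ord f=i\})\,\mu_\mathcal{Y}(B\cap\{\ord g=j\}).$$
It then remains to justify the Cauchy-product rearrangement
$$\sum_{n}\bbL^{-n}\!\!\sum_{i+j=n}\mu_\SSDVR(A\cap\{\ord f=i\})\mu_\mathcal{Y}(B\cap\{\ord g=j\}) = \Bigl(\sum_{i}\mu_\SSDVR(A\cap\{\ord f=i\})\bbL^{-i}\Bigr)\Bigl(\sum_{j}\mu_\mathcal{Y}(B\cap\{\ord g=j\})\bbL^{-j}\Bigr).$$
By Lemma \ref{7}, each measure factor is bounded by $1$, so the double series over $(i,j) \in \bbN^2$ is majorised term-by-term by $\bbL^{-(i+j)}$. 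Since $i+j \to \infty$ as $(i,j)$ leaves any finite subset of $\bbN^2$, Proposition \ref{MC-34} guarantees convergence of the majorant (under any enumeration of $\bbN^2$), and Lemma 2.19 of \cite{MF} then legitimises both the interchange of summation and the factorisation of the double sum as a product of the two single series. The two resulting single sums are $\int_A \abs{f} d\mu_\SSDVR$ and $\int_B \abs{g} d\mu_\mathcal{Y}$ by definition, completing the proof.

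The main obstacle is the bookkeeping around the rearrangement in the non-ultrametric completion $\KOW$: contrary to the situation in $\KOC$, unconditional summability does not come for free, so one must explicitly produce a majorant of the form $\sum \bbL^{-e_{i,j}}$ with $e_{i,j} \to \infty$ in order to invoke Lemma 2.19 of \cite{MF}. A minor side-point to verify is that the locus where $f=0$ or $g=0$ contributes nothing on either side of the claimed identity, which follows directly from the convention $\bbL^{-\infty}=0$ used in the definition of the integral.
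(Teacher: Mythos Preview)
Your argument is correct and follows essentially the same route as the paper: decompose $\{\ord(fg)=n\}$ as a finite disjoint union over $i+j=n$, apply the product formula for measures from the preceding lemma, and then rearrange the resulting double series. The only cosmetic difference is that the paper cites Lemma~2.20 of \cite{MF} directly for the Cauchy-product rearrangement (having already secured convergence via Proposition~\ref{11}), whereas you supply an explicit majorant $\bbL^{-(i+j)}$ and invoke Lemma~2.19; both accomplish the same thing.
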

\begin{proof}
By Proposition \ref{11} the integral is convergent:
\begin{equation*}
\int_{A\times B}\abs{fg}d\mu_{\mathcal{Z}}=\sum_{\xi\in\bbN}\mu_{\mathcal{Z}}((A\times B)\cap\{\ord fg=\xi\})\bbL^{-\xi}
\end{equation*}
Since $\{\ord fg=\xi\}=\bigcup_{\mu+\nu=\xi}\{\ord f=\mu\}\times\{\ord g=\nu\}$ we have 
$$(A\times B)\cap\{\ord fg=\xi\}=\bigcup_{\mu+\nu=\xi}\bigl(A\cap\{\ord f=\mu\}\bigr)\times\bigl(B\cap\{\ord g=\nu\}\bigr)$$
and since this is a disjoint union of measurable sets it follows from the previous lemma that
$$\int_{A\times B}\abs{fg}d\mu_{\mathcal{Z}}=\sum_{\xi\in\bbN}\biggl(\sum_{\mu+\nu=\xi}\mu_\SSDVR\bigl(A\cap\{\ord f=\mu\}\bigr)\cdot\mu_{\mathcal{Y}}\bigl(B\cap\{\ord g=\nu\}\bigr)\biggr)\bbL^{-\xi}$$
Since this sum is convergent, Lemma 2.20 of \cite{MF} says that we may rearrange it to obtain
\begin{equation*}
\Biggl(\sum_{\mu\in\bbN}\mu_\SSDVR\bigl(A\cap\{\ord f=\mu\}\bigr)\bbL^{-\mu}\Biggr)\Biggl(\sum_{\nu\in\bbN}\mu_{\mathcal{Y}}\bigl(B\cap\{\ord f=\nu\}\bigr)\bbL^{-\nu}\Biggr)=\int_A\abs{f}d\mu_{\SSDVR}\cdot\int_B\abs{g}d\mu_{\mathcal{Y}}\qedhere
\end{equation*}
\end{proof}

\subsection{Multiplication by the uniformizer}\label{MC-106}
In this subsection we prove a motivic version of the change of variables induced by multiplication by the uniformizer of the discrete valuation ring.

To simplify notation, we write $\f[X_{\bullet0}\dotsk X_{\bullet N}]$ for the polynomial ring $\f[X_{i0}\dotsk X_{iN}]_{i=1}^n$. As usual, we use $Q_N$ to denote the universal polynomials defining $Q\in\DVR[X_1\dotsk X_n]$, so that $Q_N\in\f[X_{\bullet 0}\dotsk X_{\bullet N}]$. (See the discussion in the proof of Lemma \ref{leuven3}, or in the beginning of Section \ref{24}.)
\begin{lemma}\label{101}
Let $\SSDVR=\bbA^n_{\DVR}$. Let $Q\in\DVR[X_1\dotsk X_n]$ be a form of degree $s$. If $\ord x_i>0$ for $i=1\dotsk n$, then $\ord Q\geq s$. Moreover for every $\xi\in\bbN$
$$\mu_{\SSDVR}(\{\ord Q>\xi+s,\ord x_i>0\}_{i=1}^n)=\bbL^{-n}\mu_{\SSDVR}(\{\ord Q>\xi\}).$$
\end{lemma}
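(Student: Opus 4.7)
The first statement is immediate from the ultrametric property of $\ord$: writing $Q=\sum_{\abs{\alpha}=s}c_\alpha X^\alpha$ with $c_\alpha\in\DVR$, any $x$ with $\ord x_i\geq 1$ for all $i$ satisfies $\ord(c_\alpha x^\alpha)\geq\sum_i\alpha_i\cdot\ord x_i\geq s$, whence $\ord Q(x)\geq s$.

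For the main statement, the plan is to evaluate both measures at a single finite level $m\geq\xi+s+1$ (by Lemma \ref{leuven3}, the left-hand set is stable of level $m$ and the right-hand set of level $m-1$). The scheme $\pi_m(\{\ord x_i>0\}_{i=1}^n)$ is the closed subscheme of $\SSDVR_m\cong\bbA_\f^{nm}$ cut out by $X_{i,0}=0$, so it is isomorphic to $\bbA_\f^{n(m-1)}$. The renaming $Y_{i,j}:=X_{i,j+1}$ for $j=0,\dotsk m-2$ then gives a Verschiebung-type $\f$-scheme isomorphism $\psi\colon\SSDVR_{m-1}\to\pi_m(\{\ord x_i>0\}_{i=1}^n)$, which on $\overline{\f}$-points is $(y_1,\dotsk y_n)\mapsto(\V y_1,\dotsk\V y_n)$ (with $\V$ the Verschiebung in the mixed characteristic case and multiplication by $t$ in the equal characteristic case).

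The next step is to compute how $\psi$ interacts with $Q$. The equal characteristic case is immediate from $Q(ty_1,\dotsk ty_n)=t^sQ(y_1,\dotsk y_n)$. In the Witt vector case, I would first prove, by induction on $s$ using $\V a\cdot b=\V(a\F b)$, $\F\V=p$, and the homogeneity of $Q$, the identity
$$Q(\V y_1,\dotsk\V y_n)=p^{s-1}\V Q(y_1,\dotsk y_n).$$
Since $\W(\overline{\f})$ is perfect of characteristic $p$, a direct check on Witt coordinates shows that $p^{s-1}\V=\V^s\F^{s-1}$ as operators: both send $z$ to the vector with $s$ leading zeros followed by $z_0^{p^{s-1}},z_1^{p^{s-1}},\dots$. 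Combined with the first part of the lemma, this yields at the level of universal polynomials $Q_j|_{X_{i,0}=0}=0$ for $j<s$ and $Q_{s+k}|_{X_{i,0}=0}=Q_k^{p^{s-1}}$ for $k\geq 0$, where on the right $Q_k$ is viewed as a polynomial in the $Y_{i,j}$ (and the exponent $p^{s-1}$ is to be read as $1$ in the equal characteristic case).

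Under $\psi$ the subscheme $\pi_m(\{\ord x_i>0,\ord Q>\xi+s\}_{i=1}^n)$ is therefore identified with $\spec\f[Y_{i,j}]/(Q_0^{p^{s-1}},\dotsk Q_\xi^{p^{s-1}})$, which has the same underlying reduced scheme as $\pi_{m-1}(\{\ord Q>\xi\})=\spec\f[Y_{i,j}]/(Q_0,\dotsk Q_\xi)$; hence they define the same class in $\KO(\catvar_\f)$. Plugging into the definition of $\mu_{\SSDVR}$,
$$\mu_{\SSDVR}(\{\ord Q>\xi+s,\ord x_i>0\}_{i=1}^n)=[\pi_{m-1}(\{\ord Q>\xi\})]\,\bbL^{-nm}=\bbL^{-n}\,\mu_{\SSDVR}(\{\ord Q>\xi\}).$$
The principal obstacle is establishing the Witt-vector formula $Q(\V y)=\V^s\F^{s-1}Q(y)$; once this is in hand, the passage to reduced subschemes absorbs the resulting $p^{s-1}$-th power in $\KO(\catvar_\f)$ and the remainder is bookkeeping.
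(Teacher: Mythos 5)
Your route is the same as the paper's: truncate at a fixed level, identify $\pi_m(\{\ord x_i>0\}_{i=1}^n)$ with $\SSDVR_{m-1}$ via the shift of coordinates, use a Witt-vector homogeneity identity to relate the universal polynomials, and let $[X]=[X_{red}]$ absorb the $p^{s-1}$-th powers. The one genuine problem is your key identity $Q(\V y_1\dotsk\V y_n)=p^{s-1}\V Q(y_1\dotsk y_n)$: it is false for general $Q\in\W(\f)[X_1\dotsk X_n]$. Already in the base case $s=1$, $Q=cX$, the rule $\bff{a}\cdot\V\bff{b}=\V(\F\bff{a}\cdot\bff{b})$ gives $c\cdot\V y=\V(\F c\cdot y)$, which equals $\V(cy)$ only if $\F c=c$; so your induction only goes through when all coefficients of $Q$ are fixed by Frobenius (e.g.\ $\DVR=\bbZp$), whereas the lemma allows any perfect $\f$. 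The correct statement is the paper's Corollary \ref{co:45}: $Q(\V y_1\dotsk\V y_n)=\F^{s-1}\V^s(\F Q)(y_1\dotsk y_n)$, with $\F$ applied to the coefficients of $Q$. (Your operator identity $p^{s-1}\V=\V^s\F^{s-1}$ is fine, though it is $\overline{\f}$, not $\W(\overline{\f})$, that is perfect of characteristic $p$.)

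With the corrected identity your universal-polynomial relation becomes $Q_{s+k}|_{X_{\bullet0}=0}=\bigl((\F Q)_k\bigr)^{p^{s-1}}$, so after passing to reduced schemes you land on the zero locus of the $(\F Q)_k$, which is the base change of $\pi_{m-1}(\{\ord Q>\xi\})$ along the Frobenius automorphism of $\f$, not that scheme itself. To conclude you still have to argue that the two classes in $\KOR$ agree; this is immediate when $\F$ fixes the coefficients of $Q$ (in particular for $\f=\bbFp$, the case driving the paper), and it is exactly the point that the paper's own proof of Lemma \ref{101} passes over silently when it writes $Q_0(X_{\bullet1})\dotsk Q_{\xi}(X_{\bullet1}\dotsk X_{\bullet\xi+1})$ after invoking Corollary \ref{co:45}. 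So: repair the identity (or restrict to Frobenius-fixed coefficients) and add a word about this twist. The rest of your argument --- the ultrametric first part, stability levels, the $\bbL^{-n}$ bookkeeping, and the equal-characteristic case --- is correct and coincides with the paper's proof.
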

\begin{proof}
For $N$ sufficiently large, $\pi_{N+1}(\ord \Delta>\xi+s,\ord x_i>0)$ is the spectrum of the algebra
\begin{equation*}
\frac{\f[X_{\bullet0}\dotsk X_{\bullet N}]}{\bigl(Q_0\dotsk Q_{\xi+s},X_{\bullet0}\bigr)}.
\end{equation*}
In the mixed characteristic case, it follows from Corollary \ref{co:45} that the class of this in $\KOR$ equals (since $[X]=[X_{red}]$ for any scheme $X$) the class of the spectrum of
\begin{equation*}
\frac{\f[X_{\bullet1}\dotsk X_{\bullet N}]}{\bigl(Q_0(X_{\bullet1})\dotsk Q_{\xi}(X_{\bullet1}\dotsk X_{\bullet\xi+1})\bigr)}.
\end{equation*}
In the equal characteristic case, this is straight forward to prove. Now, using the change of variables $X_{\bullet i}\mapsto X_{\bullet i-1}$, the spectrum of this algebra is $\pi_N(\ord Q>\xi)$. The result follows.
\end{proof}

\begin{theorem}\label{103}
Let $\SSDVR=\bbA^n_{\DVR}$, let $Q\in\DVR[X_1\dotsk X_n]$ be a form of degree $s$ and let $A=\{\ord x_i>0\}_{i=1}^n\subset\SSDVR_\infty$. Then $\int_A\abs{Q}d\mu_\SSDVR=\bbL^{-s-n}\int_{\SSDVR_\infty}\abs{Q}d\mu_\SSDVR$.
\end{theorem}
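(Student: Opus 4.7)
The plan is to reduce Theorem \ref{103} directly to Lemma \ref{101}, which already packages the essential geometric content (multiplication by the uniformizer, which sends $\SSDVR_\infty$ bijectively onto $A$ and shifts $\ord Q$ by $s$ because $Q$ is homogeneous of degree $s$, at the cost of a factor $\bbL^{-n}$ in the measure).

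First I would expand the integral by its definition as a sum over level sets:
\begin{equation*}
\int_A\abs{Q}d\mu_\SSDVR=\sum_{m\in\bbN}\mu_\SSDVR\bigl(A\cap\{\ord Q=m\}\bigr)\bbL^{-m},
\end{equation*}
which exists by Proposition \ref{11}. The first assertion of Lemma \ref{101} shows that $A\cap\{\ord Q=m\}=\emptyset$ whenever $m<s$, so the sum really runs over $m\geq s$, and I can substitute $m=k+s$ with $k\geq 0$.

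Next I would rewrite each level set measure as the difference of two ``$\geq$''-set measures, $\mu_\SSDVR\bigl(A\cap\{\ord Q=k+s\}\bigr)=\mu_\SSDVR\bigl(A\cap\{\ord Q\geq k+s\}\bigr)-\mu_\SSDVR\bigl(A\cap\{\ord Q\geq k+s+1\}\bigr)$. The second assertion of Lemma \ref{101} (applied with $\xi=k-1$ for $k\geq 1$, plus the direct observation that $\mu_\SSDVR(A)=\bbL^{-n}$ for the boundary case $k=0$, which handles $\{\ord Q\geq s\}\cap A=A$) gives
\begin{equation*}
\mu_\SSDVR\bigl(A\cap\{\ord Q\geq k+s\}\bigr)=\bbL^{-n}\mu_\SSDVR\bigl(\{\ord Q\geq k\}\bigr),
\end{equation*}
so the difference collapses to $\bbL^{-n}\mu_\SSDVR(\{\ord Q=k\})$. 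Substituting back yields
\begin{equation*}
\int_A\abs{Q}d\mu_\SSDVR=\bbL^{-s-n}\sum_{k\in\bbN}\mu_\SSDVR\bigl(\{\ord Q=k\}\bigr)\bbL^{-k}=\bbL^{-s-n}\int_{\SSDVR_\infty}\abs{Q}d\mu_\SSDVR.
\end{equation*}

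The only real issue is the boundary case $k=0$, where Lemma \ref{101} as stated applies for $\xi\in\bbN$ giving the identity for $k\geq 1$; this is handled by the direct computation $\mu_\SSDVR(A)=\bbL^{-n}$ (which one sees by noting that $\pi_1(A)$ is cut out by the equations $X_{\bullet 0}=0$, so $\pi_N(A)\simeq\bbA_\f^{n(N-1)}$ for $N\geq 1$). Since all the analytic work — including convergence of the rearranged sum — is already subsumed by Proposition \ref{11} and Lemma \ref{101}, no further appeals to convergence lemmas are needed.
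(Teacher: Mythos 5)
Your proof is correct and takes essentially the same route as the paper: expand the integral over the level sets of $\ord Q$, use the first part of Lemma \ref{101} to start the sum at $s$, and use its second part to replace $\mu_\SSDVR\bigl(A\cap\{\ord Q=k+s\}\bigr)$ by $\bbL^{-n}\mu_\SSDVR\bigl(\{\ord Q=k\}\bigr)$. Your extra care with the boundary case $k=0$ (via $\mu_\SSDVR(A)=\bbL^{-n}$) and with translating the strict-inequality form of the lemma into level-set measures merely spells out what the paper's proof leaves implicit.
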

\begin{proof}
By the first part of Lemma \ref{101}, $\{\ord Q=\xi,\ord x_i>0\}_{i=1}^n=\emptyset$ for $\xi<s$, hence
\begin{equation*}
\int_{A}\abs{Q}d\mu_\SSDVR =\sum_{\xi\geq0}\mu_\SSDVR\{\ord Q=\xi+s,\ord x_i>0\}\bbL^{-(\xi+s)}.
\end{equation*}
Using the second part of the lemma it follows that this equals
\begin{equation*}
\sum_{\xi\geq0}\bbL^{-n}\mu_\SSDVR(\ord Q=\xi)\bbL^{-(\xi+s)}=\bbL^{-s-n}\int_{\SSDVR_\infty}\abs{Q}d\mu_\SSDVR.\qedhere
\end{equation*}
\end{proof}

\section{The motivic integral of the absolute value of a product of linear forms}\label{MC3}

As before, we let $\DVR$ be a complete discrete valuation ring with residue field $\f$, of one of the types described in Subsection \ref{MC-103}. We define, for any $n\in\bbN$, $\SSDVR^n=\bbA^n_{\DVR}$, and we let $\SSDVR_\infty^n$ be its space of arcs.

The main result of this section is Theorem \ref{mainMC}, and also Theorem \ref{leuven1}. In Theorem \ref{mainMC} we give a recursive method to compute $I=\int_{\SSDVR^n_\infty}\abs{\prod\ell_i}d\mu_{\SSDVR^n}$, where $\ell_i\in\bbZ[X_1\dotsk X_n]$ are linear forms. When $\DVR=\psr{\f}{t}$, these forms are arbitrary; when $\DVR=\W(\f)$ we need the forms to be of a rather special type. The restriction in the second case is taken care of in Theorem \ref{leuven1}, when we give a recursion that works for general forms in case $\DVR=\W(\f)$, provided that the characteristic of $\f$ is sufficiently large.

When applicable, these theorems also give a function $f\in\bbZ(T)$ such that $I=f(\bbL)$. If we let $\DVR=\bbZp$ this also gives a motivic explanation to the phenomenon discussed in the introduction. For by applying $\counting_q$ to $I$, for different powers $q$ of $p$, we get $\int_{\W(\bbFq)}\abs{\prod\ell_i}d\muH=f(q)$.

\begin{remark}
As mentioned in the introduction, it is not true in general that the motivic integral of the absolute value of a polynomial is equal to $f(\bbL)$, with $f\in\bbZ(T)$. This can be seen from the integral $\int_{\SSDVR^1_\infty}\abs{x^2+1}d\mu_{\SSDVR^1}$: When $\DVR=\bbZp$ with $p\equiv 3\mod 4$, then by Example \ref{MC9} this integral is equal to $1-[\spec\bb{F}_{p^2}]/(\bbL+1)$, and by applying the point counting homomorphism for different powers of $p$ we see that this cannot be equal to $f(\bbL)$ for $f\in\bbZ(T)$.
\end{remark}
%\begin{remark}
%The integrals that motivated us to perform these computations, those comming from Weyl groups, can all be computed using Theorem %\ref{mainMC}.
%\end{remark}

Let us mention one remaining question about these integrals: From the computations performed in Theorem \ref{mainMC} and \ref{leuven1} it is clear that $f\in\bb{Z}(T)$, the rational function with the property that $\int_{\SSDVR^n_\infty}\abs{\prod\ell_i}d\mu_{\SSDVR^n}=f(\bbL)$, is independent of $\DVR$, provided that we choose $\DVR$ among the rings $\set{\W(\f)}{p\text{ sufficiently large}}\cup\{\psr{\f}{t}\}$. In particular, we have $\int_{\bbZp^n}\abs{\prod\ell_i}d\muH=f(p)$ for $p$ big enough. It would be desirable to have a motivic explanation also for this fact. This can probably be achieved using the theory of motivic integration developed in \cite{MR2403394}. Alternatively, we indicate in the following remark how the problem could be handled using geometric motivic integration.
\begin{remark}
By Theorem 6.1 of \cite{MR1905328}, if $\DVR=\psr{\bbQ}{t}$, if $\mathcal{Y}=\{P=0\}\subset\SSDVR^n$ where $P$ is a polynomial, and if $J(T)=\sum_{i\geq0}[\mathcal{Y}_{i+1}]T^i\in\psr{\KOLunmarked_{\bbQ}}{T}$, then the following holds: Firstly, $J(T)$ is rational, with denominator consisting of factors of the form $1-\bbL^aT^b$. Moreover, if we choose representatives for the coefficients of $J(T)$, defined over $\bbZ$, and then count $\bbFp$-points on them, then for $p$ sufficiently large we get the power series $J_p(T)=\sum_{i\geq0}\abs{\set{\underline{x}\in(\bbZ/(p^{i+1}))^n}{P(\underline{x})=0}}T^i$.

The process of choosing representatives for elements of $\KOLunmarked_{\bbQ}$, and then counting $\bbFp$-points on them for all $p$, defines a homomorphism $\counting\colon\KOLunmarked_{\bbQ}\to\prod_{p}\bbQ/{\sim}$, where $(a_p)$ and $(b_p)$ are equivalent if $a_p=b_p$ for almost all $p$. (The filter product with respect to the Fr\'echet filter.) Since $\int_{\bbZp^n}\abs{P}d\muH=1+p^{-n-1}(1-p)J_p(p^{-1-n})$, one could define the motivic integral of $P$ by first computing $J$ as a rational function, and then define the integral to be $I=1+\bbL^{-n-1}(1-\bbL)J(\bbL^{-1-n})\in\KOLunmarked_{\bbQ}[(1-\bbL^i)^{-1}]_{i\geq1}$. This integral then has the property that $\counting I=\bigl(\int_{\bbZp^n}\abs{P}d\muH\bigr)_p\in\prod_{p}\bbQ/{\sim}$. For example, let $P=X^2+1$, and let $m=[\spec\bbQ[X]/(P(X))]$. Using Theorem \ref{MC-105} one sees that $J(T)=m/(1-T)$, so the integral of $P$ is $1-m/(\bbL+1)$. Hence, for $p$ sufficiently large the value of $\int_{\bbZp}\abs{P}d\muH$ is $1$ if $p\equiv1\mod 4$ and $(p-1)/(p+1)$ if $p\equiv3\mod 4$ (a result that of course is true for all $p$). Probably the method used to prove Theorem \ref{mainMC} can be used also to compute $J(T)$ when $P$ is a product of linear forms, showing that the integral equals $f(\bbL)$, hence that $\int_{\bbZp^n}\abs{\prod\ell_i}d\muH=f(p)$ for $p$ big enough.
\end{remark}

We now give an example showing that $f$ is not independent of $p$ for all $p$, only for $p$ sufficiently large. For fix a prime $l$. Let $p$ be any prime different from $l$, and let $\SSDVR^2=\bbA^2_{\bbZp}$. Then, using Proposition \ref{MC-19}, Proposition \ref{MC-15}, and Proposition \ref{MC-21}, we see that 
$$\int_{\SSDVR^2_\infty}\abs{(x_1+x_2)(x_1-(l-1)x_2)}d\mu_{\SSDVR^2}=\int_{\SSDVR^2_\infty}\abs{y_1y_2}d\mu_{\SSDVR^2}=\Bigl(\int_{\SSDVR^1_\infty}\abs{y}d\mu_{\SSDVR^1}\Bigr)^2=\tfrac{\bbL^2}{(\bbL+1)^2}.$$
If this formula were true for $p=l$, then it would follow that $\int_{\bbZp^2}\abs{(x_1+x_2)(x_1-(p-1)x_2)}dx_1dx_2=\tfrac{p^2}{(p+1)^2}$, contradicting the following example.
\begin{example}
Consider the linear mapping $(x_1,x_2)\mapsto(x_1+x_2,x_1-(p-1)x_2)\colon\bbZp^2\to\bbZp^2$. It is easy to check that it is injective, that its image is $\bigcup_{a=0}^{p-1}(a+p\bbZp)^2$, and that its Jacobian is constant of absolute value $1/p$. Hence
$$\int_{\bbZp^2}\abs{(x_1+x_2)(x_1-(p-1)x_2)}dx_1dx_2=p\sum_{a=0}^{p-1}\int_{(a+p\bbZp)^2}\abs{y_1y_2}dy_1dy_2.$$
Now if $a\neq0$, $\int_{(a+p\bbZp)^2}\abs{y_1y_2}dy_1dy_2=(\int_{a+p\bbZp}dy)^2=1/p^2$, whereas $\int_{(p\bbZp)^2}\abs{y_1y_2}dy_1dy_2=(\int_{p\bbZp}\abs{y}dy)^2=(\int_{\bbZp}\abs{y}dy-\int_{\bbZp^\times}dy)^2=(p/(p+1)-(p-1)/p)^2=1/(p(p+1))^2$, hence $$\int_{\bbZp^2}\abs{(x_1+x_2)(x_1-(p-1)x_2)}dx_1dx_2=\tfrac{p^2+p-1}{(p+1)^2}.$$
\end{example}

%That $f$ is not independent of $p$ for all $p$, only for $p$ sufficiently large, can be shown as follows: Using either the method of Theorem \ref{mainMC} or \ref{leuven1} one shows that if $p>2$ then $$\int_{\SSDVR^2_\infty}\abs{(x_1-x_2)(x_1+x_2)}d\mu_{\SSDVR^2}=\tfrac{\bbL^2}{(\bbL+1)^2}.$$
%If this formula were true for $p=2$, then it would follow that $\int_{\bbZt^2}\abs{(x_1-x_2)(x_1+x_2)}dx_1dx_2=\tfrac{4}{9}$, contradicting the following example.
%\begin{example}
%Consider the linear mapping $(x_1,x_2)\mapsto(x_1-x_2,x_1+x_2)\colon\bbZt^2\to\bbZt^2$. It is easy to check that it is injective, that its image is $(2\bbZt)^2\cup(\bbZt^\times)^2$, and that its Jacobian is constant of absolute value $1/2$. Hence
%$$\int_{\bbZt^2}\abs{(x_1-x_2)(x_1+x_2)}dx_1dx_2=2\int_{(2\bbZt)^2\cup(\bbZt^\times)^2}\abs{y_1y_2}dy_1dy_2.$$
%Now $\int_{(\bbZt^\times)^2}\abs{y_1y_2}dy_1dy_2=(\int_{\bbZt^\times}\abs{y}dy)^2=1/4$, whereas $\int_{(2\bbZt)^2}\abs{y_1y_2}dy_1dy_2=(\int_{2\bbZt}\abs{y}dy)^2=(\int_{\bbZt}\abs{y}dy-1/2)^2=(2/3-1/2)^2=1/36$, hence $$\int_{\bbZt^2}\abs{(x_1-x_2)(x_1+x_2)}dx_1dx_2=\tfrac{5}{9}.$$
%\end{example}

\subsection{The crucial step in the recursion}
The main result of this subsection is a change of variables result, Theorem \ref{42}, which is the crucial step in performing the recursion of Theorem \ref{mainMC}. However, since the motivic recursion is inspired by a $p$-adic computation, but requires a different, more complicated, method, we begin by briefly discuss the $p$-adic method, and why it does not translate to the motivic case. This is done in the following example:
\begin{example}
%In the $p$-adic version of the recursion, whose motivic version is performed in Theorem \ref{mainMC}, one gets integrals over subsets of $\bbZp^d$ that are rather complicated. 
%Theorem \ref{42} is based on the $p$-adic computation illustrated in the following example:
Suppose we want to compute $I=\int_A\abs{\alpha_1-\alpha_2}d\underline{\alpha}$, where we integrate over the set $A\subset\bbZp^3$ of tuples such that $\alpha_1\equiv\alpha_2\mod p$ and $\alpha_1\not\equiv\alpha_3\mod p$. One way to do this is to express the integral in terms of an integral of the same function, but now integrated over all of $\bbZp^3$. This is what is done for general integrals, and in the motivic setting in Theorem \ref{42}. We illustrate this method on the integral $I$: Choosing a set of representatives of the cosets of $p\bbZp$, $x\mapsto\tilde{x}\colon\bbFp\to\bbZp$, we may write each such $\alpha_i$ uniquely as $\alpha_i=\tilde{x}_i+p\beta_i$. Moreover, $\alpha_i\equiv \alpha_j\mod p$ if and only if $x_i=x_j$ in $\bbFp$. Hence 
$$I=\sum_{\substack{\underline{x}\in\bbFp^3:\\x_1=x_2\neq x_3}}\int_{\alpha_i=\tilde{x}_i+p\beta_i}\abs{\alpha_i-\alpha_j}d\underline{\alpha}.$$
On each of these integrals we perform the change of variables $\alpha_i=\tilde{x}_i+p\beta_i$ (the $x_i$ are fixed). The Jacobian of this has absolute value $p^{-3}$, hence each of the integrals in the sum equal $p^{-3}\int_{\bbZp^3}\abs{p\beta_1-p\beta_2}d\underline{\beta}$. (This step do not generalize to arbitrary linear forms, since then the $\tilde{x}_i$ will not cancel.) Since there are $p(p-1)$ terms in the sum, we find that $I=p(p-1)p^{-3-1}\int_{\bbZp^3}\abs{\beta_1-\beta_2}d\underline{\beta}$.

The main problem in translating this to the motivic setting is that it is not possible to partition the integration set in this manner. However, this partition is not visibly in the result of the computation, so the result might still be possibly to translate into the motivic setting. This is done in Theorem \ref{42}. Also, note that in that theorem we work in the most general case possibly, contrary to in this example.
\end{example}

To prove Theorem \ref{42} we first needs two lemmas about the Witt vectors. See Appendix \ref{MC7} for notation used in connection with the Witt vectors.
\begin{lemma}\label{MC6}
Let $\f$ be a perfect field of characteristic $p$, where $p$ is a prime different from $2$, and let $\ell=aX+bY\in\W(\f)[X,Y]$ be a linear form in two variables whose coefficients are multiplicative representatives. For $x=(x_0,x_1,\dots)\in\W(A)$, where $A$ is a $\f$-algebra, define $\tilde{x}=(x_1,x_2,\dots)\in\W(A)$. Let $y\in\W(A)$ and define $\tilde{y}$ similarly. Suppose that $\ell(x,y)\equiv0\mod \V$. Then $\ell(x,y)=\V\ell(\tilde{x},\tilde{y})$. If $p=2$ the result holds provided $\ell=X-Y$.
\end{lemma}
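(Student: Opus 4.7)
The plan is to decompose $x = \R(x_0) + \V \tilde x$ (an instance of the canonical Witt-vector decomposition \eqref{41}) and similarly for $y$, then substitute into $\ell(x,y)=ax+by$. Writing $a = \R(\alpha)$ and $b = \R(\beta)$, multiplicativity of $\R$ gives $a\R(x_0)=\R(\alpha x_0)$, and bilinearity yields
\begin{equation*}
\ell(x,y) = \bigl[\R(\alpha x_0) + \R(\beta y_0)\bigr] + \bigl[a\V\tilde x + b\V\tilde y\bigr].
\end{equation*}
I will prove separately that the first bracket vanishes and that the second equals $\V\ell(\tilde x,\tilde y)$.

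For the second bracket I invoke the standard projection formula $z\cdot\V(w) = \V(\F(z)\cdot w)$. Because $\F(\R(\alpha))=\R(\alpha^p)=\R(\alpha)=a$ whenever the coefficient is a multiplicative representative fixed by $\F$ (in particular for integer coefficients), this gives $a\V\tilde x = \V(a\tilde x)$, and $\V$-additivity then combines the pieces: $a\V\tilde x + b\V\tilde y = \V(a\tilde x + b\tilde y) = \V\ell(\tilde x,\tilde y)$. For the first bracket, the hypothesis $\ell(x,y)\equiv 0\pmod{\V}$ says precisely that the zeroth Witt component $\alpha x_0+\beta y_0$ vanishes in $A$; so $\beta y_0 = -\alpha x_0$ and multiplicativity of $\R$ gives $\R(\beta y_0)=\R(-1)\R(\alpha x_0)$. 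The key identity is that for $p$ odd, $\R(-1)=-1$ in $\W(A)$: the integer $-1$ is fixed by $\F$ and reduces to $-1\pmod{\V}$, so it coincides with the unique Teichm\"uller lift of $-1\in A$. Hence the first bracket collapses to $\R(\alpha x_0)-\R(\alpha x_0) = 0$, completing the odd-$p$ case.

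The case $p=2$ with $\ell=X-Y$ is handled directly. The coefficients $1,-1$ are ordinary integers of $\W(A)$, hence fixed by $\F$ and commuting with $\V$ in the sense $a\V(z)=\V(az)$. The decomposition then gives
\begin{equation*}
x-y = \bigl(\R(x_0)-\R(y_0)\bigr) + \V(\tilde x - \tilde y),
\end{equation*}
and the hypothesis forces $x_0=y_0$ in $A$, so $\R(x_0)=\R(y_0)$ and the non-$\V$ summand vanishes. The main obstacle is the pair of identities $\R(-1)=-1$ and $a\V(z)=\V(az)$ for multiplicative-representative $a$; both rely on the Frobenius-invariance of the coefficients, which fails for the coefficient $-1$ when $p=2$ (there $\R(-1)=\R(1)=1\neq -1$ in $\W(A)$), and this is precisely why the $p=2$ case restricts to $\ell=X-Y$ and must be argued separately rather than via the generic multiplicative-representative formalism.
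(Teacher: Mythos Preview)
Your approach is essentially the paper's: decompose $x = \R(x_0) + \V\tilde x$, use multiplicativity of $\R$ for the constant part and the projection formula $z\cdot\V w = \V(\F z\cdot w)$ for the $\V$-part. Two points deserve comment.

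Your justification of $\R(-1)=-1$ is not quite a proof. Being fixed by $\F$ and reducing to $-1\pmod\V$ does not single out the Teichm\"uller lift: over $\bbFp$, for instance, $\F$ is the identity on $\W(\bbFp)=\bbZp$, so \emph{every} element is $\F$-fixed. The paper instead uses the ghost map: when $p$ is invertible $W_*$ is an isomorphism and $W_*\R(1)+W_*\R(-1) = \bigl(1+(-1)^{p^n}\bigr)_n = 0$ for $p$ odd, whence $\R(-1)=-1$ universally. A shorter fix for your write-up: $\R(-1)\in\bbZp$ is a $(p-1)$st root of unity reducing to $-1$, hence equals $-1$; then push forward along $\bbFp\to A$.

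You are right to flag that $a\V\tilde x=\V(a\tilde x)$ needs $\F a=a$, i.e.\ $\alpha\in\bbFp$, which is not in the stated hypothesis. In fact the lemma as stated is false without it: with $\f=\bbF_{p^2}$, $\alpha\in\f\setminus\bbFp$, $\ell=\R(\alpha)X$, and $x=\V\R(1)$, one gets $\ell(x)=\V\R(\alpha^p)\neq\V\R(\alpha)=\V\ell(\tilde x)$. The paper's own proof makes the same tacit assumption at the step $\V(\F a\cdot\tilde x)+\V(\F b\cdot\tilde y)=\V\ell(\tilde x,\tilde y)$. In all of the paper's applications the coefficients lie in $\bbZp$, where $\F$ acts trivially, so the restriction is harmless there; but strictly speaking neither your argument nor the paper's establishes the lemma in the generality stated.
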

\begin{proof}
When $p\neq2$ we have for every ring $A$ that $-1=-\R(1)=\R(-1)\in\W(A)$. This follows in the standard way by first proving it when $p$ is invertible in $A$. In this case $W^*$ is an isomorphism, and since $W^*\R(x)=(x,x^p,x^{p^2},\dots)$ we have $W^*\R(1)+W^*\R(-1)=0$ so the result follows. But if it this result is true for a ring $A$, it holds for every sub- and quotient ring, hence for every ring. 

Let $a=\R a_0$ and $b=\R b_0$. The condition $\ell(x,y)\equiv0\mod \V$ then means that $a_0x_0+b_0y_0=0$. From what was said above it follows that $a\R x_0+b\R y_0=\R(a_0x_0)+\R(b_0y_0)=\R(a_0x_0)+\R(-a_0x_0)=0$. We then have
\begin{align*}
\ell(x,y)=&\ell(\R x_0+\V\tilde{x},\R y_0+\V\tilde{y})\\
=&(a\R x_0+b\R y_0)+(a\V\tilde{x}+b\V\tilde{y})\\
=&\V(\F a\cdot\tilde{x})+\V(\F b\cdot\tilde{y})\\
=&\V\ell(\tilde{x},\tilde{y}).
\end{align*}
When $p=2$ we may still prove the result when $\ell=X-Y$, for if $\ell(x,y)\equiv0\mod\V$ then $x_0=y_0$, hence $\R(x_0)=\R(y_0)$.
\end{proof}

Throughout this section, we continue to use the notation of Subsection \ref{MC-106}: We write $\f[X_{\bullet0}\dotsk X_{\bullet N}]$ for the polynomial ring $\f[X_{i0}\dotsk X_{iN}]_{i=1}^n$, and use $Q_N$ to denote the universal polynomials defining $Q\in\DVR[X_1\dotsk X_n]$.

\begin{lemma}\label{102}
Fix a perfect field $\f$ of characteristic $p$. Let $P=\prod_{i=1}^d\ell_i\in\W(\f)[X_1\dotsk X_n]$, where the $\ell_i$ are linear forms, all of whose coefficients are multiplicative representatives, at most two of which are non-zero. Moreover, if $p=2$, assume that all the forms are of the type $X_i-X_j$. Let $x_1\dotsk x_n\in \W(A)$, where $A$ is a $\f$-algebra, be such that $\ell_i(x_1\dotsk x_n)\equiv0\mod\V$ for every $i$, and define $\tilde{x_i}$ as in Lemma \ref{MC6}. Then
$$P(x_1\dotsk x_n)=\F^{d-1}\V^dP(\tilde{x_1}\dotsk\tilde{x_n}).$$ 
In particular, working in $\W(\f[X_{\bullet n}]_{n\in\bbN})$, we have $P_\xi=0$ for $\xi<d$, and $P_{\xi+d}=P_\xi(X_{\bullet 1}\dotsk X_{\bullet\xi+1})^{p^{d-1}}$ for $\xi\in\bbN$.
\end{lemma}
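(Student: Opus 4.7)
The plan is to apply Lemma \ref{MC6} to each linear factor $\ell_i$ and then combine the resulting Verschiebungs into a single $\F^{d-1}\V^d$ by pushing all the Frobenii out using standard Witt-vector identities. The hypotheses of Lemma \ref{MC6} are met by each $\ell_i$: its coefficients are multiplicative representatives with at most two nonzero, $\ell_i(x_1,\dotsk,x_n)\equiv0\pmod{\V}$ holds by assumption, and the $p=2$ restriction to forms $X_j-X_k$ matches what we have assumed. Thus for each $i$,
\begin{equation*}
\ell_i(x_1,\dotsk,x_n) = \V\ell_i(\tilde{x_1},\dotsk,\tilde{x_n}).
\end{equation*}

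The key algebraic step is then the identity
\begin{equation*}
\prod_{i=1}^d \V(a_i) = \F^{d-1}\V^d\Bigl(\prod_{i=1}^d a_i\Bigr)
\end{equation*}
in $\W(A)$, which I would deduce from the universal Witt-vector relations $\V(a)\cdot b = \V\bigl(a\,\F(b)\bigr)$ and $\F\V(x)=p\,x$ (both recorded in the appendix). Iterating the first relation with $\F\V = p$ gives $\V(a)\V(b) = \V(p\,a\,b)$, and induction on $d$ extends this to $\prod_{i=1}^d \V(a_i) = \V\bigl(p^{d-1}\prod a_i\bigr)$. On the other hand, iterating $\F\V = p$ while using that $\F$ is a ring endomorphism fixing the integer $p$ gives $\F^{d-1}\V^d = p^{d-1}\V$ as operators, and $p\,\V(x) = \V(x)\cdot p = \V\bigl(x\,\F(p)\bigr) = \V(p\,x)$ then yields $\F^{d-1}\V^d(x) = \V(p^{d-1}x)$. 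Both expressions therefore equal $\V\bigl(p^{d-1}\prod a_i\bigr)$. Applying the identity with $a_i = \ell_i(\tilde{x_1},\dotsk,\tilde{x_n})$ yields the main formula.

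For the \emph{in particular} statement, I would specialize to $A = \f[X_{\bullet n}]_{n\in\bbN}$ and $x_j := (0,X_{j1},X_{j2},\dotsk)$, so that $\tilde{x_j} = (X_{j1},X_{j2},\dotsk)$; since each $\ell_i$ has no constant term, the vanishing of the zeroth components of the $x_j$ forces $\ell_i(x_1,\dotsk,x_n)\equiv 0\pmod{\V}$, so the hypothesis is satisfied. Since $A$ has characteristic $p$, $\F$ acts on $\W(A)$ as the componentwise $p$-th power. The $\xi$-th Witt component of $P(x_1,\dotsk,x_n)$ is then $P_\xi(0,X_{\bullet 1},\dotsk,X_{\bullet\xi})$, while $\F^{d-1}\V^d P(\tilde{x_1},\dotsk,\tilde{x_n})$ has its first $d$ components equal to $0$ and its $(\xi+d)$-th equal to $P_\xi(X_{\bullet 1},\dotsk,X_{\bullet\xi+1})^{p^{d-1}}$. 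Comparing component by component yields the stated identities, understood (as is needed in Lemma \ref{101}) after imposing $X_{\bullet 0}=0$. The only genuine difficulty is the product-of-Verschiebungs identity above; the rest is bookkeeping.
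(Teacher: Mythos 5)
Your proof is correct and follows essentially the same route as the paper: Lemma \ref{MC6} is applied to each factor, and the product of $d$ Verschiebungs is converted into $\F^{d-1}\V^d$ --- the paper cites \eqref{137} for this step, while you rederive the identity $\prod_{i=1}^d\V(a_i)=\F^{d-1}\V^d\bigl(\prod_{i=1}^d a_i\bigr)=\V\bigl(p^{d-1}\prod_{i=1}^d a_i\bigr)$ directly from Proposition \ref{prop:40}, which is an equivalent justification. Your explicit treatment of the ``in particular'' part (which the paper leaves implicit) is fine, with two minor remarks: it is Lemma \ref{MC-102}, not Lemma \ref{101}, that invokes it, and there only the relations $(\ell_i)_0=0$ are imposed rather than all of $X_{\bullet0}=0$; your componentwise comparison applies verbatim in that weaker quotient as well, since the hypothesis $\ell_i(x)\equiv0\bmod\V$ already holds there with $\tilde{x}_j=(X_{j1},X_{j2},\dots)$ and no vanishing of the $X_{j0}$ is needed.
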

\begin{proof}
Using the preceding lemma and Corollary \ref{137} we get
\begin{align*}
P(x_1\dotsk x_n)&=\prod_{i=1}^d\ell_i(x_1\dotsk x_n)\\
&=\prod_{i=1}^d\V\ell_i(\tilde{x_1}\dotsk\tilde{x_n})\\
&=\F^{d-1}\V^d\prod_{i=1}^d\ell_i(\tilde{x_1}\dotsk\tilde{x_n})\\
&=\F^{d-1}\V^dP(\tilde{x_1}\dotsk\tilde{x_n})\qedhere
\end{align*}
\end{proof}

In the case of equal characteristic, this lemma also holds, but for any set of linear forms. Its proof is straight forward, we just state the result:
\begin{lemma}\label{MC-101}
Let $\f$ be a field, and let $\DVR=\psr{\f}{t}$. Let $P=\prod_{i=1}^d\ell_i\in\DVR[X_1\dotsk X_n]$, where the $\ell_i$ are linear forms. Let $x_1\dotsk x_n\in\psr{A}{t}$, where $A$ is a $\f$-algebra, be such that $\ell_i(x_1\dotsk x_n)\equiv0\mod t$ for every $i$, and define $\tilde{x_i}$ as in Lemma \ref{MC6} (recall that we write elements of power series rings as tuples). Then
$$P(x_1\dotsk x_n)=t^dP(\tilde{x_1}\dotsk\tilde{x_n}).$$ 
In particular, working in $\psr{\f[X_{\bullet n}]_{n\in\bbN}}{t}$, we have $P_\xi=0$ for $\xi<d$, and $P_{\xi+d}=P_\xi(X_{\bullet 1}\dotsk X_{\bullet\xi+1})$ for $\xi\in\bbN$.
\end{lemma}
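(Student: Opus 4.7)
The plan is to imitate the proof of Lemma \ref{102}, which becomes substantially simpler in equal characteristic: the identity $(tf)(tg)=t^{2}fg$ replaces the awkward $(\V f)(\V g)=\F\V^{2}(fg)$, so there is no Frobenius factor to track. As in the mixed-characteristic case, the statement implicitly requires each $\ell_i=\sum_{j=1}^n c_{ij}X_j$ to have coefficients $c_{ij}\in\f\subset\DVR$, the analog of "multiplicative representatives"; otherwise the elementary step below fails, as witnessed by $\ell=tX$ with $x_0\neq 0$.

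Granted this hypothesis, the first step is to establish the pointwise identity $\ell_i(x)=t\,\ell_i(\tilde x)$. I would write each $x_j=x_{j,0}+t\tilde{x_j}$ in $\psr{A}{t}$ with $x_{j,0}\in A$. Since the coefficients of $\ell_i$ lie in $\f$, multiplication by each $c_{ij}$ respects the decomposition into constant term plus $t$-tail, giving the clean splitting
\[
\ell_i(x_1,\dotsk x_n)=\ell_i(x_{1,0},\dotsk x_{n,0})+t\,\ell_i(\tilde{x_1},\dotsk\tilde{x_n}),
\]
in which the first summand lies in $A\subset\psr{A}{t}$ as a constant series. The hypothesis $\ell_i(x_1,\dotsk x_n)\equiv 0\bmod t$ then forces $\ell_i(x_{1,0},\dotsk x_{n,0})=0$ in $A$, whence $\ell_i(x)=t\,\ell_i(\tilde x)$. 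Multiplying the $d$ identities gives $P(x)=t^d P(\tilde x)$.

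For the universal statement, I would specialize to $A:=R=\f[X_{\bullet n}]_{n\in\bbN}/(\ell_{1,0}\dotsk\ell_{d,0})$, where $\ell_{i,0}\in\f[X_{\bullet 0}]$ is $\ell_i$ reduced modulo $t$, and take $x_j:=X_j=\sum_{k\geq0}X_{jk}t^k\in\psr{R}{t}$. By construction the relations $\ell_{i,0}(X_{\bullet 0})=0$ hold in $R$, so $\ell_i(X)\equiv 0\bmod t$, and the first part yields $P(X_1,\dotsk X_n)=t^d P(\tilde X_1,\dotsk\tilde X_n)$ in $\psr{R}{t}$. Matching coefficients of $t^\xi$, and observing that the $\xi$-th coefficient of $P(\tilde X)$ is $P_\xi(X_{\bullet 1}\dotsk X_{\bullet\xi+1})$ (the universal polynomial with all indices shifted up by one), produces $P_\xi=0$ for $\xi<d$ and $P_{\xi+d}=P_\xi(X_{\bullet 1}\dotsk X_{\bullet\xi+1})$ for $\xi\geq 0$, as claimed. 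There is no serious obstacle here; the only subtle point is the implicit coefficient restriction flagged in the first paragraph, which is what makes the displayed identity for $\ell_i(x)$ hold on the nose rather than merely modulo $t$.
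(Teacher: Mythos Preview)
Your argument is correct and is precisely the direct computation the paper has in mind: the paper does not actually prove Lemma~\ref{MC-101} at all, writing only ``Its proof is straight forward, we just state the result.'' Your splitting $\ell_i(x)=\ell_i(x_{\bullet,0})+t\,\ell_i(\tilde x)$, followed by multiplication of the $d$ resulting identities, is the equal-characteristic analogue of the chain of equalities in the proof of Lemma~\ref{102}, with the simplification you note (no Frobenius twist).

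Your remark about the implicit hypothesis is well taken. As literally stated, the lemma allows $\ell_i\in\psr{\f}{t}[X_1\dotsk X_n]$, but your counterexample $\ell=tX$ shows the identity $P(x)=t^dP(\tilde x)$ fails unless the coefficients lie in $\f$; this is the exact analogue of the ``multiplicative representative'' hypothesis in Lemma~\ref{MC6}. In the paper's applications (Lemma~\ref{MC-102}, Theorem~\ref{mainMC}) the forms in practice have constant coefficients, so nothing downstream breaks, but the statement of the lemma is imprecise on this point. Likewise, your reading of the ``In particular'' clause---that the identities $P_\xi=0$ for $\xi<d$ and $P_{\xi+d}=P_\xi(X_{\bullet1}\dotsk X_{\bullet\xi+1})$ hold in the quotient $\f[X_{\bullet n}]_{n\in\bbN}/\bigl((\ell_i)_0\bigr)_i$ rather than in $\f[X_{\bullet n}]_{n\in\bbN}$ itself---is the correct one, and is exactly how the lemma is invoked in the proof of Lemma~\ref{MC-102}.
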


\begin{lemma}\label{MC-102}
Let $S$ be a finite set. Let $Q=\prod_{i\in S}\ell_i$, where the $\ell_i\in\DVR[X_1\dotsk X_n]$ are linear forms satisfying the following conditions:
\begin{itemize}
\item If $\DVR=\psr{\f}{t}$ then the $\ell_i$ are arbitrary
\item If $\DVR=\W(\f)$, where $\f$ is a field of prime characteristic different from $2$, then each $\ell_i$ is linear form in at most two variables, and its coefficients are multiplicative representatives.
\item If $\DVR=\W(\f)$, where $\f$ is a field of characteristic $2$, assume that all the forms are of the type $X_i-X_j$
\end{itemize}
Let $T$ be a subset of $S$. If $\ord\ell_i>0$ for $i\in T$, then $\ord Q\geq \abs{T}$. Moreover, for every integer $\xi\geq-1$ we have $$\mu_{\SSDVR^n}(\ord Q>\xi+\abs{T},\ord\ell_i>0,i\in T,\ord\ell_i=0,i\in S\setminus T)=\tfrac{[H_T]}{\bbL^n}\mu_{\SSDVR^n}(\ord Q>\xi),$$
where $H_T$ is the subvariety of $\SSDVR^n_1=\bbA^n_{\f}$ given by $(\ell_i)_0=0$ for $i\in T$ and $(\ell_i)_0\neq0$ for $i\in S\setminus T$.
\end{lemma}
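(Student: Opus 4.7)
The first assertion is immediate: since $\ord$ is additive under products, $\ord Q = \sum_{i \in S} \ord \ell_i \geq \sum_{i \in T} \ord \ell_i \geq |T|$ whenever $\ord \ell_i > 0$ for each $i \in T$. For the measure identity, set $d = |T|$ and let $A$ denote the set on the left-hand side. My plan is to follow the strategy of Lemma \ref{leuven3} and Lemma \ref{101}: identify $\pi_{N+1}(A)$, for $N \geq \xi + d$, as a locally closed subscheme of $\bbA^{n(N+1)}_{\f}$, simplify its defining equations, and then compute its class in $\KOR$. Concretely, $\pi_{N+1}(A)$ is cut out by the conditions $(\ell_i)_0(X_{\bullet 0}) = 0$ for $i \in T$ and $(\ell_i)_0(X_{\bullet 0}) \neq 0$ for $i \in S \setminus T$, which together carve out $H_T$ in the first $n$ coordinates $X_{\bullet 0}$, together with the closed conditions $Q_0 = \ldots = Q_{\xi + d} = 0$.

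The crux is to simplify the equations $Q_j = 0$ on the locus $\{X_{\bullet 0} \in H_T\}$. Writing $Q = Q_T \cdot Q_{S \setminus T}$, one applies Lemma \ref{102} (or Lemma \ref{MC-101} in equal characteristic) to $Q_T$: modulo the defining ideal of $H_T$, one has $(Q_T)_j = 0$ for $j < d$ and $(Q_T)_{j+d} = (Q_T)_j(X_{\bullet 1}, \ldots, X_{\bullet j+1})^{p^{d-1}}$ for $j \geq 0$ (with no Frobenius twist in the equal-characteristic case). Since $(Q_{S \setminus T})_0(X_{\bullet 0})$ is a unit on $H_T$ by hypothesis, the Witt-vector identity $(\V^d a)\cdot b = \V^d(a \cdot \F^d b)$ allows one to commute the factor $\V^d$ past multiplication by $Q_{S \setminus T}$, showing that $Q \in \V^d \W$ on this locus. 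Consequently the equations $Q_j = 0$ for $j < d$ are automatic, and for $0 \leq j \leq \xi$ the equation $Q_{j+d} = 0$ reduces, modulo the ideal of $H_T$ and up to multiplication by a unit coming from $(Q_{S \setminus T})_0$, to the vanishing of $(Q_T)_j(X_{\bullet 1}, \ldots, X_{\bullet j+1})^{p^{d-1}}$.

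Passing to the reduced subscheme absorbs the $p^{d-1}$-th power (using $[X] = [X_{\mathrm{red}}]$ in $\KOR$), and the change of variables $X_{\bullet i} \mapsto X_{\bullet i-1}$ for $i \geq 1$ then identifies the remaining closed subscheme of $\bbA^{nN}_{\f}$ with $\pi_N(\ord Q > \xi)$, exactly as at the end of the proof of Lemma \ref{101}. We therefore obtain $[\pi_{N+1}(A)] = [H_T] \cdot [\pi_N(\ord Q > \xi)]$ in $\KOR$, and dividing by $\bbL^{n(N+1)}$ yields the asserted formula $\mu_{\SSDVR^n}(A) = \tfrac{[H_T]}{\bbL^n}\mu_{\SSDVR^n}(\ord Q > \xi)$.

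The main technical obstacle is the Witt arithmetic of the previous paragraph: one must verify carefully that on $H_T$ the defining equations $Q_{j+d} = 0$ genuinely collapse, up to units and the Frobenius twist by $p^{d-1}$, to the vanishing of $(Q_T)_j$ evaluated at the shifted Witt variables. Lemma \ref{102} does the heavy lifting for $Q_T$, while the identity $(\V^d a) b = \V^d(a \F^d b)$ is what keeps the unit factor $Q_{S \setminus T}(x)$ from obstructing the reduction; the equal-characteristic case is strictly simpler, using Lemma \ref{MC-101} in place of Lemma \ref{102} and involving no Frobenius twists at all.
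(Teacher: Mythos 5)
Your strategy is the same as the paper's: identify $\pi_{N+1}$ of the set in question as a locally closed subscheme cut out by $(\ell_i)_0=0$ ($i\in T$), $(\ell_i)_0\neq0$ ($i\in S\setminus T$) and $Q_0=\dots=Q_{\xi+|T|}=0$, use Lemma \ref{102} (resp.\ \ref{MC-101}) to shift the equations, split off $H_T$ as the factor involving only $X_{\bullet0}$, and conclude by the change of variables $X_{\bullet i}\mapsto X_{\bullet i-1}$. You are in fact more explicit than the paper about the factor $Q_{S\setminus T}$, and your ``up to a unit'' reduction is sound provided it is read inductively: modulo the previously obtained equations and after inverting $(\ell_i)_0$ for $i\notin T$, one has $Q_{j+d}\equiv(Q_T)_j(X_{\bullet1},\dots,X_{\bullet j+1})^{p^{d-1}}\cdot(\text{unit})$; alternatively one can argue on $\overline{\f}$-points and invoke $[X]=[X_{\mathrm{red}}]$.

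There is, however, a genuine problem in your last step. After your reduction the surviving equations are the components of $Q_T$ at the shifted variables, so the change of variables identifies the second factor with $\pi_N(\ord Q_T>\xi)$, \emph{not} with $\pi_N(\ord Q>\xi)$; these differ whenever $T\subsetneq S$, and the displayed identity with the full $Q$ on the right is in fact false in general. For instance, take $n=2$, $\ell_1=X_1$, $\ell_2=X_2$, $T=\{1\}$, $\xi=0$: the left-hand side is $\bbL^{-2}(1-\bbL^{-1})$, whereas $[H_T]\bbL^{-2}\mu_{\SSDVR^2}(\ord X_1X_2>0)=(\bbL-1)\bbL^{-2}(2\bbL^{-1}-\bbL^{-2})$. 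What your computation actually establishes is the formula with $\mu_{\SSDVR^n}(\ord Q_T>\xi)$ on the right, and that is precisely the form in which the lemma is used in the proof of Theorem \ref{42}, where it is applied to $Q_T$; the printed right-hand side (and the occurrence of $Q_j$ rather than $(Q_T)_j$ at the end of the paper's own argument) should be read with $Q_T$ in place of $Q$. So the fix is not a new idea but a correction of the final identification: replace $\pi_N(\ord Q>\xi)$ by $\pi_N(\ord Q_T>\xi)$ and state the conclusion with $\mu_{\SSDVR^n}(\ord Q_T>\xi)$; as written, the jump back to the full $Q$ is unjustified.
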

\begin{proof}
For $N$ sufficiently large, $\pi_{N+1}(\ord Q>\xi+\abs{T},\ord\ell_i>0,i\in T,\ord\ell_i=0,i\notin T)$ is the spectrum of the algebra
\begin{equation*}
\frac{\f[X_{\bullet0}\dotsk X_{\bullet N}][(\ell_i)_0^{-1}]_{i\notin T}}{\bigl(Q_0\dotsk Q_{\xi+\abs{T}},(\ell_i)_0\bigr)_{i\in T}}.
\end{equation*}
By Lemma \ref{102} or \ref{MC-101} the class of this equals the class the spectrum of
\begin{equation*}
\frac{\f[X_{\bullet0}\dotsk X_{\bullet N}][(\ell_i)_0^{-1}]_{i\notin T}}{\bigl(Q_0(X_{\bullet1})\dotsk Q_{\xi}(X_{\bullet1}\dotsk X_{\bullet\xi+1}),(\ell_i)_0\bigr)_{i\in T}}.
\end{equation*}
Now, since the $(\ell_i)_0$ only involves the variables $X_{\bullet 0}$, we may write this as
\begin{equation*}
\frac{\f[X_{\bullet0}][(\ell_i)_0^{-1}]_{i\notin T}}{((\ell_i)_0)_{i\in T}}\otimes_{\f}\frac{\f[X_{\bullet1}\dotsk X_{\bullet N}]}{(Q_0(X_{\bullet1})\dotsk Q_{\xi}(X_{\bullet1}\dotsk X_{\bullet\xi+1}))}
\end{equation*}
The spectrum of the first factor is $H_T$ whereas, using the change of variables $X_{\bullet i}\mapsto X_{\bullet i-1}$, the spectrum of the second factor is $\pi_N(\ord Q>\xi)$. The result follows.
\end{proof}

\begin{theorem}\label{42}
Let $S$ be a finite set. For $i\in S$, let $\ell_i\in\DVR[X_1\dotsk X_n]$ be a linear forms, satisfying the conditions of Lemma \ref{MC-102}. Let $Q=\prod_{i\in S}\ell_i$. For $T\subset S$, define $Q_T=\prod_{i\in T}\ell_i$. Let $H_T$ be the subvariety of $\SSDVR^n_1=\bbA^n_{\f}$ given by $(\ell_i)_0=0$ for $i\in T$ and $(\ell_i)_0\neq0$ for $i\in S\setminus T$.  Then
$$\int_{\substack{\ord\ell_i\neq0,i\in T\\\ord\ell_i=0,i\notin T}}\abs{Q}d\mu_{\SSDVR^n}=[H_T]\bbL^{-\abs{T}-n}\int_{\SSDVR^n_\infty}\abs{Q_T}d\mu_{\SSDVR^n}.$$
\end{theorem}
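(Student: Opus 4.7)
The plan is to reduce the integral on the left to an integral of $\abs{Q_T}$ over all of $\SSDVR^n_\infty$, by decomposing the integration domain
\[
B:=\{\ord\ell_i>0,\ i\in T;\ \ord\ell_i=0,\ i\notin T\}\subset\SSDVR^n_\infty
\]
into the level sets of $\ord Q_T$ and transporting each to a full-space level set via Lemma \ref{MC-102}.

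First I would observe that on $B$ every $\ell_i$ with $i\notin T$ has zero order, so $\ord Q=\ord Q_T$ pointwise on $B$, and consequently $\int_B\abs{Q}d\mu_{\SSDVR^n}=\int_B\abs{Q_T}d\mu_{\SSDVR^n}$. The first assertion of Lemma \ref{MC-102} (applied with $S$ replaced by $T$) yields $\ord Q_T\geq|T|$ on $B$, so after the shift $\eta=\xi+|T|$ the integral takes the form
\[
\bbL^{-|T|}\sum_{\xi\geq0}\bbL^{-\xi}\mu_{\SSDVR^n}\bigl(B\cap\{\ord Q_T=\xi+|T|\}\bigr).
\]

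The key step is to rewrite each level-set measure on $B$ as a telescoping difference and evaluate both terms by Lemma \ref{MC-102}. The identity
\[
\mu_{\SSDVR^n}\bigl(B\cap\{\ord Q_T=\xi+|T|\}\bigr)=\mu_{\SSDVR^n}\bigl(B\cap\{\ord Q_T>\xi+|T|-1\}\bigr)-\mu_{\SSDVR^n}\bigl(B\cap\{\ord Q_T>\xi+|T|\}\bigr)
\]
combined with Lemma \ref{MC-102} at the two consecutive indices $\xi-1$ and $\xi$ (exactly why the lemma allows $\xi\geq-1$) collapses the right-hand side to $\tfrac{[H_T]}{\bbL^n}\mu_{\SSDVR^n}(\ord Q_T=\xi)$.

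Substituting this back, the scalar $[H_T]\bbL^{-|T|-n}$ pulls out of the sum, and what remains is $\sum_{\xi\geq0}\bbL^{-\xi}\mu_{\SSDVR^n}(\ord Q_T=\xi)=\int_{\SSDVR^n_\infty}\abs{Q_T}d\mu_{\SSDVR^n}$ by definition, giving the asserted identity. The main obstacle is really just the index bookkeeping around the shift and the telescoping step; convergence of all series involved and measurability of each level set $B\cap\{\ord Q_T=\eta\}$ follow from Proposition \ref{11} applied to $Q_T$ together with Lemma \ref{leuven3}.
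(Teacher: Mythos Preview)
Your argument is correct and is essentially the paper's own proof: both replace $|Q|$ by $|Q_T|$ on $B$, shift the summation index by $|T|$ via the first assertion of Lemma~\ref{MC-102}, and then use the measure identity of that lemma to factor out $[H_T]\bbL^{-|T|-n}$ and reduce to the full-space integral of $|Q_T|$. The only difference is that you make the telescoping step from $\{\ord Q_T>\cdot\}$ to $\{\ord Q_T=\cdot\}$ explicit (and note why $\xi\geq-1$ is needed), whereas the paper absorbs this into the phrase ``Using the second part of the lemma it follows that\dots''.
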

\begin{proof}
By the first part of the lemma, $\{\ord Q=\xi,\ord\ell_i>0,i\in T,\ord\ell_i=0,i\notin T\}=\emptyset$ for $\xi<\abs{T}$, hence
\begin{align*}
\int_{\substack{\ord\ell_i\neq0,i\in T\\ \ord\ell_i=0,i\notin T}}\abs{Q}d\mu_{\SSDVR^n} &=\int_{\substack{\ord\ell_i\neq0,i\in T\\ \ord\ell_i=0,i\notin T}}\abs{Q_T}d\mu_{\SSDVR^n}\\
&=\sum_{\xi\geq0}\mu_{\SSDVR^n}(\ord Q_T=\xi+\abs{T},\ord\ell_i>0,i\in T,\ord\ell_i=0,i\notin T)\bbL^{-(\xi+\abs{T})}.
\end{align*}
Using the second part of the lemma it follows that this equals
\begin{equation*}
\sum_{\xi\geq0}[H_T]\bbL^{-n}\mu_{\SSDVR^n}(\ord Q_T=\xi)\bbL^{-(\xi+\abs{T})}=[H_T]\bbL^{-\abs{T}-n}\int_{\SSDVR^n_\infty}\abs{Q_T}d\mu_{\SSDVR^n}.\qedhere
\end{equation*}
\end{proof}

\subsection{The recursion}
We are finally ready to give the motivic version of the recursion. We will then see in Example \ref{MC4} how to use this in practice.
\begin{theorem}\label{mainMC}
Let $\DVR$ be a complete discrete valuation ring. Let $Q=\prod_{i\in S}\ell_i$, where $\ell_i\in\DVR[X_1\dotsk X_n]$ are linear forms, satisfying the conditions of Lemma \ref{MC-102}. Then $\int_{\SSDVR^n_\infty}\abs{Q}d\mu_{\SSDVR^n}$ is a rational function in $\bbL$, i.e., there is an $f\in\bb{Z}(T)$ such that the integral equals $f(\bbL)$. Moreover, $f$ may be computed explicitly by recursion.
\end{theorem}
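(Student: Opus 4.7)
The plan is to induct on $\abs{S}$, using Theorem \ref{42} as the crucial step. For the base case $\abs{S}=0$, $Q$ is the empty product equal to $1$, so $I_Q := \int_{\SSDVR^n_\infty}\abs{Q}\,d\mu_{\SSDVR^n} = 1 \in \bbZ(\bbL)$ trivially.

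For the induction step with $\abs{S}\geq 1$, I would partition $\SSDVR^n_\infty$ into the finitely many disjoint subsets
$$U_T = \bigl\{\ord\ell_i>0\text{ for }i\in T,\ \ord\ell_j=0\text{ for }j\in S\setminus T\bigr\},\qquad T\subset S.$$
Each $U_T$ is stable of level one, since $\{\ord\ell_i\geq 1\}$ is stable of level one by Lemma \ref{leuven3} and so is its complement. Writing $I_Q = \sum_{T\subset S}\int_{U_T}\abs{Q}\,d\mu_{\SSDVR^n}$ and applying Theorem \ref{42} to each summand gives
$$I_Q \;=\; \sum_{T\subset S}[H_T]\,\bbL^{-\abs{T}-n}\,I_{Q_T},\qquad Q_T:=\prod_{i\in T}\ell_i.$$
After isolating the term $T=S$, this rearranges to
$$\bigl(1-[H_S]\bbL^{-\abs{S}-n}\bigr)\, I_Q \;=\; \sum_{T\subsetneq S}[H_T]\,\bbL^{-\abs{T}-n}\,I_{Q_T}.$$

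Next I would check two ingredients. First, since $H_S\subset\bbA^n_\f$ is a linear subspace, $[H_S] = \bbL^{\dim H_S}$ with $\dim H_S\leq n$, so the exponent $\dim H_S - \abs{S} - n$ is at most $-\abs{S}\leq -1$; hence $1-[H_S]\bbL^{-\abs{S}-n}$ is invertible in $\KOW$ by the geometric series (Proposition \ref{MC-34}), and its inverse is the rational function $\bbL^{\abs{S}+n-\dim H_S}/(\bbL^{\abs{S}+n-\dim H_S}-1)\in\bbZ(\bbL)$. Second, by inclusion-exclusion on the open conditions in the definition of $H_T$,
$$[H_T] \;=\; \sum_{A\subset S\setminus T}(-1)^{\abs{A}}\Bigl[\bigcap_{i\in T\cup A}\{(\ell_i)_0=0\}\Bigr],$$
a $\bbZ$-linear combination of classes of linear subspaces of $\bbA^n_\f$ (each a power of $\bbL$), so $[H_T]\in\bbZ[\bbL]$. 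For $T\subsetneq S$ the subfamily $\{\ell_i\}_{i\in T}$ still satisfies the hypotheses of Lemma \ref{MC-102}, so by the inductive hypothesis $I_{Q_T}\in\bbZ(\bbL)$; the displayed identity then expresses $I_Q$ as an explicit rational function of $\bbL$ with integer coefficients, furnishing the desired recursion.

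The main obstacle will be the apparent circularity of the partition identity when $T=S$: it writes $I_Q$ in terms of itself. This dissolves because $\abs{S}\geq 1$ forces the self-coefficient to be of the form $1-\bbL^{-r}$ with $r\geq 1$, a unit of $\KOW$ with rational-function inverse in $\bbZ(\bbL)$. Aside from this, everything reduces to machinery already in place: stability of the $U_T$, the elementary inclusion-exclusion for $[H_T]$, and Theorem \ref{42} itself.
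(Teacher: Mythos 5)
Your proposal is correct and follows essentially the same route as the paper: the same decomposition of $\SSDVR^n_\infty$ by the subsets $T\subset S$ of forms vanishing modulo the maximal ideal, Theorem \ref{42} applied to each piece, isolation of the $T=S$ term, and invertibility of $1-[H_S]\bbL^{-\abs{S}-n}$ via Proposition \ref{MC-34}. The only cosmetic difference is that you establish $[H_T]\in\bbZ[\bbL]$ by direct inclusion--exclusion over the open conditions, whereas the paper cites Theorem \ref{C1}, whose proof is the same inclusion--exclusion in inductive form.
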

\begin{proof}
This recursion is immediate, using Theorem \ref{42}: Write, for $T\subset S$, $Q_T:=\prod_{i\in T}\ell_i$. We have
$$\int_{\SSDVR^n_\infty}\abs{Q}d\mu_\SSDVR=\int_{\overline{\ell}_i=0,\,i\in S}\abs{Q}d\mu_\SSDVR+\sum_{T\subsetneq S}\int_{\substack{\overline{\ell}_i=0,\,i\in T\\ \overline{\ell}_i\neq0,\, i\in S\setminus T}}\abs{Q_T}d\mu_{\SSDVR}.$$
Theorem \ref{42} now shows that
\begin{equation}\label{MC5}
(1-[H_S]\bbL^{-\abs{S}-n})\int_{\SSDVR^n_\infty}\abs{Q}d\mu_\SSDVR=\sum_{T\subsetneq S}[H_T]\bbL^{-\abs{T}-n}\int_{\SSDVR^n_\infty}\abs{Q_T}d\mu_{\SSDVR}.
\end{equation}
The right hand side is known inductively. Moreover, since $\dim H_S\leq n$ it follows that $[H_S]\bbL^{-\abs{S}-n}\in\filtration^{\leq-\abs{S}}\KOL$, hence it is not equal to $1$. Also, by the first part of the proof of Theorem \ref{C1}, $H_S$ is an affine space, hence equal to $\bbL^m$ for some $m$. Therefore, by Proposition \ref{MC-34}, $1-[H_S]\bbL^{-\abs{S}-n}$ is invertible. Hence $\int_{\SSDVR^n_\infty}\abs{Q}d\mu_\SSDVR$ is as a rational function in $\bbL$ and the classes of various hyper plane arrangements. Because of the next theorem, Theorem \ref{C1}, it follows that the integral is a rational function in $\bbL$.
\end{proof}
The following theorem is of course already well known. We provide a proof for completeness.
\begin{theorem}\label{C1}
Let $V$ be a finite dimensional $\f$-space, $I$ a finite set and for every $i\in I$, let $\ell_i\colon V\to\f$ be a linear function. For $S\subset I$, let $H_S=\{\ell_i=0,i\in S,\ell_i\neq0,i\notin S\}\subset\bbA_V$, i.e, $H_S(R)=\set{x\in R\otimes_\f V}{\ell_i(x)=0,i\in S,\ell_i(x)\in R^\times, i\notin S}$ for every $\f$-algebra $R$. Then there is a polynomial $p\in\bbZ[X]$ such that $[H_S]=p(\bbL)\in\KOR$.
\end{theorem}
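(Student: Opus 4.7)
The plan is to reduce the computation of $[H_S]$ to the case of a pure linear subspace (defined only by equations, no inequations), since such a subspace is an affine space and therefore has class a power of $\bbL$. For every subset $T\subseteq I$, I would introduce
$$W_T := \set{x\in\bbA_V}{\ell_i(x)=0 \text{ for all } i\in T},$$
which is a linear subspace of $V$, hence isomorphic to $\bbA_\f^{d_T}$ where $d_T=\dim_\f V-\dim_\f\mathrm{span}\{\ell_i\}_{i\in T}$. Thus $[W_T]=\bbL^{d_T}\in\KOR$.

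Next, I would rewrite $H_S$ in terms of the $W_T$. Set-theoretically,
$$H_S = W_S\setminus\bigcup_{i\in I\setminus S}W_{S\cup\{i\}},$$
because a point of $W_S$ fails to lie in $H_S$ precisely when some additional $\ell_i$ (with $i\notin S$) also vanishes at it. The key step is then to apply inclusion–exclusion in the Grothendieck ring: since $[-]$ is additive on disjoint constructible pieces and $W_{S\cup U_1}\cap W_{S\cup U_2}=W_{S\cup U_1\cup U_2}$, a standard induction on $|I\setminus S|$ yields
$$[H_S] = \sum_{U\subseteq I\setminus S}(-1)^{|U|}[W_{S\cup U}].$$
Substituting $[W_{S\cup U}]=\bbL^{d_{S\cup U}}$ expresses $[H_S]$ as a $\bbZ$-linear combination of powers of $\bbL$, which is the desired polynomial $p(\bbL)$.

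There is essentially no obstacle here; the only point requiring a brief justification is that inclusion–exclusion is legitimate in $\KOR$, which follows from the scissor relations $[A]=[B]+[A\setminus B]$ for $B\subseteq A$ closed, applied inductively. The appearance of this argument inside the proof of Theorem \ref{mainMC} is exactly to cover the assertion that the $[H_T]$ appearing in the recursion \eqref{MC5} can be treated as polynomials in $\bbL$, so no further structure is needed.
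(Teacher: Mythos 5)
Your proof is correct. The route differs from the paper's mainly in organization: you write the complement as a single closed-form inclusion--exclusion, $[H_S]=\sum_{U\subseteq I\setminus S}(-1)^{\abs{U}}[W_{S\cup U}]$ with each $W_{S\cup U}$ a linear subspace of class $\bbL^{d_{S\cup U}}$, whereas the paper first reduces to $S=\emptyset$ by restricting to $\bigcap_{i\in S}\ker\ell_i$ and then inducts on the number of inequations, peeling off one hyperplane at a time via the two-term identity $[\{\ell_i\neq0\}_{i=1}^{n}]=[\{\ell_i\neq0\}_{i=1}^{n-1}]-[\{\ell_i|_{\ker\ell_n}\neq0\}_{i=1}^{n-1}]$ (with base case $[\{\ell\neq0\}]=\bbL^{d-1}(\bbL-1)$ obtained from a choice of basis). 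Unrolling the paper's recursion gives exactly your alternating sum, so both arguments rest on the same two ingredients, namely the scissor relation $[A]=[B]+[A\setminus B]$ in $\KOR$ and the fact that a linear subspace is an affine space; your justification of inclusion--exclusion by induction from the scissor relation is legitimate since each $W_{S\cup\{i\}}$ is closed in $W_S$ and all the relevant intersections are again of the form $W_{S\cup U}$. What your version buys is an explicit polynomial, $p(X)=\sum_{U\subseteq I\setminus S}(-1)^{\abs{U}}X^{d_{S\cup U}}$, read off from the dimensions of the intersection subspaces, with no preliminary reduction step; what the paper's version buys is slightly lighter bookkeeping (a single subtraction per step) at the cost of being recursive rather than closed-form. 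Either form suffices for its role in Theorem \ref{mainMC}, where only the fact that $[H_T]$ is an integer polynomial in $\bbL$ is used.
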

\begin{proof}
First, let $U=\cap_{i\in S}\ker\ell_i$. Then $H_S=\{\ell_i|_U\neq0,i\notin S\}\subset\bbA_U$. We may therefore assume that $S=\emptyset$. We now prove the claim by induction on the number of hyperplanes: First, let $\ell\colon V\to\f$ be non-zero. We may then choose a basis of $V$, $\{e_1\dotsk e_d\}$ such that $\ell(e_1)=1$ and $\ell(e_i)=0$ for $i>1$. Hence $\{\ell\neq0\}=\spec\f[X_1\dotsk X_d][X_1^{-1}]$, and consequently $[\{\ell\neq0\}]=\bbL^{d-1}(\bbL-1)\in\KOR$.

Assume now that the claim holds for any $\f$-space $V$ and for any collection of less than $n$ hyperplanes. Let $H=\{\ell_i\neq0\}_{i=1}^n\subset\bbA_V$. Define $U=\ker\ell_n$. Then $\{\ell_i\neq0\}_{i=1}^{n-1}\subset\bbA_V$ is the disjoint union of $H$ and $\{\ell_i|_U\neq0\}_{i=1}^{n-1}\subset\bbA_U\subset\bbA_V$, hence $[H]=[\{\ell_i\neq0\}_{i=1}^{n-1}]-[\{\ell_i|_U\neq0\}_{i=1}^{n-1}]\in\KOR$, and we are done by induction.
\end{proof}
%Theorem \ref{mainMC} can be used to compute all the integral that was the original motivation for this article, namely those coming from a Weyl group.
We illustrate this with an example.
\begin{example}\label{MC4}
We apply this to the example that motivated this computations, the integral $V^n:=\int_{\SSDVR^n_\infty}\absb{\prod_{1\leq i<j\leq n}(X_i-X_j)}d\mu_{\SSDVR^n}$. By \eqref{MC5} we have
$$\bigl(1-[\{X_1=X_2\}]\bbL^{-2-1}\bigr)V^2=[\{X_1\neq X_2\}]\bbL^{-2}.$$
Since $[\{X_1=X_2\}]=\bbL$ and $[\{X_1\neq X_2\}]=\bbL^2-\bbL$ it follows that $V^2=\bbL/(\bbL+1)$.

Note that $V^2$ may also be computed using change- and separation of variables, together with the result of Example \ref{MC9}: $V^2=\int_{\SSDVR^2_\infty}\abs{X_1-X_2}d\mu_{\SSDVR^2}=\int_{\SSDVR^2_\infty}\abs{X_1}d\mu_{\SSDVR^2}=\int_{\SSDVR^1_\infty}\abs{X_1}d\mu_{\SSDVR^1} \cdot\int_{\SSDVR^1_\infty}d\mu_{\SSDVR^1}=\bbL/(\bbL+1)$.

Next we compute $V^3$. Here we really need our recursion, there is no straight forward way to compute $V^3$, as was the case with $V^2$. So we use \eqref{MC5} to obtain
\begin{multline*}
(1-[\{X_1=X_2=X_3\}]\bbL^{-3-3})V^3\\
=3[\{X_1=X_2\neq X_3\}]\bbL^{-1-3}\int_{\SSDVR^3}\abs{X_1-X_2}d\mu_{\SSDVR}+[\{X_1\neq X_2,X_2\neq X_3,X_1\neq X_3\}]\bbL^{-3}.
\end{multline*}
Here the classes of the two first hyperplane arrangements are straight forward to compute, $[\{X_1=X_2=X_3\}]=\bbL$, and $[\{X_1=X_2\neq X_3\}]=\bbL(\bbL-1)$. For the third one we use the method of Theorem \ref{C1} to obtain (the expected result) $[H_{\emptyset}]=\bbL(\bbL-1)(\bbL-2)$. Finally, by separation of variables, the integral in the right hand side equals $V^2$. Putting things together we obtain $V^3=\tfrac{(1-\bbL^{-1})(1-\bbL^{-1}+\bbL^{-2})}{(1+\bbL^{-1})(1-\bbL^{-5})}$.
\end{example}
\begin{remark}
The integrals $V^n$ comes up in connection with the problem of computing the density of the monic $n$th degree polynomials whose Galois group is the full symmetric group and whose nontrivial inertia groups are generated by a transposition. It is possible to do the same thing for general Weyl groups, and the integrals appearing in these computations are also possible to handle using the recursion.
\end{remark}

\subsection{Arbitrary forms}
In this subsection, we give a way to compute the integrals for arbitrary forms that works also in the mixed characteristic case. However, we the have to assume that the characteristic of the residue field is sufficiently large, and that the forms have coefficients in $\bbZ$. Also, this method is rather complicated to use in practice. (This method works also in the case if equal characteristic, but since we already have Theorem \ref{mainMC} in that case, we state the following theorem in the case of mixed characteristic only.)
\begin{theorem}\label{leuven1}
Let $\SSDVR=\bbA^n_{\DVR}$, where $\DVR=\W(\f)$ and $\f$ is a perfect field of characteristic $p$. Let $Q=\prod_{i\in S}\ell_i$, where $\ell_i\in\mathbb{Z}[X_1\dotsk X_n]$ are linear forms. Then, if $p$ is sufficiently large, $\int_{\SSDVR_\infty}\abs{Q}d\mu_{\SSDVR^n}\in\KOW$ is a rational function in $\bbL$, i.e., there is an $f\in\bb{Z}(T)$ such that the integral equals $f(\bbL)$. Moreover, there is an algorithm for computing $f$.
\end{theorem}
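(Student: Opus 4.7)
The plan is to adapt the recursion of Theorem \ref{mainMC} to arbitrary integer-coefficient forms by proving a version of Theorem \ref{42} valid in this generality, at the cost of enlarging the class of integrals under consideration to include products of \emph{inhomogeneous} linear forms (with constant terms in $\V\W(\sepclf)$). The hypothesis that $p$ is sufficiently large will be used to ensure that every nonzero coefficient $a_{ji}$ and every Jacobian determinant from an auxiliary linear change of variables (via Proposition \ref{MC-19}) is a $p$-adic unit. After a preliminary application of Proposition \ref{MC-19} together with separation of variables (Theorem \ref{MC-15}), I may further assume that the forms $\{\ell_j\}$ span $(\bbZ^n)^*$, so that their common zero locus $H_S\subset\bbA^n_\f$ reduces to the origin.

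The key step is the residue stratification of Lemma \ref{MC-102}. Partition $\W(\sepclf)^n$ by the residue $\xi\in\sepclf^n$, group strata by the subset $T\subset S$ of forms vanishing at $\xi$ modulo $\V$, and on each stratum perform the shift substitution $X_i=\R(\xi_i)+\V(Y_i)$. Using the identity $a\cdot\V(Y)=\V(\F(a)Y)$ together with the fact that the Frobenius $\F$ is the identity on $\bbZ\subset\W(\f)$, each form becomes
$$\ell_j(X)=\sum_i a_{ji}\R(\xi_i)+\V(\ell_j(Y)).$$
For $j\in T$ the first summand has vanishing first Witt coordinate and may be written as $\V(c_j(\xi))$ for an explicit $c_j(\xi)\in\W(\sepclf)$; for $j\notin T$ it is a unit. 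Following Theorem \ref{42}, the $T$-stratum contribution is $\bbL^{-n-|T|}$ times a motivic integral of $\absb{\prod_{j\in T}(c_j(\xi)+\ell_j(Y))}$ integrated algebraically over $\xi\in H_T$. Under the spanning assumption, the $T=S$ stratum reduces to the single point $\xi=0$, where $c_j(0)=0$, so its contribution is exactly $\bbL^{-n-|S|}\int\absb{Q}d\mu_\SSDVR$; this allows $\int\absb{Q}d\mu_\SSDVR$ to be isolated on the left-hand side, exactly as in \eqref{MC5}.

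The $T\subsetneq S$ strata produce integrals of inhomogeneous products $\prod_{j\in T}(c_j(\xi)+\ell_j(Y))$ that do not lie in the original class. My plan is to enlarge the class of admissible integrals accordingly and apply the same stratification to this enlarged class; the analog of the full-vanishing isolation then yields strictly smaller constants (dropping by one level of $\V$-depth, since $c_j\in\V\W(\sepclf)$ rewrites after the substitution as $\V(\V^{-1}c_j+\ell_j(Y))$). Termination is then by double induction on $|S|$ and on the total $\V$-depth of the constants; every coefficient produced is a class of an affine-hyperplane-arrangement complement in $\bbA^n_\f$, hence a polynomial in $\bbL$ by Theorem \ref{C1}. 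The main obstacle is to verify that this enlarged recursion really does close (in particular, that the motivic integration of the inner integrand over $\xi\in H_T$ is well-defined in the sense required) and that the complexity strictly decreases at each step; once these are in place one obtains both the desired rational function $f\in\bbZ(T)$ with $\int\absb{Q}d\mu_\SSDVR=f(\bbL)$ and the advertised algorithm for computing it.
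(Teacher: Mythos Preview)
Your approach diverges from the paper's and has a genuine gap at the heart of it.

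The paper does \emph{not} enlarge the class of integrands to inhomogeneous forms. Instead it enlarges the class of \emph{domains}: one proves by induction on $|S|$ that every integral of the shape $\int_{\overline{m}_j=0,\,j\in M}\abs{Q}\,d\mu$ (with $Q=\prod_{i\in S}\ell_i$ and an arbitrary finite family of auxiliary linear constraints $m_j$) is a rational function of $\bbL$. After the separation-of-variables and linear-change-of-variables reductions you describe, the constraints $\{\overline{\ell}_i=0\}_{i\in S}$ together with $\{\overline{m}_j=0\}_{j\in M}$ become exactly $\{\overline{x}_i=0\}_{i=1}^n$, so Theorem \ref{103} applies directly to the homogeneous form $Q$ and gives the $\bbL^{-n-|S|}$ self-term. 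The strata $T\subsetneq S$ contribute integrals of $|Q_T|$ over regions cut out by conditions $\overline{\ell}_i=0$ and $\overline{\ell}_i\neq0$; an inclusion--exclusion eliminates the $\neq 0$ constraints, leaving integrals of the same type with strictly fewer forms. Everything stays $\f$-rational throughout.

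Your route tries instead to mimic Theorem \ref{42} by the substitution $X_i=\R(\xi_i)+\V(Y_i)$ and absorbing the defect $\sum_i a_{ji}\R(\xi_i)=\V(c_j(\xi))$ into a constant term. The problem is that $c_j(\xi)$ is an element of $\W(\sepclf)$ depending nontrivially on the residue point $\xi\in H_T(\sepclf)$, so the ``inner'' integrals $\int\absb{\prod_{j\in T}(c_j(\xi)+\ell_j(Y))}\,d\mu(Y)$ are not constant over $H_T$ and are not even integrals of $\f$-rational data. The phrase ``integrated algebraically over $\xi\in H_T$'' has no meaning in the motivic measure theory of this paper: the clean product $[H_T]\cdot(\text{inner integral})$ in Lemma \ref{MC-102} comes precisely from the inner answer being independent of $\xi$, which is what Lemma \ref{MC6} buys under the special hypotheses and which fails here. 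Consequently your enlarged class does not close, and the ``$\V$-depth'' termination argument is also off: with your conventions $c_j(\xi)$ is \emph{not} in $\V\W(\sepclf)$ (it is $\V^{-1}$ of something in $\V\W$), so there is no obvious decreasing invariant. The paper's enlargement by extra vanishing constraints $\overline{m}_j=0$ sidesteps all of this: it keeps every integrand homogeneous and every domain cut out by $\f$-rational conditions, so Theorem \ref{103} and Theorem \ref{C1} apply without modification.
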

\begin{proof}
We do the recursion over the number of forms, the cardinality of $S$. Also, for the recursion to work, we compute $\int_{\overline{\mathit{m}}_j=0,\,j\in M}\abs{Q}d\mu$ for any finite set of linear functions $\mathit{m}_j$, $j\in M$ (in fact, it would suffice to compute it for the two cases when $M=\emptyset$, and when all the $\ell_j$ are contained among the $\mathit{m}_j$):

So assume that all such integrals are known when $Q$ is any product of less that $\abs{S}$ linear forms. We then want to compute $\int_{\overline{\mathit{m}}_j=0,\,j\in M}\abs{Q}d\mu$, where $Q=\prod_{i\in S}\ell_i$

As a first reduction, note that we may assume that $$K:=\bigcap_{i\in S}\ker\ell_i\cap\bigcap_{j\in M}\ker\mathit{m}_j$$ is equal to $0$. For let $K'$ be a linear complement of $K$. Let $\mathcal{Y}=\bbA_K$ and $\mathcal{Z}=\bbA_{K'}$. Then, by separation of variables, Theorem \ref{MC-15},
$$\int_{\overline{\mathit{m}}_j=0,\,j\in M}\abs{Q}d\mu_{\SSDVR}=\int_{\mathcal{Y}_\infty}d\mu_{\mathcal{Y}}\int_{\overline{\mathit{m}}_j=0,\,j\in M}\abs{Q}d\mu_{\mathcal{Z}}=\int_{\overline{\mathit{m}}_j=0,\,j\in M}\abs{Q}d\mu_{\mathcal{Z}}.$$

Hence, after a linear change of variables, Proposition \ref{MC-19}, we may assume that all the elements of the dual basis is contained among the $\ell_i$ and $\mathit{m}_j$, i.e., $\ell_i=x_i$ for $i=1\dotsk n'$ and $\mathit{m}_j=x_{n'+j}$ for $j=1\dotsk n-n'$. For this to work, we need the Jacobian to be invertible in $\bbZp$, which it is for $p$ sufficiently large.

For $T\subset S$, write $Q_T:=\prod_{i\in T}\ell_i$. Note that, for $T\subsetneq S$, the integral 
$$I_T:=\int_{\substack{\overline{\ell}_i=0,\,i\in T\\ \overline{\ell}_i\neq0,\,i\in S\setminus T}}\abs{Q_T}d\mu$$
is known by induction. For we may eliminate the $\overline{\ell}_i\neq0$ conditions in the following way: Choose $t\in S\setminus T$. Then 
$$I_T+\int_{\substack{\overline{\ell}_i=0,\,i\in T\\ \overline{\ell}_i\neq0,\,i\in S\setminus (T\setminus\{ t\})\\ \overline{\ell}_t=0}}\abs{Q_T}d\mu=\int_{\substack{\overline{\ell}_i=0,\,i\in T\\ \overline{\ell}_i\neq0,\,i\in S\setminus (T\setminus \{t\})}}\abs{Q_T}d\mu,$$
so inductively, $I_T$ may be expressed as an alternating sum of
$$\int_{\substack{\overline{\ell}_i=0,\,i\in T\\ \overline{\ell}_i= 0,\,i\in T'}}\abs{Q_T}d\mu,$$
for different $T'\subset S\setminus T$.

We now compute $\int_{\SSDVR^n_\infty}\abs{Q}d\mu$ in terms of things that are already known by induction:
$$\int_{\SSDVR^n_\infty}\abs{Q}d\mu=\int_{\overline{\ell}_i=0,\,i\in S}\abs{Q}d\mu+\sum_{T\subsetneq S}\int_{\substack{\overline{\ell}_i=0,\,i\in T\\ \overline{\ell}_i\neq0,\,i\in S\setminus T}}\abs{Q_T}d\mu.$$
Since we may assume that $\ell_i=x_i$ for $i=1\dotsk n$, we have $\{\overline{\ell}_i=0\}_{i\in S}=\{\overline{x}_i=0\}_{i=1}^n$. Hence first term of the sum is, by Theorem \ref{103}, equal to $\bbL^{-n-s}\int_{\SSDVR^n_\infty}\abs{Q}d\mu$. The rest of the terms (after the summation sign), is already known by induction. Denote this second sum with $\Sigma$. It then follows that $\int_{\SSDVR_\infty}\abs{Q}d\mu=(1-\bbL^{-s-n})^{-1}\Sigma$. (That $1-\bbL^{-s-n}$ is invertible follows from Proposition \ref{MC-34}.)

Consider now an arbitrary integral:
$$\int_{\overline{\mathit{m}}_j=0,\,j\in M}\abs{Q}d\mu=\int_{\substack{\overline{\ell}_i=0,\,i\in S\\ \overline{\mathit{m}}_j=0,\,j\in M}}\abs{Q}d\mu+\sum_{T\subsetneq S}\int_{\substack{\overline{\ell}_i=0,\,i\in T\\ \overline{\ell}_i\neq0,\,i\in S\setminus T  \\ \overline{\mathit{m}}_j=0,\,j\in M}}\abs{Q_T}d\mu$$ 
The terms after the summation sign is again taken care of by the induction assumption. The first term is, because of Theorem \ref{103}, equal to $\bbL^{-n-s}\int_{\SSDVR^n_\infty}\abs{Q}d\mu$. By the first part of the induction step, this is already known. (It would suffice to compute this integral in the case when all $\ell_i$ are contained among the $\mathit{m}_i$, and in this case we get $\int_{\overline{\mathit{m}}_j=0,\,j\in M}\abs{Q}d\mu=\bbL^{-n-s}\int_{\SSDVR^n_\infty}\abs{Q}d\mu$.)
\end{proof}

\appendix
\section{The Witt vectors}\label{MC7}
In this section we give the basic definitions in connection with the Witt vectors, $\W$. This material is essentially in \cite{MR554237} pp. 40-44 and in \cite{MR0344261}.

\subsection{Definitions}

Fix a prime $p$. For every $n\in\bbN$, define the polynomial $W_n=\sum_{i=0}^np^iX_i^{p^{n-i}}\in\bb{Z}[X]$. The ring of Witt vectors with coefficients in the commutative ring $A$, $\W(A)$, is by definition $A^\bb{N}$ with the ring operations defined by requiring that the map
\begin{align*}
W_*(A):\bff{W}(A)&\rightarrow A^\bb{N}\\
\bff{a}&\mapsto\bigl(W_0(\bff{a}),\dots,W_n(\bff{a}),\dots\bigr)
\end{align*}
should be a homomorphism. $\W(A)$ is a commutative ring with identity element $(1,0,0,\dots)$. The ring scheme of Witt vectors is the functor $\fun{\W}{\catring}{\catring}$ that takes the commutative ring $A$ to $\W(A)$. $W_*\colon\W\to\bb{A}_\bbZ^\bb{N}$ is then a morphism of ring schemes. If $p$ is invertible in $A$, $W_*(A)$ is an isomorphism, hence $\W_{\bbZ[1/p]}\simeq\bb{A}^{\bbN}_{\bbZ[1/p]}$ as ring schemes.

One defines the Witt vectors of length $n$, $\W_n$, to be the functor that takes the ring $A$ to the projection of $\W(A)$ onto its $n$ first coordinates. This scheme is of finite type over $\spec\bbZ$ . One has that $\W_1$ is the identity functor, that is $\bfW_1(A)=A$. We also have that the ring $\W(A)$ is the inverse limit of the rings $\bfW_n(A)$ as $n\rightarrow\infty$. We define the projection map $\pi_n\colon\W\to\W_n$ by
$$(a_0,a_1,\dots)\mapsto(a_0\dotsk a_{n-1})\colon\W(A)\to\W_n(A)$$
for every ring $A$.

If $A$ is a perfect ring of characteristic $p$ (meaning that $x\mapsto x^p$ is surjective) then $p$ is not a zero-divisor in $\W(A)$, which is Hausdorff and complete with respect to the filtration $\{p^n\W(A)\}_{n\in\bbN}$. Moreover, the residue ring of $\W(A)$ is $A$. In particular, $\bfW(\bbFp)=\bbZp$ and if $q=p^n$ then $\bfW(\bbFq)$ is the integral closure of $\bbZp$ in the unique unramified degree $n$ extension of $\bbQp$ (in a fixed algebraic closure of $\bbQp$).

\subsection{Operations on $\W$}

Define $\fun{\V}{\W}{\W}$ by $\V\bff{a}=(0,a_0,\dots,a_{n-1},\dots)$. $\V$ is short for "Verschiebung". It is not a morphism of ring schemes but it is additive. Note that $\W(A)/\V^n\W(A)\simeq\W_n(A)$ for every ring $A$.

Next we define the map $\fun{\R}{\bfW_1}{\bfW}$ by $a\mapsto(a,0,\dots,0,\dots)$. The map $\R$ is multiplicative. Moreover, for any $\bff{a}=(a_0,a_1,\dotsk)\in\W(A)$ we have
\begin{equation}\label{41}
\bff{a}=\sum_{i=0}^\infty\V^i\R(a_i).
\end{equation}
When $A$ is perfect of characteristic $p$, $\R(A)$ is the unique system of multiplicative representatives of $A$ in $\W(A)$. In particular, $\R(\bbFp)$ is the subset of $\bbZp$ consisting of $0$ and the $(p-1)$st roots of unity.

Finally, over $\bbFp$ (where $p$ is the prime that was fixed in the beginning of this section) we define the Frobenius morphism $\fun{\F}{\W_{\bbFp}}{\W_{\bbFp}}$ by $\F\bff{a}=(a_0^p,\dots,a_n^p,\dots)$. It is a morphism of ring schemes.
\begin{proposition}\label{prop:40}
If $A$ is an $\bbFp$-algebra and $\bff{a},\bff{b}\in\W(A)$ the following formulas hold:
\begin{align*}
\V\F\bff{a}=&\F\V\bff{a}=p\bff{a}\\
\bff{a}\cdot\V\bff{b}=&\V(\F\bff{a}\cdot\bff{b}).
\end{align*}
\end{proposition}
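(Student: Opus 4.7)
The plan is to verify both identities using the ghost map $W_*\colon\W\to\bbA^{\bbN}$, a morphism of ring schemes over $\bbZ$ that becomes an isomorphism after inverting $p$. Consequently, on any $p$-torsion-free ring $R$---for instance the universal polynomial ring $R=\bbZ[X_0,X_1,\ldots,Y_0,Y_1,\ldots]$---the map $W_*\colon\W(R)\to R^{\bbN}$ is injective, so identities in $\W(R)$ may be checked coordinate-wise on ghosts and then specialized to any ring by functoriality. Two formulas from the definitions suffice: $W_0(\V\bff{a})=0$ and $W_n(\V\bff{a})=pW_{n-1}(\bff{a})$ for $n\geq 1$, both immediate from $\V\bff{a}=(0,a_0,a_1,\ldots)$; and $W_n(\tilde\F\bff{a})=W_{n+1}(\bff{a})$, where $\tilde\F$ is the unique lift of $\F$ to a ring-scheme morphism $\W\to\W$ over $\bbZ$ (it exists and is unique because $W_*$ is an isomorphism after inverting $p$, and one checks from the explicit polynomials that it reduces mod $p$ to the component-wise $p$-th-power Frobenius $\F$ of the paper).

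For $\F\V\bff{a}=p\bff{a}$: working in $\W(R)$, for every $n\geq0$,
\begin{equation*}
W_n(\tilde\F\V\bff{a})=W_{n+1}(\V\bff{a})=pW_n(\bff{a})=W_n(p\bff{a}),
\end{equation*}
so ghost-injectivity gives the identity universally. Reducing mod $p$ gives $\F\V\bff{a}=p\bff{a}$ over any $\bbFp$-algebra. To deduce $\V\F\bff{a}=\F\V\bff{a}$ I argue directly in characteristic $p$: there $\F$ is the component-wise $p$-th power, so both $\V\F\bff{a}=\V(a_0^p,a_1^p,\ldots)$ and $\F\V\bff{a}=\F(0,a_0,a_1,\ldots)$ equal $(0,a_0^p,a_1^p,\ldots)$. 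Combined with the previous step this gives $\V\F\bff{a}=p\bff{a}$.

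For $\bff{a}\cdot\V\bff{b}=\V(\F\bff{a}\cdot\bff{b})$ the same ghost strategy works in $\W(R)$ with $\F$ replaced by $\tilde\F$. Both sides have zeroth ghost $0$, and for $n\geq 1$, using $W_{n-1}\circ\tilde\F=W_n$,
\begin{equation*}
W_n(\bff{a}\cdot\V\bff{b})=W_n(\bff{a})\cdot pW_{n-1}(\bff{b})=pW_{n-1}(\tilde\F\bff{a})\cdot W_{n-1}(\bff{b})=W_n(\V(\tilde\F\bff{a}\cdot\bff{b})).
\end{equation*}
Injectivity of the ghost map gives the identity in $\W(R)$, and reduction mod $p$ gives the stated formula over any $\bbFp$-algebra.

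The main obstacle is the zeroth ghost component in $\V\F\bff{a}=p\bff{a}$, which would demand $0=pa_0$---false over $\bbZ$, which is exactly why the characteristic-$p$ hypothesis is essential. The clean workaround is the two-step argument above: prove $\F\V=p$ universally via ghosts, then exploit that $\F$ is the component-wise $p$-th power in characteristic $p$ to swap $\F\V$ with $\V\F$.
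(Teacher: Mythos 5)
Your proposal is correct, but it takes a genuinely different route from the paper's. The paper simply cites its Witt-vector reference for the first formula $\V\F\bff{a}=\F\V\bff{a}=p\bff{a}$, and then deduces the second from the first: it reduces to the case of a perfect $\bbFp$-algebra (so that $\bff{b}=\F\bff{c}$ for some $\bff{c}$) and computes $\V(\F\bff{a}\cdot\F\bff{c})=\V\F(\bff{a}\cdot\bff{c})=p(\bff{a}\cdot\bff{c})=\bff{a}\cdot p\bff{c}=\bff{a}\cdot\V\bff{b}$. You instead verify both identities on ghost components over a $p$-torsion-free universal polynomial ring and then reduce mod $p$, supplemented by the direct observation that $\V\F=\F\V$ in characteristic $p$ because there $\F$ is the componentwise $p$-th power; your ghost computations ($W_0\circ\V=0$, $W_n\circ\V=pW_{n-1}$, and the defining shift property of the integral Frobenius) and the specialization argument are all sound. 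What your route buys is a uniform and essentially self-contained derivation of both formulas, whereas the paper's first formula is a citation; what it costs is reliance on two standard but not-quite-free facts about the integral Witt Frobenius: its existence over $\bbZ$ (integrality of the polynomials $F_n$ does \emph{not} follow from $W_*$ being an isomorphism after inverting $p$ — that gives existence and uniqueness only over $\bbZ[1/p]$; integrality needs Dwork's criterion or the classical congruence argument) and the congruence $F_n\equiv X_n^p\pmod p$, which is exactly the statement that the lift reduces to the componentwise Frobenius the paper uses. Both facts are in the standard references the paper itself cites, so your argument stands, but those two points deserve a citation or a short induction rather than the parenthetical ``one checks.''
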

\begin{proof}
For the first formula see \cite{MR554237}. For the second formula it suffices to prove this when $A$ is perfect so we may assume that $\bff{b}=\F\bff{c}$. The first formula, the distributive law and the fact that $\F$ is a ring homomorphism then give
\begin{equation*}
\V(\F\bff{a}\cdot\bff{b})=\V(\F\bff{a}\cdot\F\bff{c})=\V\F(\bff{a}\cdot\bff{c})=p(\bff{a}\cdot\bff{c})=\bff{a}\cdot(p\bff{c})=\bff{a}\cdot\V\F\bff{c}=\bff{a}\cdot\V\bff{b}.\qedhere
\end{equation*}
\end{proof}
It follows that if $A$ is an $\bbFp$-algebra, $\bff{a},\bff{b}\in\W(A)$ and $i,j\in\bbN$ then
\begin{equation}\label{137}
\V^i\bff{a}\cdot\V^j\bff{b}=\V^{i+j}\bigl(\F^j\bff{a}\cdot\F^i\bff{b}\bigr).
\end{equation}
We need the following consequence of the the proposition:
\begin{corollary}\label{co:45}
Let $\f$ be a perfect $\bbFp$-algebra, let $A$ be a $\f$-algebra and let $\Delta\in\W(\f)[X_1,\dots,X_n]$ be a form of degree $d$. If $\bff{a}_1,\dots,\bff{a}_n\in\W(A)$ then
\begin{equation*}
\Delta(\V\bff{a}_1,\dots,\V\bff{a}_n)=\F^{d-1}\V^d(\F\Delta)(\bff{a}_1,\dots,\bff{a}_n).
\end{equation*}
In particular, if $\Delta\in\bbZp[X_1,\dots,X_n]$ then
\begin{equation*}
\Delta(\V\bff{a}_1,\dots,\V\bff{a}_n)=\F^{d-1}\V^d\Delta(\bff{a}_1,\dots,\bff{a}_n).
\end{equation*}
\end{corollary}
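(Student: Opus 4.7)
The plan is to reduce to a single monomial and then sum, using the second identity of Proposition \ref{prop:40} and its iterated consequence \eqref{137}. First I will establish by induction on $k\geq1$ the auxiliary identity
$$\V\bff{b}_1\cdot\V\bff{b}_2\cdots\V\bff{b}_k=\F^{k-1}\V^{k}(\bff{b}_1\bff{b}_2\cdots\bff{b}_k)\in\W(A).$$
The base $k=1$ is trivial. For the inductive step, write $\F^{k-1}\V^k(\bff{b}_1\cdots\bff{b}_k)\cdot\V\bff{b}_{k+1}$ as $\V^k\bigl(\F^{k-1}(\bff{b}_1\cdots\bff{b}_k)\bigr)\cdot\V\bff{b}_{k+1}$ (using that $\F$ and $\V$ commute, both compositions equaling $p$), apply \eqref{137} with $i=k$, $j=1$, and use that $\F$ is a ring homomorphism to collapse the Frobenius exponents.

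Next I will treat a single monomial $cX_1^{e_1}\cdots X_n^{e_n}$ of degree $d=e_1+\cdots+e_n$. Specializing the auxiliary identity to $k=d$, with the arguments $\V\bff{a}_i$ each repeated $e_i$ times, yields
$$(\V\bff{a}_1)^{e_1}\cdots(\V\bff{a}_n)^{e_n}=\F^{d-1}\V^{d}(\bff{a}_1^{e_1}\cdots\bff{a}_n^{e_n}).$$
To absorb the coefficient $c\in\W(\f)$ I will iterate Proposition \ref{prop:40}'s identity $\bff{c}\cdot\V\bff{b}=\V(\F\bff{c}\cdot\bff{b})$, giving $c\cdot\V^d\bff{b}=\V^d(\F^{d}c\cdot\bff{b})$; combining this with the commutation $\F^{d-1}\V^d=\V^d\F^{d-1}$ and the ring-homomorphism property $\F^{d-1}(\F c\cdot x)=\F^d c\cdot\F^{d-1}x$ produces the key monomial formula
$$c(\V\bff{a}_1)^{e_1}\cdots(\V\bff{a}_n)^{e_n}=\F^{d-1}\V^{d}\bigl(\F c\cdot\bff{a}_1^{e_1}\cdots\bff{a}_n^{e_n}\bigr).$$

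Since $\Delta$ is a form of degree $d$, every monomial has the same degree, and summing the previous identity over the monomials of $\Delta$ and using additivity of $\F$ and $\V$ together with $\F$ respecting sums and products gives
$$\Delta(\V\bff{a}_1,\dots,\V\bff{a}_n)=\F^{d-1}\V^{d}\bigl((\F\Delta)(\bff{a}_1,\dots,\bff{a}_n)\bigr),$$
which is the general statement. For the "in particular" clause, I note that if the coefficients of $\Delta$ lie in $\bbZp=\W(\bbFp)\subset\W(\f)$, then the Frobenius acts on them coordinatewise by $a\mapsto a^p$ on $\bbFp$, which is the identity, so $\F\Delta=\Delta$.

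The only real obstacle is bookkeeping — making sure that the exponents of $\F$ and $\V$ produced by the repeated applications of \eqref{137} collapse to precisely $\F^{d-1}\V^d$, and that the absorption of $c$ produces exactly one $\F$ rather than some other power. Both issues dissolve once one uses $\F\V=\V\F$ and that $\F$ is a ring homomorphism.
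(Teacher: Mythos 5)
Your proof is correct and takes essentially the same route as the paper: reduce to a single monomial and iterate the identity $\bff{a}\cdot\V\bff{b}=\V(\F\bff{a}\cdot\bff{b})$ of Proposition \ref{prop:40} (in the form \eqref{137}) to collapse a product of $d$ factors of $\V$ into $\F^{d-1}\V^{d}$. The only divergence is in absorbing the coefficient: the paper uses perfectness of $\f$ to write $c_\alpha=\F^{d-1}c'_\alpha$ and then factors $\F^{d-1}$ out, whereas you push $c$ through $\V^{d}$ directly via $c\cdot\V^{d}\bff{b}=\V^{d}(\F^{d}c\cdot\bff{b})$ and the commutation $\F\V=\V\F$, which is equally valid and does not need the perfectness hypothesis at that step.
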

\begin{proof}
Let $\Delta=X_1^d$. The formula is true for $d=1$. Suppose that it is true for $d-1$. Then with the help of Corollary \ref{137},
\begin{align*}
\Delta(\V\bff{a})=&(\V\bff{a})(\V\bff{a})^{d-1}\\
=&(\V\bff{a})(\F^{d-2}\V^{d-1}\bff{a}^{d-1})\\
=&\V^{d}(\F^{d-1}\bff{a}\cdot\F^{d-1}\bff{a}^{d-1})\\
=&\F^{d-1}\V^d\Delta(\bff{a}).
\end{align*}

Next, let $d$ and $n$ be arbitrary and suppose the formula is proved for every $X_1^{d_1}\cdots X_{n-1}^{d_{n-1}}$ with $d_1+\dots+d_{n-1}\leq d$. Let $\Delta=X_1^{d_1}\cdots X_{n}^{d_{n}}$ with $d_1+\dots+d_n=d$. Then
\begin{align*}
\Delta(\V\bff{a}_1\dotsk\V\bff{a}_n)=&(\V\bff{a}_1)^{d_1}\prod_{i=2}^n(\V\bff{a}_i)^{d_i}\\
=&\F^{d_1-1}\V^{d_1}\bff{a}_1^{d_1}\cdot\F^{d-d_1-1}\V^{d-d_1}\prod_{i=2}^n\bff{a}_i^{d_i}.
\end{align*}
Since $\F$ and $\V$ commute we can use Corollary \ref{137} on this expression to get
$$\V^d\bigg(\F^{d-1}\bff{a}_1^{d_1}\cdot\F^{d-1}\prod_{i=2}^n\bff{a}_i^{d_i}\biggr)$$
and because $\F$ is a homomorphism this equals $\F^{d-1}\V^d\Delta(\bff{a}_1\dotsk\bff{a}_n)$.

Finally, let $\Delta\in\W(\f)[X_1\dotsk X_n]$ be an arbitrary form of degree $d$: $\Delta=\sum_{\alpha\in I}c_{\alpha}\Delta_\alpha$, where the $\Delta_\alpha$ are monomials of degree $d$ and $c_\alpha\in\W(\f)$. Since $\f$ is perfect we may, for every $\alpha\in I$, chose $c'_\alpha$ such that $\F^{d-1}c'_\alpha=c_\alpha$. We then have
\begin{align*}
\Delta(\V\bff{a}_1\dotsk\V\bff{a}_n)=&\sum_{\alpha\in I}\F^{d-1}c'_\alpha\F^{d-1}V^d\Delta_\alpha(\bff{a}_1\dotsk\bff{a}_n)\\
=&\F^{d-1}\sum_{\alpha\in I}\V^d(\F^dc'_\alpha\cdot\Delta_\alpha)\\
=&\F^{d-1}\V^d\sum_{\alpha\in I}\F c_\alpha\cdot\Delta_\alpha\\
=&\F^{d-1}\V^d(\F\Delta)(\bff{a}_1,\dots,\bff{a}_n).
\end{align*}
In particular, if $c_\alpha\in\bbZp=\W(\bbFp)$ then $\F c_\alpha=c_\alpha$, hence $\F\Delta=\Delta$.
\end{proof}

\bibliography{MC-bibliography}
\bibliographystyle{amsalpha}

\end{document}